\title[Arithmetic potentialism]{The modal logic of arithmetic potentialism and the universal algorithm}
\author{Joel David Hamkins}
\address[Joel David Hamkins]
{O'Hara Professor of Logic, University of Notre Dame, 100 Malloy Hall, Notre Dame, IN 46556 USA}
\email{jdhamkins@nd.edu}
\urladdr{http://jdh.hamkins.org}
\thanks{Thanks to Rasmus Blanck, Ali Enayat, Victoria Gitman, Roman Kossak, Volodya Shavrukov, and Albert Visser for insightful comments and helpful discussions.
% Commentary concerning this article can be made at \href{http://jdh.hamkins.org/arithmetic-potentialism-and-the-universal-algorithm}{http://jdh.hamkins.org/arithmetic-potentialism-and-the-universal-algorithm}.
}
\newtheorem{theorem}{Theorem}
\newtheorem*{theorem*}{Theorem}
\newtheorem*{maintheorem*}{Main Theorem}
\newtheorem*{maintheorems*}{Main Theorems}
\newtheorem{corollary}[theorem]{Corollary}
\newtheorem*{corollary*}{Corollary}
\newtheorem*{corollaries*}{Corollaries}
\newtheorem{lemma}[theorem]{Lemma}
\newtheorem{observation}[theorem]{Observation}
\theoremstyle{definition}
\newtheorem*{definition*}{Definition}
\newtheorem{question}[theorem]{Question}
\newtheorem*{question*}{Question}
\newtheorem*{questions*}{Questions}
\newtheorem*{mainquestion*}{Main Question} % without numbering
\newtheorem*{openquestion*}{Open Question} % without numbering
\theoremstyle{remark}
\theoremstyle{plain}% in case new theorems are declared in main.
\newcommand{\QED}{\end{proof}}
\def\proclaim[#1]{{\bf #1}}
\def\BF#1.{{\bf #1.}}
\def\says#1:#2\par{\item[#1] #2\par}
\newcommand{\Godel}{G\"odel}
\newcommand{\Lowe}{L\"owe}
\newcommand{\Oystein}{{\O}ystein}
\newcommand{\N}{{\mathbb N}}
\newcommand{\dotminus}{\mathbin{\text{\@dotminus}}}
\newcommand{\@dotminus}{%
  \ooalign{\hidewidth\raise1ex\hbox{.}\hidewidth\cr$\m@th-$\cr}%
}
\newcommand{\of}{\subseteq}
\newcommand{\ofneq}{\subsetneq}
\newcommand{\set}[1]{\{\,{#1}\,\}}
\newcommand{\elesub}{\prec}
\newcommand{\eleequiv}{\equiv}
\newcommand{\tp}{\mathop{\rm tp}}
\newcommand{\SSy}{\mathop{\rm SSy}}
\newcommand{\Con}{\mathop{{\rm Con}}}
\newcommand{\Mod}{\mathop{{\rm Mod}}}
\newcommand{\satisfies}{\models}
\newcommand{\proves}{\vdash}
\DeclareMathOperator{\possible}{\text{\tikz[scale=.6ex/1cm,baseline=-.6ex,rotate=45,line width=.1ex]{\draw (-1,-1) rectangle (1,1);}}}
\DeclareMathOperator{\necessary}{\text{\tikz[scale=.6ex/1cm,baseline=-.6ex,line width=.1ex]{\draw (-1,-1) rectangle (1,1);}}}
\DeclareMathOperator{\uppossible}{\text{\tikz[scale=.6ex/1cm,baseline=-.6ex,rotate=45,line width=.1ex]{
                            \draw (-1,-1) rectangle (1,1); \draw[very thin] (-1,.6) -- (.6,.6) -- (.6,-1);}}}
\DeclareMathOperator{\upnecessary}{\text{\tikz[scale=.6ex/1cm,baseline=-.6ex,line width=.1ex]{
                            \draw (-1,-1) rectangle (1,1); \draw[very thin] (-1,.6) -- (.6,.6) -- (.6,-1);}}}
\DeclareMathOperator{\xpossible}{\text{\tikz[scale=.6ex/1cm,baseline=-.6ex,rotate=45,line width=.1ex]{
                            \draw (-1,-1) rectangle (1,1); \draw[very thin] (-.6,-.6) rectangle (.6,.6);}}}
\DeclareMathOperator{\xnecessary}{\text{\tikz[scale=.6ex/1cm,baseline=-.6ex,line width=.1ex]{
                            \draw (-1,-1) rectangle (1,1); \draw[very thin] (-.6,-.6) rectangle (.6,.6);}}}
\DeclareMathOperator{\soliduppossible}{\text{\tikz[scale=.6ex/1cm,baseline=-.6ex,rotate=45,line width=.1ex]{
                            \draw (-1,-1) rectangle (1,1); \draw[very thin,fill=gray,fill opacity=.25] (-1,-1) rectangle (.6,.6);}}}
\DeclareMathOperator{\solidupnecessary}{\text{\tikz[scale=.6ex/1cm,baseline=-.6ex,line width=.1ex]{
                            \draw (-1,-1) rectangle (1,1); \draw[very thin,fill=gray,fill opacity=.25] (-1,-1) rectangle (.6,.6);}}}
\DeclareMathOperator{\solidxpossible}{\text{\tikz[scale=.6ex/1cm,baseline=-.6ex,rotate=45,line width=.1ex]{
                            \draw (-1,-1) rectangle (1,1); \draw[very thin,fill=gray,fill opacity=.25] (-.6,-.6) rectangle (.6,.6);}}}
\DeclareMathOperator{\solidxnecessary}{\text{\tikz[scale=.6ex/1cm,baseline=-.6ex,line width=.1ex]{
                            \draw (-1,-1) rectangle (1,1); \draw[very thin,fill=gray,fill opacity=.25] (-.6,-.6) rectangle (.6,.6);}}}
\DeclareMathOperator{\consuppossible}{\text{\tikz[scale=.6ex/1cm,baseline=-.6ex,rotate=45,line width=.1ex]{
                            \draw (-1,-1) rectangle (1,1); \draw[very thin] (-1,-1) rectangle (.6,.6);
                            \clip (-1,-1) rectangle (.6,.6); %\draw[fill] (.3,.3) rectangle (.6,.6);
                            \draw[very thin] (-1,-1) -- (.6,.6);}}}
\DeclareMathOperator{\consupnecessary}{\text{\tikz[scale=.6ex/1cm,baseline=-.6ex,line width=.1ex]{
                            \draw (-1,-1) rectangle (1,1); \draw[very thin] (-1,-1) rectangle (.6,.6);
                            \clip (-1,-1) rectangle (.6,.6); %\draw[fill] (.3,.3) rectangle (.6,.6);
                            \draw[very thin] (-1,-1) -- (.6,.6);}}}
\DeclareMathOperator{\consxpossible}{\text{\tikz[scale=.6ex/1cm,baseline=-.6ex,rotate=45,line width=.1ex]{
                            \draw (-1,-1) rectangle (1,1); \draw[very thin] (-.6,-.6) rectangle (.6,.6);
                            \clip (-1,-1) rectangle (.6,.6); %\draw[fill] (.3,.3) rectangle (.6,.6);
                            \draw[very thin] (-.6,-.6) -- (.6,.6);}}}
\DeclareMathOperator{\consxnecessary}{\text{\tikz[scale=.6ex/1cm,baseline=-.6ex,line width=.1ex]{
                            \draw (-1,-1) rectangle (1,1); \draw[very thin] (-.6,-.6) rectangle (.6,.6);
                            \clip (-.6,-.6) rectangle (.6,.6); %\draw[fill] (.3,.3) rectangle (.6,.6);
                            \draw[very thin] (-.6,-.6) -- (.6,.6);}}}
\DeclareMathOperator{\solidconsuppossible}{\text{\tikz[scale=.6ex/1cm,baseline=-.6ex,rotate=45,line width=.1ex]{
                            \draw (-1,-1) rectangle (1,1); \draw[very thin,fill=gray,fill opacity=.25] (-1,-1) rectangle (.6,.6);
                            \draw[very thin] (-1,-1) rectangle (.6,.6);
                            \clip (-1,-1) rectangle (.6,.6); %\draw[fill] (.3,.3) rectangle (.6,.6);
                            \draw[very thin] (-1,-1) -- (.6,.6);}}}
\DeclareMathOperator{\solidconsupnecessary}{\text{\tikz[scale=.6ex/1cm,baseline=-.6ex,line width=.1ex]{
                            \draw (-1,-1) rectangle (1,1); \draw[very thin,fill=gray,fill opacity=.25] (-1,-1) rectangle (.6,.6);
                            \draw[very thin] (-1,-1) rectangle (.6,.6);
                            \clip (-1,-1) rectangle (.6,.6); %\draw[fill] (.3,.3) rectangle (.6,.6);
                            \draw[very thin] (-1,-1) -- (.6,.6);}}}
\DeclareMathOperator{\solidconsxpossible}{\text{\tikz[scale=.6ex/1cm,baseline=-.6ex,rotate=45,line width=.1ex]{
                            \draw (-1,-1) rectangle (1,1); \draw[very thin,fill=gray,fill opacity=.25] (-.6,-.6) rectangle (.6,.6);
                            \draw[very thin] (-.6,-.6) rectangle (.6,.6);
                            \clip (-1,-1) rectangle (.6,.6); %\draw[fill] (.3,.3) rectangle (.6,.6);
                            \draw[very thin] (-.6,-.6) -- (.6,.6);}}}
\DeclareMathOperator{\solidconsxnecessary}{\text{\tikz[scale=.6ex/1cm,baseline=-.6ex,line width=.1ex]{
                            \draw (-1,-1) rectangle (1,1); \draw[very thin,fill=gray,fill opacity=.25] (-.6,-.6) rectangle (.6,.6);
                            \draw[very thin] (-.6,-.6) rectangle (.6,.6);
                            \clip (-1,-1) rectangle (.6,.6); %\draw[fill] (.3,.3) rectangle (.6,.6);
                            \draw[very thin] (-.6,-.6) -- (.6,.6);}}}
\DeclareMathOperator{\ssypossible}{\text{\tikz[scale=.6ex/1cm,baseline=-.6ex,rotate=45,line width=.1ex]{
                            \draw (-1,-1) rectangle (1,1); \draw[very thin] (-1,-1) rectangle (.6,.6);
                            \draw[very thin] (-.6,-.6) rectangle (.2,.2);
                            }}}
\DeclareMathOperator{\ssynecessary}{\text{\tikz[scale=.6ex/1cm,baseline=-.6ex,line width=.1ex]{
                            \draw (-1,-1) rectangle (1,1); \draw[very thin] (-1,-1) rectangle (.6,.6);
                            \draw[very thin] (-.6,-.6) rectangle (.2,.2);
                            }}}
\DeclareMathOperator{\solidssypossible}{\text{\tikz[scale=.6ex/1cm,baseline=-.6ex,rotate=45,line width=.1ex]{
                            \draw (-1,-1) rectangle (1,1); \draw[very thin] (-1,-1) rectangle (.6,.6);
                            \draw[very thin,fill=gray,fill opacity=.25] (-.6,-.6) rectangle (.2,.2);
                            }}}
\DeclareMathOperator{\solidssynecessary}{\text{\tikz[scale=.6ex/1cm,baseline=-.6ex,line width=.1ex]{
                            \draw (-1,-1) rectangle (1,1); \draw[very thin] (-1,-1) rectangle (.6,.6);
                            \draw[very thin,fill=gray,fill opacity=.25] (-.6,-.6) rectangle (.2,.2);
                            }}}
\DeclareMathOperator{\ipossible}{\text{\tikz[scale=.6ex/1cm,baseline=-.6ex,rotate=45,line width=.1ex]{
                            \draw (-1,-1) rectangle (1,1); \draw[very thin] (-1,-1) rectangle (.6,.6);
                            \draw[very thin] (.2,-1) arc (0:90:1.2);
                            }}}
\DeclareMathOperator{\inecessary}{\text{\tikz[scale=.6ex/1cm,baseline=-.6ex,line width=.1ex]{
                            \draw (-1,-1) rectangle (1,1); \draw[very thin] (-1,-1) rectangle (.6,.6);
                            \draw[very thin] (.2,-1) arc (0:90:1.2);
                            }}}
\DeclareMathOperator{\solidipossible}{\text{\tikz[scale=.6ex/1cm,baseline=-.6ex,rotate=45,line width=.1ex]{
                            \draw (-1,-1) rectangle (1,1); \draw[very thin] (-1,-1) rectangle (.6,.6);
                            \draw[very thin,fill=gray,fill opacity=.25] (-1,-1) -- (.2,-1) arc (0:90:1.2) -- cycle;
                            }}}
\DeclareMathOperator{\solidinecessary}{\text{\tikz[scale=.6ex/1cm,baseline=-.6ex,line width=.1ex]{
                            \draw (-1,-1) rectangle (1,1); \draw[very thin] (-1,-1) rectangle (.6,.6);
                            \draw[very thin,fill=gray,fill opacity=.25] (-1,-1) -- (.2,-1) arc (0:90:1.2) -- cycle ;
                            }}}
\newcommand\dbrace{\hskip-1.5em\raise3pt\hbox{\rotatebox[origin=c]{-35}{$\left.\strut^{\phantom{|}}\right\}$}}}% useful for tetration
\newcommand\UParroW{{\setbox0\hbox{$\Uparrow$}\rlap{\hbox to \wd0{\hss$\mid$\hss}}\box0}}
\newcommand{\theoryf}[1]{{\rm #1}}% {\hbox{$\mathsf{#1}$}}
\newcommand{\concat}{\mathbin{{}^\smallfrown}}
\newcommand{\converges}{{\downarrow}}
\renewcommand{\setminus}{\raise.3ex\hbox{\rotatebox{-20}{$-$}}} % the usual setminus is absurdly huge and vertical
\newcommand{\intersect}{\cap}
\newcommand{\smalllt}{\mathrel{\mathchoice{\raise2pt\hbox{$\scriptstyle<$}}{\raise1pt\hbox{$\scriptstyle<$}}{\raise0pt\hbox{$\scriptscriptstyle<$}}{\scriptscriptstyle<}}}
\newcommand{\smallleq}{\mathrel{\mathchoice{\raise2pt\hbox{$\scriptstyle\leq$}}{\raise1pt\hbox{$\scriptstyle\leq$}}{\raise1pt\hbox{$\scriptscriptstyle\leq$}}{\scriptscriptstyle\leq}}}
   \def\DHLhksqrt#1#2{%
   \setbox0=\hbox{$#1\sqrt{#2\,}$}\dimen0=\ht0
   \advance\dimen0-0.2\ht0
   \setbox2=\hbox{\vrule height\ht0 depth -\dimen0}%
   {\box0\lower0.4pt\box2}}
\def\[#1]{\mathopen{\lbrack\!\lbrack}#1\mathclose{\rbrack\!\rbrack}}
\newbox\gnBoxA
\newbox\gnBoxB
\newdimen\gnCornerHgt
\newdimen\gnArgHgt
\def\gcode #1{%
\setbox\gnBoxA=\hbox{$#1$}%
\setbox\gnBoxB=\hbox{$\bar #1$}%
\gnArgHgt=\ht\gnBoxB%
\ifnum     \gnArgHgt<\gnCornerHgt \gnArgHgt=0pt%
\else \advance \gnArgHgt by -\gnCornerHgt%
\fi \raise\gnArgHgt\hbox{\tiny$\ulcorner$} \box\gnBoxA %
\raise\gnArgHgt\hbox{\tiny$\urcorner$}}
\newcommand{\UnderTilde}[1]{{\setbox1=\hbox{$#1$}\baselineskip=0pt\vtop{\hbox{$#1$}\hbox to\wd1{\hfil$\sim$\hfil}}}{}}
\newcommand{\Undertilde}[1]{{\setbox1=\hbox{$#1$}\baselineskip=0pt\vtop{\hbox{$#1$}\hbox to\wd1{\hfil$\scriptstyle\sim$\hfil}}}{}}
\newcommand{\undertilde}[1]{{\setbox1=\hbox{$#1$}\baselineskip=0pt\vtop{\hbox{$#1$}\hbox to\wd1{\hfil$\scriptscriptstyle\sim$\hfil}}}{}}
\newcommand{\UnderdTilde}[1]{{\setbox1=\hbox{$#1$}\baselineskip=0pt\vtop{\hbox{$#1$}\hbox to\wd1{\hfil$\approx$\hfil}}}{}}
\newcommand{\Underdtilde}[1]{{\setbox1=\hbox{$#1$}\baselineskip=0pt\vtop{\hbox{$#1$}\hbox to\wd1{\hfil\scriptsize$\approx$\hfil}}}{}}
\renewcommand{\implies}{\mathrel{\rightarrow}}
\renewcommand{\iff}{\mathrel{\leftrightarrow}}
\newcommand{\Iff}{\mathrel{\Longleftrightarrow}}
\def\<#1>{\left\langle#1\right\rangle}
\newcommand{\Tr}{\mathop{\rm Tr}\nolimits}
\newcommand{\ZFC}{{\rm ZFC}}
\newcommand{\PA}{{\rm PA}}
\newcommand{\cell}[1]{\boxit{\hbox to 17pt{\strut\hfil$#1$\hfil}}}
\newcommand{\head}[2]{\lower2pt\vbox{\hbox{\strut\footnotesize\it\hskip3pt#2}\boxit{\cell#1}}}
\newcommand{\boxit}[1]{\setbox4=\hbox{\kern2pt#1\kern2pt}\hbox{\vrule\vbox{\hrule\kern2pt\box4\kern2pt\hrule}\vrule}}
\newcommand{\Col}[3]{\hbox{\vbox{\baselineskip=0pt\parskip=0pt\cell#1\cell#2\cell#3}}}
\newcommand{\tapenames}{\raise 5pt\vbox to .7in{\hbox to .8in{\it\hfill input: \strut}\vfill\hbox to
.8in{\it\hfill scratch: \strut}\vfill\hbox to .8in{\it\hfill output: \strut}}}
\newcommand{\Head}[4]{\lower2pt\vbox{\hbox to25pt{\strut\footnotesize\it\hfill#4\hfill}\boxit{\Col#1#2#3}}}
\newcommand{\Dots}{\raise 5pt\vbox to .7in{\hbox{\ $\cdots$\strut}\vfill\hbox{\ $\cdots$\strut}\vfill\hbox{\
$\cdots$\strut}}}
\newcommand{\df}{\it} % use italic for definition terms. Idea: also use this to create an index of definitions, if MakeIndex is true.
\renewcommand{\UrlFont}{}%\sffamily\smaller} % makes url text smaller (used only in bibliography?)
\addcolon\nolinkurl{#1}}\iffieldundef{eprintclass}{}{\UrlFont{\mkbibbrackets{\thefield{eprintclass}}}}}
\addcolon\nolinkurl{#1}\iffieldundef{eprintclass}{}{\UrlFont{\mkbibbrackets{\thefield{eprintclass}}}}}}
\newcommand\Val{\mathord{\rm Val}}
\newcommand{\PAp}{\PA^{\scriptscriptstyle\!+}}
\newcommand{\setrandomcolor}{%
  \definecolor{randomcolor}{RGB}{\pdfuniformdeviate 256,\pdfuniformdeviate 256,\pdfuniformdeviate 256}%
  % use color with: \color{randomcolor}
}
\newcommand{\Larith}{\mathcal{L}}
\begin{document}

\begin{abstract}
I investigate the modal commitments of various conceptions of the philosophy of arithmetic potentialism. Specifically, I shall consider the potentialist conceptions arising from a model-theoretic view of the models of arithmetic as possible arithmetic realms of feasibility, considering them under their natural extension concepts, such as end-extensions, arbitrary extensions, conservative extensions and more, which in effect express distinct potentialist ideas. In these potentialist systems, I show, the propositional modal assertions that are valid with respect to all arithmetic assertions with parameters are exactly the assertions of \theoryf{S4}. With respect to sentences, however, the validities of a model lie between \theoryf{S4} and \theoryf{S5}, and these bounds are sharp in that there are models realizing both endpoints. For a model of arithmetic to validate \theoryf{S5} is precisely to fulfill the arithmetic maximality principle, which asserts that every possibly necessary statement is already true, and these models are equivalently characterized as those satisfying a maximal $\Sigma_1$ theory. The main \theoryf{S4} analysis makes fundamental use of the universal algorithm, of which this article provides a simplified, self-contained account. The main philosophical point is that fundamentally different potentialist conceptions---linear inevitability, convergent potentialism and radical branching possibility---are revealed by the precise modal validities of the corresponding potentialist systems in which those attitudes are expressed, and so it is important to discover them.
\end{abstract}

\maketitle

\section{Introduction}

The philosophy of potentialism, originating in antiquity in the classical dispute between actual and potential infinity, has recently enjoyed a resurgence of interest by philosophers. Current work emphasizes the modal nature of potentialism, finding the essence of potentialism to lie in the accompanying hierarchy of universe fragments, which provide a natural realm for modal analysis and assertions. Thus we separate the core idea of potentialism from its prior connection with infinity, for one can have potentialist conceptions of universe fragments, even when some of those fragments are already infinite and contain completed infinities. In short, according to the new perspective, potentialism is not necessarily about infinity at all, but rather about the idea of a realm of universe fragments with respect to a notion of extension.

\Oystein\ Linnebo and Stewart Shapiro~\cite{Linnebo:2013-PHS, LinneboShapiro2017:Actual-and-potential-infinity} emphasize this modal perspective on potentialism, and there is a growing literature. A vast spectrum of potentialist conceptions is emerging in diverse foundational domains---in set theory, in arithmetic, and in model theory generally for any kind of mathematical structure and any first-order theory. In \cite{HamkinsLinnebo2022:Modal-logic-of-set-theoretic-potentialism}, Linnebo and I explored the exact modal commitments of various kinds of convergent set-theoretic potentialism, including set-theoretic rank potentialism (true in all larger $V_\beta$); Grothendieck-Zermelo potentialism (true in all larger $V_\kappa$ for inaccessible cardinals $\kappa$); transitive-set potentialism (true in all larger transitive sets); countable-transitive-model potentialism (true in all larger countable transitive models of \ZFC); countable-model potentialism (true in all larger countable models of \ZFC); and others. My earlier work with Benedikt L\"owe and George Leibman on the modal logic of forcing \cite{HamkinsLoewe2008:TheModalLogicOfForcing, HamkinsLoewe2013:MovingUpAndDownInTheGenericMultiverse, HamkinsLeibmanLoewe2015:StructuralConnectionsForcingClassAndItsModalLogic} is now naturally seen as fundamentally potentialist in nature. W.~Hugh Woodin and I explored in \cite{HamkinsWoodin:The-universal-finite-set} the modal commitments of (nonconvergent) top-extensional set-theoretic potentialism, with further work \cite{HamkinsWilliams2021:The-universal-finite-sequence, Hamkins2024:Every-countable-model-of-arithmetic-or-set-theory-has-a-pointwise-definable-end-extension} on end-extensional set-theoretic potentialism. In \cite{HamkinsWoloszyn2024:Modal-model-theory}, Wojciech Wo\l oszyn and I develop a general theory of modal model theory, injecting modal vocabulary into model-theoretic investigations, with the class $\Mod(T)$ of all models of a given first-order theory $T$ as a principal motivating example, viewed as a potentialist system of possible worlds under the substructure relation. So we now have modal graph theory, modal group theory, modal field theory, and so forth, all fundamentally potentialist in nature. In his dissertation work, Wo\l oszyn has introduced a potentialist modal perspective to any concrete Kripke category, including cases with noninjective morphisms, which are therefore potentialist conceptions violating $x\neq y\implies\necessary x\neq y$.

For potentialist arithmetic, one now naturally views Woodin's work on the universal algorithm \cite{Woodin2011:A-potential-subtlety-concerning-the-distinction-between-determinism-and-nondeterminism} as potentialist in nature, as well as \cite{SavelievShapirovsky2016:On-modal-logic-of-submodels, SavelievShapirovsky2018:On-modal-logics-of-model-theoretic-relations} and \cite{Visser1998:An-overview-of-interpretability-logic}, focused on the modal validities of relative interpretability, including arithmetic extension modalities. In this article, I similarly find the exact modal commitments of various kinds of arithmetic potentialism.

The main lesson to be learned is that the differing potentialist conceptions often exhibit fundamentally different modal validities.
\begin{figure}[h]
\begin{tikzpicture}[scale=.4,xscale=.7,line join=bevel]
% set the nodes
\node (0) at (0,0) {};
\node (a) at (0,4) {};
\node (b) at (0,6) {};
\node (c) at (0,7.9) {};
\node (d) at (0,10) {};
% draw the universes
 \draw[fill=blue!67!red,fill opacity=.32,thin] (0.center) -- ([xshift=.8cm]a.center) -- node[below,opacity=1,scale=.7] {$M$} ([xshift=-.8cm]a.center) -- cycle;
 \draw[fill=blue!50!red,opacity=.7,fill opacity=.24,thin] ([xshift=.8cm]a.center) -- ([xshift=1.2cm]b.center) -- node[below,opacity=.7,scale=.7] {$M_1$} ([xshift=-1.2cm]b.center) -- ([xshift=-.8cm]a.center);
 \draw[fill=red!80!yellow,dash pattern={on 8pt off 1pt},opacity=.6,fill opacity=.16,thin] ([xshift=1.2cm]b.center) -- ([xshift=1.6cm]c.center) -- node[below,opacity=.6,scale=.7] {$M_2$} ([xshift=-1.6cm]c.center) -- ([xshift=-1.2cm]b.center);
 \draw[fill=yellow,dotted,opacity=.3,fill opacity=.08] ([xshift=1.6cm]c.center) -- ([xshift=2cm]d.center) -- node[below,opacity=.3,scale=.7] {$M_3$} ([xshift=-2cm]d.center) -- ([xshift=-1.6cm]c.center);
% caption
 \node[align=center] at (0,-1.5) {Linear\\ inevitability\\ \theoryf{S4.3}};
\begin{scope}[shift={(12,0)}]
% set the nodes
\node (0) at (0,0) {};
\node (a) at (-.8,4) {};
\node (b) at (.8,4) {};
\node (c) at (-3,7.7) {};
\node (d) at (-.5,7.7) {};
\node (e) at (.2,6.9) {};
\node (f) at (2.5,6.9) {};
% draw the universes
 \draw[fill=blue!67!red,fill opacity=.24,thin] (0.center) -- (a.center) -- node[below,opacity=1,scale=.7] {$M$} (b.center) -- cycle;
 \draw[fill=blue,opacity=.8,dash pattern={on 7pt off 1pt}, fill opacity=.08] (a.center) to[out=104,in=-60] (c.center) -- node[below,opacity=.8,scale=.6] {$M'$} (d.center) to[out=-85,in=76] (b.center);
% \draw[fill=blue,opacity=.8,dash pattern={on 8pt off 2pt}, fill opacity=.08] (0.center) -- (a.center) to[out=104,in=-60] (c.center) -- node[below,opacity=.8,scale=.6] {$M'$} (d.center) to[out=-85,in=76] (b.center) -- cycle;
 \draw[fill=red,opacity=.7,fill opacity=.08,thin] (a.center) to[out=104,in=-100] (e.center) -- node[below,opacity=.7,scale=.6] {$M''$} (f.center) to[out=-120,in=76] (b.center);
% \draw[fill=red,opacity=.6,fill opacity=.08,thin] (0.center) -- (a.center) to[out=104,in=-100] (e.center) -- node[below,opacity=.6,scale=.6] {$M''$} (f.center) to[out=-120,in=76] (b.center) -- cycle;
 \draw[fill=yellow,dotted,opacity=.4,fill opacity=.08] (0.center) to[out=102,in=-65] ([shift={(-1,1)}]c.center) -- node[below,opacity=.4,scale=.7] {$N$} ([shift={(4,1)}]d.center) to[out=-110,in=78] cycle;
% caption
 \node[align=center] at (0,-1.5)  {Directed\\ convergence\\ \theoryf{S4.2}};
\end{scope}
\begin{scope}[shift={(24,0)}]
% set the nodes
\node (0) at (0,0) {};
\node (a) at (-1,4) {};
\node (b) at (1,4) {};
\node (c) at (-4,7.1) {};
\node (d) at (-1,7.1) {};
\node (e) at (0,6.2) {};
\node (f) at (2.5,6.2) {};
\node (g) at (2.1,9.5) {};
\node (h) at (5.1,9.5) {};
\node (i) at (-2,10) {};
\node (j) at (1.5,10) {};
% draw the universes
 \draw[fill=blue,fill opacity=.16,thin] (0.center) -- (a.center) -- node[below,opacity=1,scale=.7] {$M$} (b.center) -- cycle;
 \draw[fill=blue,opacity=.9,fill opacity=.08] (0.center) -- (a.center) to[out=104,in=-60] (c.center) -- node[below,opacity=.9,scale=.6] {$M_0$} (d.center) to[out=-85,in=76] (b.center) -- cycle;
 \draw[fill=red!80!yellow,opacity=.8,fill opacity=.24,thin] (0.center) -- (a.center) to[out=104,in=-135] (e.center) -- node[below,opacity=.8,scale=.6] {$M_1$} (f.center) to[out=-135,in=76] (b.center) -- cycle;
 \draw[fill=red,opacity=.7,dash pattern={on 8pt off 1pt},fill opacity=.08] (e.center) to[out=45,in=-84] (g.center) -- node[below,opacity=.7,scale=.6] {$M_{11}$} (h.center) to[out=-125,in=45] (f.center);
% \draw[fill=red,opacity=.6,dash pattern={on 8pt off 2pt},fill opacity=.08] (0.center) -- (a.center) to[out=104,in=-135] (e.center) to[out=45,in=-84] (g.center) -- node[below,opacity=.6,scale=.6] {$N_{11}$} (h.center) to[out=-125,in=45] (f.center) to[out=-135,in=76] (b.center) -- cycle;
 \draw[fill=yellow,opacity=.5,dotted,fill opacity=.08] (e.center) to[out=45,in=-75] (i.center) -- node[below,opacity=.5,scale=.6] {$M_{10}$} (j.center) to[out=-104,in=45] (f.center);
% caption
 \node[align=center] at (0,-1.5)  {Radical\\ branching\\ \theoryf{S4}};
\end{scope}
\end{tikzpicture}
\caption{Differing potentialist conceptions}
\end{figure}
The Grothendieck-Zermelo set-theoretic potentialism analyzed in~\cite{HamkinsLinnebo2022:Modal-logic-of-set-theoretic-potentialism}, for example, with universe fragments being the inaccessible rank-initial segments $V_\kappa$ of the set-theoretic universe, validates exactly \theoryf{S4.3}; the forcing potentialism of~\cite{HamkinsLoewe2008:TheModalLogicOfForcing} validates exactly \theoryf{S4.2}; and the top-extensional set-theoretic potentialism analyzed in~\cite{HamkinsWoodin:The-universal-finite-set} validates exactly~\theoryf{S4}.

These different modal theories reflect the fundamentally different character of potentiality offered by these various potentialist systems. One generally has \theoryf{S4.3} in a potentialist system when the worlds are linearly ordered; \theoryf{S4.2} when they are convergent; and \theoryf{S4} when they have the character of a tree, with branching possibility. In each case, it is usually easy to verify that the given theory is valid; the far more difficult, subtle results are in showing for these systems that no modal assertions beyond these theories is valid. In short, lower bounds are cheap; upper bounds are difficult.

The broad philosophical point to be made here is that a satisfactory modal account of potentialism must now address these fundamentally different kinds of potentiality. It is no longer adequate merely to present a naive potentialist account of arithmetic potentialism, asserting perhaps that at any moment one has only some of the natural numbers but that one can always have more as time proceeds. Such an account misses what we now see as the key points of contention, such as the central dichotomy of convergence versus branching possibility. Are we to expect the universe as it unfolds to have a character of linear inevitability, where the numbers pile on in a unique coherent manner, converging to an ultimate limit model? Or shall we instead expect the unexpected, with the universe unfolding in a process of radical branching possibility? These notions of potentiality have fundamentally different characters. To my way of thinking, one of the important philosophical contributions made by the model-theoretic analysis is that the branching possibility potentialist systems, such as the ones considered in this paper, show how the radical branching potentialist perspective can exhibit a fundamental coherence and resilience.

In order to shed light on these fundamentally philosophical questions, we must therefore achieve a better grasp on the nature of the various potentialist conceptions. And so the project becomes in part mathematical. Indeed, this paper illustrates what I find to be an enjoyable common pattern of exchange between philosophy and mathematics, by which a philosophical idea inspires a mathematical analysis, which in turn raises further philosophical issues, and so on in a fruitful cycle. Let's get started.

\section{Models of arithmetic as potentialist systems}\label{Section.Models-of-arithmetic-as-potentialist-systems}

With the philosophical aim of illuminating the diverse natures of arithmetic potentialism, therefore, I propose that we undertake the comparatively mathematical task of analyzing the models of arithmetic $M$ under the modalities arising from their various natural extension concepts. We may view the result in each case as a potentialist system, a Kripke model of possible arithmetic worlds, exhibiting one of the flavors of potentialism.

The end-extension modality, for example, is defined by the operators:
\begin{align*}
  &M\satisfies\uppossible\varphi&\text{ if and only if }\quad &\varphi\text{ holds in some end-extension of }M,\text{ and}&\\
  &M\satisfies\upnecessary\varphi&\text{ if and only if }\quad &\varphi\text{ holds in all end-extensions of }M. &
\end{align*}
The arbitrary-extension modality, in contrast, is defined by:
\begin{align*}
  &M\satisfies\xpossible\varphi&\text{ if and only if }\quad &\varphi\text{ holds in some extension of }M,\text{ and} &\\
  &M\satisfies\xnecessary\varphi&\text{ if and only if }\quad &\varphi\text{ holds in all extensions of }M. &
\end{align*}
\enlargethispage{20pt}%
Other natural extension modalities include $\uppossible_n$ and $\xpossible_n$, which restrict the previous to $\Sigma_n$-elementary extensions, and in section~\ref{Section.Other-forms-of-arithmetic-potentialism} we shall consider the conservative end-extension modality $\consuppossible$, the computably saturated end-extension modality $\soliduppossible$, and many others.\goodbreak

Ultimately, we shall consider a large variety of natural extension modalities in this article (see page \pageref{Page.List-of-modalities} for definitions):
$$\begin{array}{ccccccccccccc}
  \possible & \uppossible & \xpossible & \soliduppossible & \solidxpossible & \consuppossible & \consxpossible & \solidconsuppossible & \solidconsxpossible & \ssypossible & \ipossible & \solidssypossible & \solidipossible\\
  \necessary & \upnecessary & \xnecessary & \solidupnecessary & \solidxnecessary & \consupnecessary & \consxnecessary &
  \solidconsupnecessary & \solidconsxnecessary & \ssynecessary & \inecessary & \solidssynecessary & \solidinecessary \\
\end{array}$$
Each extension concept for the models of arithmetic---a way of relating some models of arithmetic to other larger models---gives rise to a corresponding extension modality and its associated potentialist system of arithmetic.

Each of these extension concepts gives rise to a distinct particular philosophy of arithmetic modal potentialism, by which we view the models of arithmetic as the possible worlds in a suitable Kripke model expressing that particular kind of potentiality. The goal is to analyze the precise modal validities exhibited by these various natural potentialist systems. For generality, let us work in the class of all models of $\PAp$, an arbitrary fixed consistent c.e.~extension of \PA, but the central case is that $\PAp$ might just be \PA\ itself, and I shall use the phrase \emph{model of arithmetic} to mean a model of this fixed base theory $\PAp$. Since the domains of the models are inflationary with respect to these various extensions, these are potentialist systems in the sense of~\cite{HamkinsLinnebo2022:Modal-logic-of-set-theoretic-potentialism}. In that article, Linnebo and I had provided a general model-theoretic framework for potentialism, using it to analyze the precise modal commitments of various kinds of set-theoretic potentialism, and the project here should be seen as an arithmetic analogue. Following this article, Woodin and I carried out an analogous project~\cite{HamkinsWoodin:The-universal-finite-set} for top-extensional potentialism in set theory, with related further work in \cite{HamkinsWilliams2021:The-universal-finite-sequence, Hamkins2024:Every-countable-model-of-arithmetic-or-set-theory-has-a-pointwise-definable-end-extension}.

A main result, proved in section~\ref{Section.Modal-logic-of-arithmetic-potentialism}, will be the following.

\goodbreak
\begin{maintheorem*} With respect to the potentialist system consisting of the models of $\PAp$ under the end-extension modality $\uppossible$:
 \begin{enumerate}
   \item The potentialist validities of any $M\satisfies\PAp$, with respect to arithmetic assertions with parameters from $M$---and indeed just one specific parameter suffices---are exactly the modal assertions of \theoryf{S4}.
   \item The potentialist validities of any $M\satisfies\PAp$, with respect to arithmetic sentences, is a modal theory containing \theoryf{S4} and contained in \theoryf{S5}.
   \item Both of the bounds in (2) are sharp: there are models $M$ validating exactly \theoryf{S4} and others validating exactly \theoryf{S5} for sentences.
 \end{enumerate}
\end{maintheorem*}

I shall prove similar results for the other modalities, including $\uppossible$, $\xpossible$, $\uppossible_n$, $\xpossible_n$, $\consuppossible$, $\soliduppossible$ and sections~\ref{Section.Modal-logic-of-arithmetic-potentialism} and~\ref{Section.Other-forms-of-arithmetic-potentialism} have various further sharper results, including an analysis of the arithmetic maximality principle for these various extension concepts.

This theorem in part refines an earlier independent result of Volodya Shavrukov (appearing in Visser's overview of interpretability logic~\cite[theorem 16, credited to Shavrukov]{Visser1998:An-overview-of-interpretability-logic}), showing that the propositional modal assertions valid for sentential substitutions simultaneously in all models of arithmetic is \theoryf{S4}. Thus, there is a certain overlap of themes and ideas between this paper and prior work of Visser and Shavrukov on the modal logic of interpretability, and also with work of Berarducci, Blanck, Enayat, Japaridze, Shavrukov, Visser and Woodin in connection with the universal algorithm, as explained in section~\ref{Section.Universal-algorithm}.

To review the terminology of the theorem, a modal assertion $\varphi(p_0,\ldots,p_n)$ with propositional variables $p_i$ is said to be \emph{valid} at a world $M$ for a family of assertions, if $M\satisfies\varphi(\psi_0,\ldots,\psi_n)$ for any substitution of the propositional variables by those assertions $p_i\mapsto \psi_i$. Following~\cite{HamkinsLinnebo2022:Modal-logic-of-set-theoretic-potentialism}, let us denote by $\Val(M,\mathcal{L})$ the collection of propositional modal validities of a world $M$ with respect to assertions in a language $\mathcal{L}$. I shall at times use a subscript, as in $\Val_{\uppossible}(M,\Larith)$, to indicate which modality and accessibility relation is intended.

Let me illustrate the idea by showing that the modal assertions of \theoryf{S4} are valid in every model of arithmetic under the end-extension modality $\uppossible$. This is an elementary exercise in modal reasoning, which I encourage the reader to undertake. Every model of arithmetic $M$ obeys $\upnecessary(\varphi\to\psi)\to(\upnecessary\varphi\to\upnecessary\psi)$, since if an implication $\varphi\to\psi$ holds in every end-extension of $M$ and also the hypothesis $\varphi$, then so does the conclusion $\psi$. Similarly, every model obeys $\upnecessary\varphi\to\varphi$, since every model is an end-extension of itself. Every model obeys $\upnecessary\varphi\to\upnecessary\upnecessary\varphi$, since if $\varphi$ holds in all end-extensions, then $\upnecessary\varphi$ also holds in all end-extensions, because an end-extension of an end-extension is an end-extension. And every model obeys the duality axiom $\neg\upnecessary\varphi\iff\uppossible\neg\varphi$, since $\varphi$ fails to hold in all end-extensions just in case $\neg\varphi$ holds in some end-extension. Since these modal statements are precisely the axioms of \theoryf{S4}, to be closed under modus ponens and necessitation, one concludes that \theoryf{S4} is valid.

It is a more severe requirement on a modal assertion $\varphi(p_0,\ldots,p_n)$, of course, for it to be valid with respect to a larger collection of substitution instances; and so in particular, a modal assertion might be valid with respect to sentential substitutions, but not valid with respect to all substitutions by formulas with parameters. For example, the main theorem shows that some models validate \theoryf{S5} for sentences, but only \theoryf{S4} for arithmetic assertions with parameters.

It follows from the main theorem that the standard model $\N$ validates exactly \theoryf{S4} for sentential substitutions, since all parameters are absolutely definable in the standard model. Meanwhile, the models whose sentential validities are \theoryf{S5} are precisely the models of what I call the arithmetic maximality principle, which holds in a model of arithmetic $M$ when $M\satisfies\uppossible\upnecessary\sigma\implies\sigma$ for every arithmetic sentence $\sigma$. In other words, if a sentence $\sigma$ could become true in some end-extension of $M$ in such a way that it remains true in all further end-extensions, then it was already true in $M$. This holds, I prove in theorem~\ref{Theorem.Maximal-Sigma1-extension}, in precisely the models of arithmetic whose $\Sigma_1$ theory is a maximal consistent $\Sigma_1$ extension of the base theory $\PAp$.

Let me clarify the various formal languages that will be used in this work.
\begin{enumerate}
  \item The \emph{language of arithmetic} $\Larith=\set{+,\cdot,0,1,<}$ is the usual language in which for example the theory \PA\ is expressed, with expressions such as
 $$\forall x\exists y\, \left[(x=y+y)\vee(x=(y+y)+1)\right].$$
  \item The \emph{language of propositional logic}, in contrast, denoted $\mathcal{P}$, has no arithmetic or quantifiers, but is simply the closure of propositional variables $p$, $q$, $r$ and so on under Boolean logical connectives, with expressions such as
 $$\neg p\vee(p\implies q).$$
    \item The \emph{language of propositional modal logic}, denoted $\mathcal{P}^{\possible}$, extends $\mathcal{P}$ with the modal operators $\possible,\necessary$, having expressions such as
 $$p\implies\possible(p\wedge\possible\necessary\neg p).$$
\enlargethispage{20pt}\label{Languages}
\item The (partial) \emph{potentialist language of arithmetic}, denoted ${\possible}\Larith$, is the closure of the language of arithmetic $\Larith$ under the modal operators $\possible,\necessary$ and under the Boolean connectives. Thus, every ${\possible}\Larith$ assertion is a substitution instance $\varphi(\psi_0,\ldots,\psi_n)$ of a propositional modal logic assertion $\varphi(p_0,\ldots,p_n)\in\mathcal{P}^{\possible}$ by arithmetic assertions $\psi_i\in\Larith$, such as in the case
 $$\Con(\PA)\implies\possible\left[\Con(\PA)\wedge\possible\necessary \neg\Con(\PA)\right],$$
which is a substitution instance of the propositional modal assertion mentioned in (3) using $\psi=\Con(\PA)$.
 \goodbreak
 \item Finally, the full \emph{potentialist language of arithmetic}, denoted $\Larith^{\possible}$, allows all the usual constructions of the language of arithmetic plus the modal operators, as in the assertion
 $$\forall k \left[\necessary\Con(\PA_k)\implies\necessary\Con(\PA_{k+1})\right].$$
Note that in the full potentialist language of arithmetic, the modal operators can fall under the scope of a number quantifier, while this does not occur for expressions in the partial potentialist language ${\possible}\Larith$.
\end{enumerate}

The paper concludes in section~\ref{Section.Philosophical-remarks} with philosophical remarks on some fundamentally different potentialist attitudes and how they are expressed in the potentialist modal validities.

\section{Extensions of models of arithmetic}

For the rest of this article, let $\PAp$ be a fixed consistent computably enumerable theory extending $\PA$ in the language of arithmetic, defined by a fixed computable enumeration algorithm, which can therefore be interpreted in any model of arithmetic. Consider the potentialist systems consisting of all the models of $\PAp$ under the various extension relations. Let $\PAp_k$ be the theory fragment consisting of the axioms of $\PAp$ enumerated by stage $k$. These assertions will have length less than $k$, and so their arithmetic complexity will be less than $\Sigma_k$.

My argument will use the following classical result, a generalization of the fact that $\PA$ proves $\Con(\PA_k)$ for every standard finite $k$. This fact in turn implies that \PA, if consistent, is not finitely axiomatizable, for it proves the consistency of any of its particular finite fragments.

\begin{theorem}[Mostowski's reflection theorem~\cite{Mostowski1952:On-models-of-axiomatic-systems}]\label{Theorem.Mostowski-reflection-theorem}
For any standard $k$, the theory \PA\ proves $\Con(\Tr_k)$, where $\Tr_k$ is the $\Sigma_k$-definable collection of true $\Sigma_k$ assertions.
\end{theorem}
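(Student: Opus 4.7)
The plan is to work inside PA with a partial truth predicate. For each standard $k$, I would construct a $\Sigma_k$ formula $\Tr_k(x)$ defining truth for G\"odel codes of $\Sigma_k$ sentences, beginning from the standard $\Delta_0$ satisfaction predicate and prepending $k$ alternating quantifier blocks. An external induction on the structure of $\Sigma_k$ formulas then yields the Tarski T-scheme: for each standard $\Sigma_k$ formula $\varphi(\vec x)$,
$$\PA \vdash \forall \vec x\,\bigl[\,\varphi(\vec x)\;\iff\;\Tr_k\bigl(\gcode{\varphi(\dot{\vec x})}\bigr)\,\bigr].$$
In particular $\PA \vdash \neg\,\Tr_k\bigl(\gcode{0=1}\bigr)$, and PA proves the uniform compositional clauses for $\Tr_k$ over G\"odel codes of $\Sigma_k$ formulas, including closure under Boolean connectives and the appropriate quantifier blocks.

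Given this, I would derive $\Con(\Tr_k)$ by a formalized soundness argument inside PA. Reasoning in PA, suppose toward a contradiction that some derivation $\pi$ witnesses a proof of $0=1$ from premises $\sigma_1,\ldots,\sigma_n$ each with $\Tr_k(\sigma_i)$. Every formula appearing in $\pi$ has complexity bounded by some level $m\geq k$ determined by the length of $\pi$, so one can invoke the $\Sigma_m$ partial satisfaction predicate, which PA proves to be compositional and to extend $\Tr_k$ on its domain. A formal induction on the length of $\pi$ then shows that this satisfaction is preserved through the rules of first-order inference, so the truth of the premises transfers to the conclusion $0=1$, contradicting that no partial truth predicate marks $\gcode{0=1}$ as true.

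The principal obstacle is reconciling the fixed complexity of $\Tr_k$ with the fact that a derivation from $\Tr_k$ may involve intermediate formulas of arbitrary complexity, together with the subtlety that inside a nonstandard model of PA the G\"odel codes of premises need not be standard. The standard remedy I would adopt exploits that PA proves, uniformly in $m$, the existence and compositionality of a $\Sigma_m$ partial truth predicate, and that these predicates cohere on shared inputs via their T-schemes. Running the soundness of first-order logic inside PA at the complexity level appropriate to the given proof, and then pulling the resulting contradiction back to $\Tr_k$ via the T-scheme, yields $\PA\vdash\Con(\Tr_k)$ for each standard $k$.
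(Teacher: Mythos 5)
Your argument correctly sets up the partial truth predicate $\Tr_k$ and the soundness strategy, and you are right to flag complexity escalation as the main obstacle. But your proposed remedy contains a genuine gap. You claim that $\PA$ proves, \emph{uniformly in $m$}, the existence and compositionality of a $\Sigma_m$ partial truth predicate, and you then invoke ``the $\Sigma_m$ partial satisfaction predicate'' for an $m$ that depends on the length of the given derivation $\pi$. The trouble is that inside a nonstandard model of $\PA$, a purported derivation $\pi$ of $0=1$ from $\Tr_k$ can have nonstandard length, so the bound $m$ you extract from $\pi$ can be nonstandard, and there is no $\Sigma_m$ partial truth predicate for nonstandard $m$. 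What $\PA$ actually proves is a schema: for each \emph{standard} $m$, $\PA$ proves (as a separate theorem, with a formula $\Tr_m$ whose complexity grows with $m$) that the $\Sigma_m$ truth predicate is compositional. There is no single $\PA$-provable sentence asserting this uniformly over all $m$; such a sentence would yield a definable universal truth predicate, contradicting Tarski's theorem. Consequently the phrase ``running soundness at the complexity level appropriate to the given proof'' cannot be cashed out inside $\PA$ when the proof is nonstandard, which is exactly the situation one must handle.

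The standard way to close this gap is partial cut-elimination (free-cut elimination), and this is the ingredient missing from your sketch. $\PA$ proves, for each \emph{standard} $k$, that any first-order derivation of $0=1$ whose nonlogical axioms all have complexity at most $k$ can be transformed into one in which every formula occurring has complexity at most $k+c$ for a fixed standard constant $c$ (only cuts on substitution instances of axioms remain; the rest are eliminated and the subformula property takes over). This normalization has superexponential cost in the cut rank being removed, but $\PA$ proves the totality of the superexponential function, so the transformation goes through even for nonstandard derivations. After normalizing, the complexity bound $k+c$ is a \emph{standard} number fixed in advance, independent of $\pi$, and your induction on the length of the transformed derivation can be carried out using the single, fixed $\Sigma_{k+c}$ truth predicate and its $\PA$-provable compositionality clauses. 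Without some version of this step, the soundness induction cannot be formalized, and the argument stalls precisely at the obstacle you identified.
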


This is a theorem scheme, with a separate statement for each $k$. In any model of arithmetic $M\satisfies\PA$, the interpretation of $\Tr_k$ is the set of (possibly nonstandard) $\Sigma_k$ statements true in $M$ according to the universal $\Sigma_k$ truth predicate, which is a definable class of $M$. In nonstandard models, of course, this theory can differ significantly from the $\Sigma_k$ theory of the standard model. Note also that even $\Tr_1$ can include many statements that are independent of \PA. It follows immediately from the theorem that $\PAp$ proves the consistency of $\PAp_k$ for any standard finite $k$, since $\PAp_k$ will be included in $\Tr_k$ in any model of $\PAp$.

\enlargethispage{20pt}%
\begin{lemma}[Possibility-characterization lemma]\label{Lemma.Possibility-characterization}
 In the potentialist systems consisting of the models of $\PAp$ under end-extensions or arbitrary extensions, the following are equivalent for any model $M\satisfies\PAp$ and any assertion $\varphi(a)$ in the language of arithmetic with parameter $a\in M$.
 \begin{enumerate}
   \item $M\satisfies\uppossible\varphi(a)$. That is, $\varphi(a)$ is true in some $\PAp$ end-extension of $M$.
   \item $M\satisfies\xpossible\varphi(a)$. That is, $\varphi(a)$ is true in some $\PAp$ extension of $M$.
   \item $\varphi(a)$ is consistent with $\PAp+\tp_{\Sigma_1}^M\!(a)$, adding the $\Sigma_1$ type of $a$ in $M$.
   \item $M\satisfies\Con(\PAp_k+\varphi(a))$ for all standard numbers $k$.
   \item (For $M$ nonstandard) $M\satisfies\Con(\PAp_k+\varphi(a))$ for some nonstandard $k$.
 \end{enumerate}
\end{lemma}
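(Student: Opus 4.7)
The plan is to prove the equivalences via the cycle $(1)\Rightarrow(2)\Rightarrow(3)\Rightarrow(4)\Rightarrow(5)\Rightarrow(1)$, where $(5)$ is invoked only for nonstandard $M$ and the standard case $M=\N$ is handled by external compactness. The first two implications are essentially definitional: every end-extension is an extension, giving $(1)\Rightarrow(2)$; and for $(2)\Rightarrow(3)$, upward preservation of $\Sigma_1$ formulas under substructure extensions means that any $\PAp$-extension $N\supseteq M$ satisfying $\varphi(a)$ automatically realizes $\tp_{\Sigma_1}^M(a)$ and so witnesses the consistency asserted in $(3)$.

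For $(3)\Rightarrow(4)$, I would argue the contrapositive by internal reflection. Assuming $M\models\neg\Con(\PAp_k+\varphi(a))$ for some standard $k$, the $\Sigma_1$ assertion $\sigma(x):=\exists p\,\mathrm{Proof}(p,\ulcorner\bot\urcorner,\PAp_k\cup\{\varphi(\dot x)\})$ holds at $a$ in $M$ and hence lies in $\tp_{\Sigma_1}^M(a)$. Any model $N\models\PAp+\tp_{\Sigma_1}^M(a)+\varphi(a)$ would therefore satisfy $\PAp_k$, $\varphi(a)$, and $\mathrm{Prov}_{\PAp_k}(\neg\varphi(\dot a))$ simultaneously. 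But as a consequence of Mostowski's reflection theorem, $\PAp$ proves the uniform reflection scheme $\forall x\,(\mathrm{Prov}_{\PAp_k}(\neg\varphi(\dot x))\to\neg\varphi(x))$ for our standard $\varphi$ and standard $k$, since $\PAp_k\subseteq\Tr_n$ for $n$ exceeding the complexity of $\varphi$ and $\PAp$ proves $\Con(\Tr_n)$. Instantiating at $a$ inside $N$ yields $N\models\neg\varphi(a)$, contradicting $N\models\varphi(a)$. For $(4)\Rightarrow(5)$ in the nonstandard case, I would apply overspill: the definable set $\{k\in M:M\models\Con(\PAp_k+\varphi(a))\}$ contains every standard natural by hypothesis, so must contain a nonstandard element, since the standard cut is not definable in $M$.

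The substantive direction $(5)\Rightarrow(1)$ is carried out via the arithmetized completeness theorem applied internally in $M$. Given a nonstandard $k$ with $M\models\Con(\PAp_k+\varphi(a))$, I would pass to the internally defined expanded theory $T:=\PAp_k+\varphi(a)+\mathrm{Diag}_{\Delta_0}(M)$, using fresh constants $c_m$ for each $m\in M$. The key consistency step is to verify that $M$ continues to regard $T$ as consistent: a purported internal proof of inconsistency would, via internal $\Delta_0$-reflection (available because $\PAp$ proves soundness of its $\Delta_0$-truth predicate), produce a $\Delta_0$ falsehood about elements of $M$, contradicting the evident truth of the diagram. Arithmetized completeness then yields an $M$-definable Henkin model $N\models T$; interpreting $c_m\mapsto m$ gives a substructure embedding $M\hookrightarrow N$, and the $\Delta_0$-diagram axioms force this embedding to be initial, producing the required end-extension satisfying $\varphi(a)$. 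The standard case $M=\N$ is simpler: by $(4)$ and external compactness, $\PAp+\varphi(a)$ is consistent, and any such model end-extends $\N$ via the standard embedding. I expect the main obstacle to be the end-extension verification in $(5)\Rightarrow(1)$: arithmetized completeness alone produces merely an internal model of the stated theory, and one must carefully arrange the $\Delta_0$-diagram augmentation so that $M$'s internal consistency assertion is preserved under the expansion—a step that crucially uses the nonstandardness of $k$.
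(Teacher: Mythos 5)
Your argument follows the same cycle $(1)\Rightarrow(2)\Rightarrow(3)\Rightarrow(4)\Rightarrow(5)\Rightarrow(1)$ and the same key ingredients as the paper: $\Sigma_1$-preservation under extensions, Mostowski's $\Con(\Tr_n)$, overspill, and the internal Henkin construction. The first four arrows are correct as you present them; in particular your contrapositive form of $(3)\Rightarrow(4)$ via uniform reflection over $\PAp_k$ for \emph{standard} $k$ is a legitimate variant of the paper's direct argument, both reducing to $\Con(\Tr_n)$ applied in $N$ together with $\Sigma_1$-transfer of the would-be inconsistency statement from $M$ to $N$.

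The problem is in $(5)\Rightarrow(1)$. There you augment $\PAp_k+\varphi(a)$ by $\mathrm{Diag}_{\Delta_0}(M)$ and justify preservation of internal consistency by ``internal $\Delta_0$-reflection (available because $\PAp$ proves soundness of its $\Delta_0$-truth predicate).'' That justification does not stand as written: $\Delta_0$-reflection for $\PAp_k$ with $k$ nonstandard already yields $\Con(\PAp_k)$ (take the reflected sentence to be $\bot$), so it cannot be an outright theorem of $\PAp$, and the Tarski biconditionals for $\Tr_{\Delta_0}$---which $\PAp$ does prove---do not by themselves give the contradiction you describe. The correct tool is provable $\Sigma_1$-completeness: $\PA$ proves that every true $\Delta_0$ (indeed $\Sigma_1$) fact is $\PA$-provable, so inside $M$ any true $\Delta_0$ diagram axiom $\psi$ satisfies $\PAp_k\proves\psi$ (using that $k$ is nonstandard, so $\PAp\of\PAp_k$); thus a derivation of $\neg\psi$ from $\PAp_k+\varphi(a)$ would refute $M\satisfies\Con(\PAp_k+\varphi(a))$. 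With that repair your route closes.

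But the whole diagram augmentation is unnecessary, and the paper's terse line ``since the model is definable in $M$, it will be an end-extension'' is unpacking into exactly this: take the Henkin model $N$ of $\PAp_k+\varphi(a)$ built inside $M$ and send each $m\in M$ to the class of the numeral $\bar m$. Provable $\Sigma_1$-completeness (the same tool you need anyway) shows this map is a $\Delta_0$-elementary embedding, and a short internal induction on the completed Henkin theory---take the least $n\leq m$ with ``$t<\bar n$'' in the theory, conclude ``$t=\overline{n-1}$''---shows the image is an initial segment. So the Henkin model is an end-extension of $M$ automatically. Your instinct that ``arithmetized completeness alone produces merely an internal model'' is reasonable caution, but the standard resolution is the numeral embedding, not a diagram expansion with its attendant consistency obligations.
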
\goodbreak

This result can be seen as a version of the Orey-H\'ajek characterization of relative interpretability; an essentially similar observation is made by Blanck and Enayat~\cite[theorem 6.9]{Blanck2017:Dissertation:Contributions-to-the-metamathematics-of-arithmetic}. In section~\ref{Section.Other-forms-of-arithmetic-potentialism}, we shall see that these statements are also equivalent to $\consuppossible\varphi(a)$, $\consxpossible\varphi(a)$, and in computably saturated models, to $\soliduppossible\varphi(a)$, $\solidxpossible\varphi(a)$, and others. In the remarks before theorem~\ref{Theorem.Maximal-Sigma_n-extension}, we explain the analogue of lemma~\ref{Lemma.Possibility-characterization} for the $\Sigma_n$-elementary modal operators $\uppossible_n$ and $\xpossible_n$.

\begin{proof}
($1\to 2$) Immediate, since end-extensions are extensions.

($2\to 3$) Suppose that $M\satisfies\xpossible\varphi(a)$, so that there is some extension $N$ satisfying $\PAp+\varphi(a)$. Since every extension is $\Delta_0$-elementary, it follows that $\Sigma_1$ statements true in $M$ are preserved to $N$, and so $N$ satisfies every assertion in the $\Sigma_1$ type of $a$ in $M$. In other words, $N\satisfies\PAp+\varphi(a)+\tp_{\Sigma_1}^M\!(a)$, and so that theory is consistent.

($3\to 4$) Suppose that $\varphi(a)$ is consistent with $\PAp+\tp_{\Sigma_1}^M\!(a)$. So there is a model $N\satisfies\PAp+\varphi(a)+\tp_{\Sigma_1}^M\!(a)$, although $N$ may not necessarily extend $M$. (I am using $a$ to refer both to the original element of $M$ and also to the object realizing this type in $N$.) For any standard finite $k$, we have $N\satisfies\PAp_k+\varphi(a)$, and since these have bounded standard-finite complexity, it follows by the Mostowski reflection theorem that $N\satisfies\Con(\PAp_k+\varphi(a))$. So it cannot be that the original model $M$ satisfies $\neg\Con(\PAp_k+\varphi(a))$, for if it were, this would be a $\Sigma_1$ assertion about $a$ that is true in $M$ and therefore part of the type $\tp_{\Sigma_1}^M\!(a)$, which is supposed to be true in $N$, contrary to what was just observed.

($4\iff 5$) For nonstandard $M$, the forward implication follows by overspill; the converse implication is immediate, since the statement becomes stronger as $k$ becomes larger.

($4\to 1$) First consider nonstandard $M$. If $M$ thinks that $\PAp_k+\varphi(a)$ is consistent for some nonstandard $k$, then $M$ can complete this theory to a complete consistent Henkin theory and form the resulting Henkin model, which will be a model of $\PAp$ since all standard instances are included in the nonstandard theory fragment $\PAp_k$. It will also be a model $\varphi(a)$, since this is in the Henkin theory. Since the model is definable in $M$, it follows that it will be an end-extension of $M$, and so $\PAp+\varphi(a)$ will be true in an end-extension of $M$. So $M\satisfies\uppossible\varphi(a)$. In the case that $M$ is the standard model $\N$, then we get full $\Con(\PAp+\varphi(a))$, and the Henkin model again provides the desired end-extension.
\end{proof}

In the case of an arithmetic sentence $\sigma$, with no parameter $a$, then the equivalence of statement (3) should be taken as the assertion that $\sigma$ holds in an extension of a model $M$ if and only if it is consistent with $\PAp$ plus the $\Sigma_1$ theory of $M$.

The following dual version of lemma \ref{Lemma.Possibility-characterization} is an immediate consequence.

\begin{lemma}[Necessity-characterization lemma]\label{Lemma.Necessity-characterization}
 In the potentialist systems consisting of the models of $\PAp$ under end-extensions or arbitrary extensions, the following are equivalent for any  model $M\satisfies\PAp$ and any assertion $\varphi(a)$ in the language of arithmetic with parameter $a\in M$.
 \begin{enumerate}
   \item $M\satisfies\upnecessary\varphi(a)$. That is, $\varphi(a)$ is true in all $\PAp$ end-extensions of $M$.
   \item $M\satisfies\xnecessary\varphi(a)$. That is, $\varphi(a)$ is true in all $\PAp$ extensions of $M$.
   \item $\PAp+\tp_{\Sigma_1}^M\!(a)\proves\varphi(a)$ .
   \item $M\satisfies\neg\Con(\PAp_k+\neg\varphi(a))$ for some standard number $k$.
   \item $M\satisfies\ \PAp_k\proves\varphi(a)$ for some standard finite $k$.
 \end{enumerate}
\end{lemma}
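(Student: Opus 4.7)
The plan is to obtain this lemma as a straightforward dualization of the Possibility-characterization lemma just proved, using the propositional equivalence $\necessary\psi\equiv\neg\possible\neg\psi$ and applying that lemma to the arithmetic formula $\neg\varphi(a)$ in place of $\varphi(a)$. The class of arithmetic formulas with one parameter is closed under negation, so the previous lemma applies directly and no further model-theoretic construction is required here.

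Concretely, I would first settle (1) $\iff$ (2): by definition $\upnecessary\varphi(a)$ is $\neg\uppossible\neg\varphi(a)$ and $\xnecessary\varphi(a)$ is $\neg\xpossible\neg\varphi(a)$, so the equivalence of $\uppossible\neg\varphi(a)$ and $\xpossible\neg\varphi(a)$ coming from clauses (1) and (2) of the previous lemma yields (1) $\iff$ (2) here by negating both sides. For (2) $\iff$ (3), applying the equivalence of clauses (2) and (3) of the Possibility lemma to $\neg\varphi(a)$ gives that $\xpossible\neg\varphi(a)$ holds iff $\neg\varphi(a)$ is consistent with $\PAp+\tp_{\Sigma_1}^M\!(a)$, and negating reads exactly that this theory proves $\varphi(a)$, which is (3). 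The equivalence (3) $\iff$ (4) is obtained the same way from clause (4) of that lemma: $\uppossible\neg\varphi(a)$ holds iff $M\satisfies\Con(\PAp_k+\neg\varphi(a))$ for every standard $k$, whose negation is the existence of some standard $k$ with $M\satisfies\neg\Con(\PAp_k+\neg\varphi(a))$, which is (4).

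The last equivalence (4) $\iff$ (5) is not a dualization but a direct instance of the formalized deduction theorem: $\PA$ proves the scheme $\neg\Con(\PAp_k+\neg\psi)\iff \PAp_k\proves\psi$, so for each standard $k$ this equivalence holds in $M$ with $\psi$ instantiated as $\varphi(a)$, giving (4) $\iff$ (5) term-by-term in the standard-$k$ parameter. I do not anticipate any serious obstacle; the only point requiring a bit of care is that (5) asks only for some \emph{standard} $k$, which matches the asymmetry already visible in the Possibility lemma, where clause (4) there quantified universally over standard $k$ and the nonstandard refinement (clause (5) there) appeared only on the possibility side via overspill. On the necessity side there is no analogous nonstandard clause because the negation of a universal over standard $k$ is an existential over standard $k$, and the overspill move that produced the nonstandard version for possibility has no counterpart here.
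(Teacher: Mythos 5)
Your proposal is correct and matches the paper, which gives no separate proof at all but simply remarks that this lemma is ``an immediate consequence'' (by dualization) of the Possibility-characterization lemma; your working out of the negations, together with the standard formalized equivalence of $\neg\Con(\PAp_k+\neg\varphi(a))$ with $\PAp_k\proves\varphi(a)$ for clause (4)$\iff$(5), is exactly the intended argument. Your closing observation about why no nonstandard-$k$ clause appears on the necessity side is also accurate.
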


I find it interesting to notice that lemma~\ref{Lemma.Possibility-characterization} shows that possibility assertions such as $\uppossible\varphi$ or $\xpossible\varphi$, where $\varphi$ is arithmetic, have a $\forall$ character inside $M$ rather than $\exists$, because in statements (3) and (4) they are each equivalent to the infinite conjunction of arithmetic assertions over standard $k$, a kind of universal statement, whereas possibility is usually thought of for Kripke models as a kind of existential statement. The point is that model-existence assertions in this context amount to consistency, which is a $\forall$ assertion. Similarly, lemma~\ref{Lemma.Necessity-characterization} shows that all instances of necessity $\upnecessary\varphi(a)$ and $\xnecessary\varphi(a)$ for arithmetic assertions $\varphi(a)$ are caused by $\Sigma_1$ witnessing assertions in $M$, namely, the existence of a proof of $\varphi(a)$ from $\PAp_k$ in $M$ for some standard $k$. Although any true $\Sigma_1$ assertion is necessary, example~\ref{Observation.Necessary-but-not-Sigma1} shows that there are statements that are not $\Sigma_1$ and not provably equivalent to any $\Sigma_1$ assertion, which can nevertheless be necessary over a model of arithmetic. The lemma shows, however, that whenever this happens, it is because they are a provable consequence of a certain $\Sigma_1$ assertion that happens to be true in the model, namely, the inconsistency assertion of (4) or the proof-existence assertion of (5) in lemma~\ref{Lemma.Necessity-characterization}.

Putting the two lemmas together, we arrive at the following.

\begin{theorem}\label{Theorem.Possibly-necessary-characterization}
   In the potentialist systems consisting of the models of $\PAp$ under end-extensions or arbitrary extensions, the following are equivalent for any model $M\satisfies\PAp$ and any formula $\varphi$ in the language of arithmetic with parameter $a\in M$.
   \begin{enumerate}
     \item $M\satisfies\uppossible\upnecessary\varphi(a)$. That is, $\varphi(a)$ is end-extension possibly necessary over $M$.
     \item $M\satisfies\xpossible\xnecessary\varphi(a)$. That is, $\varphi(a)$ is extension possibly necessary over $M$.
     \item There is a standard finite number $n$ such that for every standard finite $k$ the assertion $\Con(\PAp_k+\neg\Con(\PAp_n+\neg\varphi(a)))$ is true in $M$.
     \item There is a standard finite number $n$ such that for every standard finite $k$ the assertion
      $\Con(\PAp_k+``\PAp_n\proves\varphi(a)")$ is true in $M$.
   \end{enumerate}
\end{theorem}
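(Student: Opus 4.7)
The plan is to combine the two preceding lemmas by unfolding the nested modality from the inside out, and then to observe that (3) and (4) are mere syntactic variants of one another. First I would apply Lemma~\ref{Lemma.Necessity-characterization} to the inner operator: for any $N \satisfies \PAp$ and $a \in N$, we have $N \satisfies \upnecessary \varphi(a)$ iff $N \satisfies \xnecessary \varphi(a)$ iff there exists a (meta-theoretically) standard $n$ with $N \satisfies \neg\Con(\PAp_n + \neg\varphi(a))$. Hence $M \satisfies \uppossible \upnecessary \varphi(a)$ is equivalent to the joint existence of a standard $n$ and an end-extension $N \fo M$ with $N \satisfies \neg\Con(\PAp_n + \neg\varphi(a))$.

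Next I would swap the two external existential quantifiers (over $n$ and over $N$), which is harmless since both range over meta-level objects. The condition then reduces to: there is a standard $n$ such that $\neg\Con(\PAp_n + \neg\varphi(a))$ is end-extension possible over $M$. Now for each fixed standard $n$, the sentence $\neg\Con(\PAp_n + \neg\varphi(a))$ is an ordinary arithmetic assertion with parameter $a$, so Lemma~\ref{Lemma.Possibility-characterization} translates this into the statement that for every standard $k$, $M \satisfies \Con(\PAp_k + \neg\Con(\PAp_n + \neg\varphi(a)))$, which is exactly condition (3). This establishes (1) $\iff$ (3).

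For (2) $\iff$ (3) the argument is verbatim the same, using the extension clauses of both lemmas in place of the end-extension clauses; since (1) and (2) both reduce to (3), the equivalence (1) $\iff$ (2) follows as well. Finally, (3) $\iff$ (4) is immediate from the $\PA$-provable equivalence of $\neg\Con(T + \neg\varphi)$ with ``$T \proves \varphi$'' for any c.e.\ theory $T$ and sentence $\varphi$, applied with $T = \PAp_n$ for each standard $n$.

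The only point requiring care---more a matter of bookkeeping than a genuine obstacle---is distinguishing internal from external quantification. The $n$ and $k$ appearing in (3) and (4) must range over the genuine natural numbers $\omega$ rather than over $M$, which is what legitimizes applying Lemma~\ref{Lemma.Necessity-characterization} and then Lemma~\ref{Lemma.Possibility-characterization} to the arithmetic sentences $\neg\Con(\PAp_n + \neg\varphi(a))$ individually for each standard $n$. Once this distinction is kept straight, the theorem is essentially a direct concatenation of the two characterization lemmas.
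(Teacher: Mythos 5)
Your proof is correct and follows essentially the same path as the paper's own argument: unfold the inner necessity via Lemma~\ref{Lemma.Necessity-characterization}, then handle the outer possibility via Lemma~\ref{Lemma.Possibility-characterization}, and observe that $(3)\iff(4)$ is trivial. The only cosmetic difference is that you establish $(1)\iff(3)$ and $(2)\iff(3)$ directly in parallel, whereas the paper proceeds cyclically $(1)\to(2)\to(3)\to(1)$, using the $\upnecessary$/$\xnecessary$ equivalence from the lemma to bridge $(1)$ to $(2)$; both organizations are equally valid and rest on the same ideas.
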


\begin{proof} The numbers $n$ and $k$ are standard.

($1\to 2$) Assume that $M\satisfies\uppossible\upnecessary\varphi(a)$, where $\varphi\in\Larith$. Since $\uppossible$ implies $\xpossible$, this means $M\satisfies\xpossible\upnecessary\varphi(a)$. By lemma~\ref{Lemma.Necessity-characterization}, since $\varphi\in\Larith$, it follows that $\upnecessary\varphi(a)$ is equivalent to $\xnecessary\varphi(a)$, and so $M\satisfies\xpossible\xnecessary\varphi(a)$, as desired.

($2\to 3$) If $M\satisfies\xpossible\xnecessary\varphi(a)$, then there is an extension $N$ of $M$ with $N\satisfies\xnecessary\varphi(a)$. By lemma~\ref{Lemma.Necessity-characterization} in $N$, there is some standard finite $n$ for which $N\satisfies\neg\Con(\PAp_n+\neg\varphi(a))$. Thus, $M\satisfies\xpossible\neg\Con(\PAp_n+\neg\varphi(a))$, and since this is an arithmetic assertion, it follows by lemma~\ref{Lemma.Possibility-characterization} that $M\satisfies\Con(\PAp_k+\neg\Con(\PAp_n+\neg\varphi(a)))$ for all standard $k$, establishing (3).

($3\to 1$) If there is $n$ for which all those consistency assertions are true in $M$, then by lemma~\ref{Lemma.Possibility-characterization} there is an end-extension in which $\neg\Con(\PAp_n+\neg\varphi(a))$ holds, making $\varphi(a)$ true in all end-extensions of $N$ by lemma~\ref{Lemma.Necessity-characterization}.

($3\iff 4$) Proving a statement is equivalent to inconsistency of the negated statement.
\end{proof}

Theorem~\ref{Theorem.Modalities-are-different} shows that the equivalence of $\uppossible$ and $\xpossible$ does not continue all the way into the full potentialist languages of arithmetic $\Larith^{\uppossible}$ and $\Larith^{\xpossible}$, and Shavrukov and Visser have proved (see remarks after question~\ref{Question.Possibility-equivalence-in-partial-potentialist-language}) that the equivalence can already break down inside ${\uppossible}\Larith$ and ${\xpossible}\Larith$.

\goodbreak
\begin{theorem}[{\cite[theorem 15]{Visser1998:An-overview-of-interpretability-logic}}]\label{Theorem.Standard-cut-definable}
 In any model of arithmetic $M\satisfies\PAp$, the standard cut is definable in the partial potentialist languages ${\uppossible}\Larith$ and ${\xpossible}\Larith$ as follows:
 \begin{align*}
   k\text{ is standard }\quad&\text{if and only if}\quad\upnecessary\Con(\PAp_k),\\
                        &\text{if and only if}\quad\xnecessary\Con(\PAp_k).
 \end{align*}
 Consequently, in any nonstandard model of arithmetic, the induction principle fails in the potentialist languages of arithmetic. Conversely, for formulas in the language of arithmetic $\varphi\in\Larith$, the assertion $\uppossible\varphi(a)$, which is equivalent to $\xpossible\varphi(a)$, is expressible in $\<M,+,\cdot,0,1,\N>$, with a predicate for the standard cut $\N$.
\end{theorem}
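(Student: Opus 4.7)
The plan is to combine Lemmas~\ref{Lemma.Possibility-characterization} and~\ref{Lemma.Necessity-characterization} with a formalized application of \Godel's second incompleteness theorem inside $M$. The easy direction is: if $k$ is standard, then by Mostowski's reflection theorem (Theorem~\ref{Theorem.Mostowski-reflection-theorem}) $\PAp\proves\Con(\PAp_k)$, so $\Con(\PAp_k)$ holds in every $\PAp$-model, and hence in every end-extension and every extension of $M$, yielding both $\upnecessary\Con(\PAp_k)$ and $\xnecessary\Con(\PAp_k)$.

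For the converse, let $k\in M$ be nonstandard. It suffices, via the equivalence $\upnecessary\iff\xnecessary$ on arithmetic formulas from Lemma~\ref{Lemma.Necessity-characterization}, to show $M\not\models\upnecessary\Con(\PAp_k)$. By item~(5) of that lemma, this amounts to ruling out $M\models\PAp_j\proves\Con(\PAp_k)$ for every standard $j$. Suppose for contradiction some such standard $j$ exists. By monotonicity of the enumeration I may enlarge $j$ to a standard index $j_0$ for which $\PAp_{j_0}$ contains a fixed finitely axiomatized base strong enough for the formalized second incompleteness theorem in $\PA$. Since $j_0<k$ in $M$, we have $\PAp_{j_0}\subseteq\PAp_k$ in $M$'s view, and the inclusion-consistency implication $\Con(\PAp_k)\to\Con(\PAp_{j_0})$ is provable from the base; combining with the hypothesis, $M\models\PAp_{j_0}\proves\Con(\PAp_{j_0})$. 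On the other hand, Mostowski's reflection gives $M\models\Con(\PAp_{j_0})$, and the formalized \Godel second incompleteness theorem applied inside $M$ to $\PAp_{j_0}$ then yields $M\models\neg(\PAp_{j_0}\proves\Con(\PAp_{j_0}))$, contradicting the previous line within $M$.

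The induction-failure statement is then immediate: the $\uppossible(\Larith)$-formula $\Phi(k):=\upnecessary\Con(\PAp_k)$ defines precisely the standard cut in any nonstandard $M\models\PAp$, which contains $0$, is closed under successor, yet is a proper initial segment of $M$, so induction for $\Phi$ fails in $M$; the same argument with $\xnecessary$ handles $\xpossible(\Larith)$.

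For the closing expressibility claim, Lemma~\ref{Lemma.Possibility-characterization}(4) gives, for arithmetic $\varphi\in\Larith$, that $M\models\uppossible\varphi(a)$ iff $M\models\Con(\PAp_k+\varphi(a))$ for every standard $k$; in the structure $\langle M,+,\cdot,0,1,\N\rangle$ this is captured by the single first-order formula $\forall k\,[\N(k)\to\Con(\PAp_k+\varphi(a))]$. The main obstacle is the internal \Godel step: one must justify that for sufficiently large standard $j_0$, both the inclusion-consistency implication and the formalized \Godel second theorem are internally available inside $M$ as applied to the specific theory $\PAp_{j_0}$, so that the hypothesized internal proof $\PAp_{j_0}\proves\Con(\PAp_{j_0})$ and its internal refutation really do collide within $M$.
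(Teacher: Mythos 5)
Your proof is correct, but the nonstandard-$k$ direction takes a genuinely different route from the paper's. Where you invoke Lemma~\ref{Lemma.Necessity-characterization}(5) to reduce $\upnecessary\Con(\PAp_k)$ to the existence of a standard $j$ with $M\models\PAp_j\proves\Con(\PAp_k)$, and then derive a contradiction by applying formalized G\"odel's second incompleteness theorem inside $M$ to a fixed standard finite fragment $\PAp_{j_0}$, the paper argues directly: assuming $M\models\Con(\PAp_k)$ with $k$ nonstandard, it applies formalized second incompleteness inside $M$ to the nonstandard theory $\PAp_k$ itself, obtaining $M\models\Con(\PAp_k+\neg\Con(\PAp_k))$, and then forms the definable Henkin model of this theory inside $M$, producing an end-extension $N\models\PAp$ with $N\models\neg\Con(\PAp_k)$, whence $M\models\uppossible\neg\Con(\PAp_k)$. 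Both approaches are valid, but note the asymmetry in the technical burden: in the paper's version the theory $\PAp_k$ to which the internal second incompleteness theorem is applied has nonstandard $k$ and therefore contains all of $\PAp$, so it automatically satisfies the derivability conditions; in your version, $\PAp_{j_0}$ with $j_0$ standard is a genuine finite fragment, and one must verify (as you correctly flag at the end) that for sufficiently large standard $j_0$ this fragment is strong enough---containing Robinson's~$Q$ and enough induction instances to prove, say, $I\Sigma_1$---for the formalized second incompleteness theorem to be available as a $\PA$-theorem applied to $\PAp_{j_0}$. Since $\PAp\supseteq\PA$ proves the finitely many axioms of a finite axiomatization of $I\Sigma_1$, and each such proof uses only finitely many axioms appearing at standard stages, some standard $j_0$ does suffice, so the gap you flag is closable; but the paper's choice of the nonstandard fragment sidesteps the issue entirely. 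One more remark: your route is logically somewhat circular-looking since Lemma~\ref{Lemma.Necessity-characterization} is itself established via the possibility lemma's Henkin-model construction, so the Henkin construction is present in your argument too, just packaged inside the invoked lemma rather than written out. Your treatment of the easy direction, the induction failure, and the closing expressibility claim all coincide with the paper's.
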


\begin{proof}
If $k$ is standard, then $\Con(\PAp_k)$ holds by theorem~\ref{Theorem.Mostowski-reflection-theorem}, and since this is true in all extensions, we may also conclude $\upnecessary\Con(\PAp_k)$ and $\xnecessary\Con(\PAp_k)$. If $k$ is nonstandard, however, then there is always an end-extension where $\Con(\PAp_k)$ fails, for if it hasn't failed yet, then $\PAp_k$ is consistent in the model, and so by the incompleteness theorem we may form a complete consistent Henkin theory extending $\PAp_k+\neg\Con(\PAp_k)$, which provides an end-extension model of $\PAp$ in which $\neg\Con(\PAp_k)$. So $\neg\upnecessary\Con(\PAp_k)$ and hence $\neg\xnecessary\Con(\PAp_k)$ in the original model.

Since the standard cut contains $0$ and is closed under successor, this provides a violation of the induction principle for the partial potentialist language in any nonstandard model.

For the converse direction, if we have a predicate for the standard cut, then $\uppossible\varphi(a)$ and $\xpossible\varphi(a)$ are expressible, since by lemma~\ref{Lemma.Possibility-characterization}, these are both equivalent to asserting $\Con(\PAp_k+\varphi(a))$ for all standard $k$.
\end{proof}

Let me generalize theorem~\ref{Theorem.Standard-cut-definable} to the $\Sigma_n$-elementary extension relations.

\begin{theorem}
 In any model of arithmetic $M\satisfies\PAp$ and for any standard $n$, the standard cut of $M$ is definable in the partial potentialist languages ${\uppossible_n}\Larith$ and ${\xpossible_n}\Larith$ as follows:
 \begin{align*}
   k\text{ is standard }\quad&\text{if and only if}\quad\upnecessary_n\Con(\Tr_n+\PAp_k),\\
                        &\text{if and only if}\quad\xnecessary_n\Con(\Tr_n+\PAp_k).
 \end{align*}
\end{theorem}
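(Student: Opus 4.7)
Plan. I will mirror the proof of Theorem~\ref{Theorem.Standard-cut-definable}, substituting the $\Sigma_n$-elementary modalities for the ordinary extension modalities and substituting $\Con(\Tr_n+\PAp_k)$ for $\Con(\PAp_k)$.

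For the forward direction, suppose $k$ is standard. Then $\PAp_k$ consists of standardly-finitely many axioms of bounded standard arithmetic complexity, say all $\Sigma_m$, so in every model of $\PAp$ each axiom of $\PAp_k$ is true and $\Sigma_m$, hence an element of $\Tr_{\max(n,m)}$. Thus $\Tr_n+\PAp_k$ is a subtheory of $\Tr_{\max(n,m)}$, whose consistency is delivered by Mostowski's reflection theorem~\ref{Theorem.Mostowski-reflection-theorem}. So $\PAp\proves\Con(\Tr_n+\PAp_k)$, and this consistency assertion therefore holds in every model of $\PAp$---in particular, in every $\Sigma_n$-elementary extension and every arbitrary extension of $M$---yielding $\upnecessary_n\Con(\Tr_n+\PAp_k)$ and $\xnecessary_n\Con(\Tr_n+\PAp_k)$.

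For the backward direction, suppose $k$ is nonstandard; I must exhibit a $\Sigma_n$-elementary end-extension $N$ of $M$ on which $\Con(\Tr_n+\PAp_k)$ fails. If $M$ itself already fails this we are done by reflexivity, so assume $M\satisfies\Con(\Tr_n+\PAp_k)$. Because $\Tr_n+\PAp_k$ is $\Sigma_n$-definable in $M$, the second incompleteness theorem (in its version for $\Sigma_n$-definable theories) gives $M\satisfies\Con(\Tr_n+\PAp_k+\neg\Con(\Tr_n+\PAp_k))$. To promote a Henkin model of this consistent theory to a $\Sigma_n$-elementary end-extension of $M$, I adjoin the $\Pi_n$-diagram of $M$---the collection of negations of all false $\Sigma_n$-sentences of $M$---and perform a Henkin completion internal to $M$ of the enriched theory $\Tr_n+\Pi_n\text{-diagram}(M)+\PAp_k+\neg\Con(\Tr_n+\PAp_k)$. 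Containment of $\Tr_n^M$ secures preservation of $\Sigma_n$-truth from $M$ to $N$, and containment of the $\Pi_n$-diagram secures preservation of $\Pi_n$-truth, so together $M\elesub_n N$; meanwhile $N\satisfies\neg\Con(\Tr_n+\PAp_k)$ witnesses $\neg\upnecessary_n\Con(\Tr_n+\PAp_k)$. The conclusion $\neg\xnecessary_n\Con(\Tr_n+\PAp_k)$ then follows automatically, since every $\Sigma_n$-elementary end-extension is in particular a $\Sigma_n$-elementary extension.

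The principal technical obstacle is establishing the consistency inside $M$ of the enriched theory, since $M$ itself models everything except the final $\neg\Con$ clause. I anticipate handling this by first showing $M\satisfies\Con(\Tr_n+\Pi_n\text{-diagram}(M)+\PAp_k)$ via a reflection-style argument using Tarski's biconditionals for the $\Sigma_n$-truth predicate together with the fact that each $\PAp_k$-axiom is satisfied by $M$, and then applying the second incompleteness theorem to this $\Sigma_n\cup\Pi_n$-definable theory. Alternatively, one can bypass the Henkin construction entirely by invoking the $\Sigma_n$-analogue of the possibility characterization lemma~\ref{Lemma.Possibility-characterization}, promised in the remarks before theorem~\ref{Theorem.Maximal-Sigma_n-extension}, which reduces $\uppossible_n\neg\Con(\Tr_n+\PAp_k)$ directly to a consistency statement that follows from incompleteness applied to $\Tr_n+\PAp_k$ in $M$.
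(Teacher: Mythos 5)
Your forward direction matches the paper's. In the backward direction, however, you are missing the key device the paper actually uses, and the black box you invoke in its place---``the second incompleteness theorem in its version for $\Sigma_n$-definable theories''---is precisely the step that has to be justified but is not.

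The paper never applies incompleteness to the $\Sigma_n$-presented theory $\Tr_n+\PAp_k$ directly. Instead it first passes to an elementary end-extension $M^+$ of $M$, uses a tall element of $M^+$ to overspill and form a single \emph{nonstandard finite conjunction} $\tau$ of sentences from $\Tr_n^{M^+}$ that contains all of $\Tr_n^M$, and then applies the \emph{ordinary} second incompleteness theorem to the theory $\tau+\PAp_k$ inside $M^+$. That overspill trick is the whole point: it converts the $\Sigma_n$-axiomatized theory into a one-sentence-plus-c.e.\ theory, so the standard incompleteness machinery and the arithmetized completeness theorem apply cleanly; the resulting Henkin model $N$ of $\tau+\PAp_k+\neg\Con(\tau+\PAp_k)$ satisfies $\tau$, which gives $M\elesub_{\Sigma_n}N$ and places the conjuncts of $\tau$ in $\Tr_n^N$, whence $N\satisfies\neg\Con(\Tr_n+\PAp_k)$. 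Your route of staying inside $M$ requires verifying the Hilbert--Bernays--L\"ob derivability conditions for a $\Sigma_n$ provability predicate (condition D3 in particular needs a formalized $\Sigma_n$-completeness fact, which happens to be available because $\Tr_n$ is itself among the axioms, but carrying that out at the right level of formalization is real work); you flag this yourself as ``the principal technical obstacle'' and only ``anticipate handling'' it, and that anticipation is where the argument stops. Your suggested alternative---invoking the $\Sigma_n$-analogue of lemma~\ref{Lemma.Possibility-characterization}---is circular, since that lemma's $\Sigma_n$ version is established in the paper by exactly the same $M^+$/overspill method that your proposal is trying to bypass. (A further minor divergence: you augment the theory with the $\Pi_n$-diagram to secure $\Sigma_n$-elementarity, whereas the paper draws the conclusion $M\elesub_{\Sigma_n}N$ from $\tau\supseteq\Tr_n^M$ alone; but this is a side issue next to the missing overspill device.)
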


\begin{proof}
By $\Tr_n$, I mean the $\Sigma_n$ theory as it is defined by the universal $\Sigma_n$ definition. Because this theory is not c.e.~when $n>1$, the complexity of the consistency assertion $\Con(\Tr_n+\PAp_k)$ will generally rise to $\Pi_{n+1}$. Note that reference is made to $\Tr_n$ by its definition, and so this expression is in effect re-interpreted in the extensions of $M$ for the purpose of evaluating the modal assertion in $M$.

If $k$ is standard, then we get $\Con(\Tr_n+\PAp_k)$ in $M$ and its extensions by the Mostowski reflection theorem (theorem~\ref{Theorem.Mostowski-reflection-theorem}). Conversely, suppose that $k$ is nonstandard in $M$. It suffices to find a $\Sigma_n$-elementary end-extension of $M$ in which the theory $\Tr_n+\PAp_k$ is inconsistent. We may assume $\Tr_n+\PAp_k$ is consistent in $M$. Let $M^+$ be any elementary end-extension of $M$. In $M^+$, let $\tau$ be the conjunction of a (nonstandard) finite part of $\Tr_n^{M^+}$, long enough so that every assertion of $\Tr_n^M$ appears as a conjunct of $\tau$. By elementarity, the theory $\tau+\PAp_k$ is consistent in $M^+$. By the incompleteness theorem in $M^+$ applied to this theory, it follows that $\tau+\PAp_k+\neg\Con(\tau+\PAp_k)$ is consistent in $M^+$. The Henkin model of this theory as constructed in $M^+$ provides an end-extension $N\satisfies\tau+\PAp_k+\neg\Con(\tau+\PAp_k)$. Since $N\satisfies\tau$, it follows that $M\elesub_{\Sigma_n} N$. Since $k$ is nonstandard, it follows that $N\satisfies\PAp$. Since $\tau$ is part of $\Tr_n^{M^+}$ and true in $N$, according to $M^+$, it follows that the conjuncts of $\tau$ are part of $\Tr_n^N$, and thus $N\satisfies\neg\Con(\Tr_n+\PAp_k)$, as desired.
\end{proof}

Although lemma~\ref{Lemma.Possibility-characterization} shows that $\uppossible\varphi(a)$ is equivalent to $\xpossible\varphi(a)$ when $\varphi$ is an assertion in the language of arithmetic, let me now prove that this does not extend to formulas $\varphi$ in the full potentialist language of arithmetic.

\goodbreak
\begin{theorem}\label{Theorem.Modalities-are-different}
 There is a model of arithmetic $M\satisfies\PAp$ and a sentence $\sigma\in\Larith^{\uppossible}$ in the potentialist language of arithmetic, such that $M\satisfies\xpossible\sigma\wedge\neg\uppossible\sigma$.
\end{theorem}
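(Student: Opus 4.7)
The sentence $\sigma$ must genuinely use the extra expressive power of $\Larith^{\uppossible}$ over the partial language $\uppossible(\Larith)$, since by lemma~\ref{Lemma.Possibility-characterization} the modalities $\uppossible$ and $\xpossible$ agree on formulas of $\Larith$. The structural asymmetry to exploit is that end-extensions of $M$ add no new elements below any fixed $a\in M$, whereas arbitrary extensions can interpolate new elements between existing ones; this asymmetry is invisible to the partial language but can be surfaced by putting modal operators inside the scope of a number quantifier.

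I would take a nonstandard $M\satisfies\PAp$ with a nonstandard $c\in M$ and build, by a routine compactness argument applied to the elementary diagram of $M$ together with the constraints $\bar d<c$ and $\bar d\neq m$ for each $m\in M$ with $m<c$, an extension $N\supseteq M$ containing a fresh element $d\in N\setminus M$ with $d<c$. No end-extension of $M$ admits such a $d$, since every new element of an end-extension lies strictly above every element of $M$ and hence above $c$.

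Using the definable standard cut from theorem~\ref{Theorem.Standard-cut-definable} together with modal expressions placed under the scope of a number quantifier, I would design $\sigma:=\exists x\,\theta(x)$ so that $\theta(x)$ marks $x$ as such an interpolated element: it demands that $x$ be bounded above by some element satisfying a prescribed $\uppossible$-expressible ordering property, while simultaneously failing a condition that every element of the ambient model determined by a $\Sigma_1$-formula must satisfy. Verifying $N\satisfies\theta(d)$ then gives $M\satisfies\xpossible\sigma$. The check $M\satisfies\neg\uppossible\sigma$ proceeds by cases in an arbitrary end-extension $M'$ of $M$: an element $x\in M$ fails by the $\Sigma_1$-type clause, while an element $x\in M'\setminus M$ lies above every element of $M$ and so cannot meet the bounded-above clause.

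The main obstacle is the precise formulation of $\theta(x)$: the modal operators inside the scope of $\exists x$ must discriminate between elements that are "old" and "new" relative to the current model without any parameter referring to $M$, and must do so in a manner unavailable in the partial language (whose inability to distinguish the two modalities is exactly what lemma~\ref{Lemma.Possibility-characterization} formalises). Finding the correct combination of $\uppossible$ and $\upnecessary$ together with first-order bounded quantifiers under $\exists x$ to force the required bifurcation is the delicate step; once $\theta$ is chosen correctly, the rest of the argument is essentially just verifying that $d$ witnesses it in $N$ and that neither the elements of $M$ nor the large elements added by any end-extension can.
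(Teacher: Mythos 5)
Your structural observation is correct---end-extensions of $M$ never interpolate new elements below an element of $M$, while arbitrary extensions can---but the argument as sketched has a genuine gap at exactly the place you flag as ``the main obstacle,'' and I do not think it can be repaired along the lines you propose. The sentence $\sigma$ must be evaluated entirely inside the extension $N$, with no access to $M$ as a parameter. Once you are standing in $N$, there is simply no $\Larith^{\uppossible}$-expressible way to separate the ``old'' nonstandard elements below $c$ (those that were in $M$) from the ``new'' interpolated ones: they are all just nonstandard elements of $N$, and in a suitably homogeneous $N$ there may even be an automorphism of $N$ moving $d$ to an element of $M$. Your proposed $\theta(x)$ would need a clause true of exactly the elements of $M$ and false of the new ones, but being an element of $M$ is not an intrinsic property of $N$; it is a relation between $N$ and the particular submodel $M$, which the language cannot mention. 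So the ``verifying $N\satisfies\theta(d)$'' step and the ``$x\in M$ fails by the $\Sigma_1$-type clause'' step cannot both be realized by any single $\theta$.

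What is needed is an \emph{intrinsic} invariant of $N$, expressible in the potentialist language (which can name the standard cut by theorem~\ref{Theorem.Standard-cut-definable}), that cannot change along end-extensions but can change along arbitrary extensions. The paper uses the standard system: if $M$ is nonstandard and $M\subseteq_{\text{end}}N$, then $\SSy(N)=\SSy(M)$, whereas an arbitrary (even elementary, cofinal) extension can enlarge the standard system. Concretely, the paper takes $\sigma$ to assert ``the halting problem $0'$ is in the standard system,'' which is an $\Larith^{\uppossible}$-sentence once the standard cut is expressible, and chooses $M$ with low elementary diagram (via the low basis theorem) so that $0'\notin\SSy(M)$; then no end-extension can make $\sigma$ true, while a compactness argument produces an elementary extension in which $0'$ is coded. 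Your interpolation observation is the order-theoretic shadow of the same phenomenon (it is precisely interpolation that allows new reals to get coded into the standard system), but on its own it does not give a sentence; you need to route it through a model-internal invariant such as $\SSy$.
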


\begin{proof} The sentence $\sigma$ will assert that the (standard) halting problem is in the standard system of the model, that is, the collection of subsets of $\N$ coded in the model. This is expressible in either of the languages $\Larith^{\uppossible}$ or $\Larith^{\xpossible}$, because in these potentialist languages, we can refer to the standard cut. Specifically, take $\sigma$ to be the assertion $\exists h\forall e{\in}\N\, (e\in h\iff\exists s{\in}\N\, \varphi_{e,s}(e)\converges)$, which can be translated in the potentialist language as
 $$\sigma\quad=\quad\exists h\forall e\left[\necessary\Con(\PAp_e)\to(e\in h\iff \exists s\, \varphi_{e,s}(e)\converges\wedge\necessary\Con(\PAp_s))\right].$$
Since the modality $\necessary$ here is applied only to arithmetic assertions, it doesn't matter whether we use $\upnecessary$ or $\xnecessary$, since they are equivalent for this case.

Since $\PAp$ is a consistent c.e. theory, it has a model $M$ whose elementary diagram is low and therefore does not have the halting problem $0'$ in $\SSy(M)$. Therefore also every end-extension of $M$ will fail to have $0'$ in its standard system, and so $M\not\satisfies\uppossible\sigma$. Yet, a simple compactness argument finds an elementary extension $M\elesub N$ with $0'\in\SSy(N)$, and so $M\satisfies\xpossible\sigma$.
\end{proof}

So the two modalities $\uppossible$ and $\xpossible$ are not always identical when applied to assertions in the full potentialist language of arithmetic. The sentence $\sigma$ used in the proof of theorem~\ref{Theorem.Modalities-are-different} is in the potentialist language $\Larith^{\possible}$, but not in the partial language ${\possible}\Larith$, because it has modal operators appearing under the scope of number quantifiers. Therefore it is natural to inquire exactly where the difference between $\uppossible$ and $\xpossible$ first arises, and in particular, whether their equivalence in the language of arithmetic $\Larith$ extends into the partial potentialist languages ${\uppossible}\Larith$ and ${\xpossible}\Larith$.

\begin{question}\label{Question.Possibility-equivalence-in-partial-potentialist-language}
 Are $\uppossible\varphi(a)$ and $\xpossible\varphi(a)$ equivalent for $\varphi$ in the partial potentialist languages ${\uppossible}\Larith$ and ${\xpossible}\Larith$?
\end{question}

This question has evidently been answered negatively by Visser and Shavrukov in unpublished work, some of which may have appeared in an 2016 email exchange between them, in which Visser proved the existence of separating sentences as an abstract consequence of~\cite{Visser2015:Semantic-completion-of-PA-is-complete}, with Shavrukov proving specifically that there is an arithmetic sentence $\varphi$ and a model of arithmetic $M$ satisfying both $\xpossible\xnecessary\xpossible\neg\varphi$ and $\upnecessary\uppossible\upnecessary\varphi$.

Similar questions arise with essentially all of the modal operators appearing in this article, such as $\consuppossible$, $\consxpossible$, $\soliduppossible$, $\solidxpossible$ and all the others. Exactly how much do all these modal operators agree? When they do disagree, at what level of complexity of assertions do the disagreements begin to show up?

Let me generalize the idea behind the proof of theorem~\ref{Theorem.Modalities-are-different} to show that actually arithmetic truth for the standard model is expressible in the language of extensional-potentialist arithmetic. Indeed, even projective truth is expressible.

\begin{theorem}
 The standard truth predicate for the standard model $\N$ is definable by a formula in the extension-potentialist language of arithmetic $\Larith^{\xpossible}$.
\end{theorem}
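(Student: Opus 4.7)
The plan is to define $\N$-truth via a modal internalization of Tarski's recursion. Writing $\nu(x):=\xnecessary\Con(\PAp_x)$ for the $\Larith^{\xpossible}$-formula picking out the standard cut from theorem~\ref{Theorem.Standard-cut-definable}, the proposed formula is
$$\Tr(\sigma)\ :=\ \xpossible\ \exists T\ [\,\Phi(T)\ \wedge\ T(\sigma)=1\,],$$
where $\Phi(T)$ asserts that $T$ codes a truth-value assignment on formula codes such that, for every code $\tau$ satisfying $\nu(\tau)$, the Tarski clauses hold relative to $\N$: atomic sentences receive their decidable truth value, $T(\neg\rho)=1-T(\rho)$, $T(\rho\wedge\rho')=T(\rho)\cdot T(\rho')$, and
$$T(\ulcorner\exists x\,\rho(x)\urcorner)=1\ \Longleftrightarrow\ \exists n\,(\nu(n)\wedge T(\ulcorner\rho(\bar n)\urcorner)=1).$$
The outer $\xpossible$ is essential, since in the case $M=\N$ no such $T$ can be coded by a natural number, while a nonstandard extension can always supply one.

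For correctness I would argue both directions by external induction on standard $\sigma$. In the soundness direction, if $M\satisfies\Tr(\sigma)$ then some extension $N$ of $M$ contains $T$ witnessing $\Phi(T)\wedge T(\sigma)=1$; since $\nu$ defines the same external $\N$ in $M$ and $N$, and the Tarski clauses with the decidable atomic base uniquely determine $T$'s values on codes with $\nu$, an external induction on the complexity of $\sigma$ yields $\N\satisfies\sigma$. For completeness, assuming $\N\satisfies\sigma$ I would construct the required extension via a compactness argument in the spirit of the proof of theorem~\ref{Theorem.Modalities-are-different}: adjoin a new constant $c$ to the elementary diagram of $M$ together with axioms fixing the $\tau$th bit of $c$ to be the $\N$-truth value of $\tau$ for each meta-standard $\tau$. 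Finite satisfiability is immediate, since any finitely many specified bits can be realized by an element of any nonstandard elementary extension of $M$, and $T:=c$ in the resulting $N$ witnesses $\Tr(\sigma)$.

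The extension to projective truth promised in the remark preceding the theorem proceeds by further using the modal operators to quantify over reals. A second-order quantifier $\exists r\in\R$ is rendered as $\xpossible\exists h$, with $h$ ranging over codes of subsets of $\N$ in the ambient model, and dually $\forall r\in\R$ as $\xnecessary\forall h$; a compactness argument like the above shows that every real of the meta-theoretic universe appears in the standard system of some extension, while every coded subset of $\N$ in any model is a genuine real, so the modal alternations faithfully mirror the second-order alternations. Substituting the arithmetic $\Tr$ into the matrix then yields the projective truth predicate.

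The principal obstacle I anticipate lies in the soundness direction: one must verify that $\Phi(T)$ forces $T$'s restriction to standard indices to coincide with genuine $\N$-truth rather than with some spurious fixed point. This is controlled by restricting the Tarski-clause requirement in $\Phi(T)$ to codes $\tau$ with $\nu(\tau)$, exploiting that the standard cut is absolute between $M$ and its extensions, together with the uniqueness of the fixed point of Tarski's clauses on the subformula-closed standard-indexed domain, whose base case on atomic sentences is decidable.
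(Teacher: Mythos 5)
Your proposal is correct and takes essentially the same approach as the paper: define $\Tr$ by asserting $\xpossible\exists t$ such that $t$ codes, in the standard system, a truth assignment obeying Tarski's recursion at all standard codes (using the modal definition of the standard cut), and then reading off the bit at $\sigma$. One small repair: your displayed formula should carry an explicit conjunct $\nu(\sigma)$ declaring $\sigma$ standard, since as written $\Phi(T)$ constrains $T$ only at standard indices and so $T(\sigma)=1$ is realizable in a suitable extension for every nonstandard $\sigma$; the paper's formulation includes this guard. Your soundness and completeness arguments supply details the paper compresses, and they go through.
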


\begin{proof} The standard truth predicate is unique and we can always add it to the standard system of a suitable extension. So we can define it in the modal language by saying $\Tr(x)$ holds if and only if $x$ is standard and it is possible that there is a number $t$ coding the standard truth predicate in the standard system, and $x\in t$. To say that a number $t$ codes the standard truth predicate in the standard system is expressible by a single property about $t$ in the potentialist language, since the standard cut is definable in the potentialist language, and so we can say that the set coded by $t$ obeys the Tarskian recursion for truth in the standard model.
\end{proof}

Let me push the idea much harder. In fact, projective truth, which is to say, second-order truth over the standard model of arithmetic $\N$, is expressible in the potentialist language of arithmetic in any model of arithmetic.

\goodbreak
\begin{theorem}\label{Theorem.Projective-truth-is-expressible}
 Projective truth (second-order arithmetic truth) is expressible in the potentialist language of arithmetic $\Larith^{\xpossible}$. That is, for every projective formula $\varphi(x)$, there is a formula $\varphi^*(x)\in\Larith^{\xpossible}$ such that for any model of arithmetic $M\satisfies\PAp$ any real $a\in\SSy(M)$, coded by $\bar a\in M$, the projective assertion $\varphi(a)$ holds if and only if $M\satisfies\varphi^*(\bar a)$.
\end{theorem}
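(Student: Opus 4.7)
The plan is to build, by induction on the complexity of a projective formula $\varphi$, a translation $\varphi \mapsto \varphi^*$ into $\Larith^{\xpossible}$ so that each second-order (real) variable $X$ in $\varphi$ is replaced by a first-order ``code'' variable $\bar X$ in $\varphi^*$. The inductive invariant to be proved uniformly over all models $M\satisfies\PAp$ is: for all $A_0,A_1,\ldots\in\SSy(M)$ coded by $\bar A_0,\bar A_1,\ldots\in M$ and standard numbers $n_0,n_1,\ldots\in\N$, the projective assertion $\varphi(A_0,\ldots,n_0,\ldots)$ holds if and only if $M\satisfies\varphi^*(\bar A_0,\ldots,n_0,\ldots)$. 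The first-order fragment is already handled by the previous theorem, so the real work is in the second-order quantifier case.

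For atomic formulas and Boolean combinations the translation is the obvious one, with ``$n\in X$'' translated as the $\Delta_0$ decoding statement ``$n$ is a member of the set coded by $\bar X$.'' For number quantifiers, relativize to the standard cut using Theorem~\ref{Theorem.Standard-cut-definable}: translate $\exists x{\in}\N\,\psi$ as $\exists x\,(\upnecessary\Con(\PAp_x)\wedge \psi^*)$ and dually for $\forall$. For second-order quantification, which is the decisive step, translate
\[
  \bigl(\exists X\,\psi(X,\ldots)\bigr)^*\ :=\ \xpossible\,\exists \bar X\,\psi^*(\bar X,\ldots),\qquad \bigl(\forall X\,\psi(X,\ldots)\bigr)^*\ :=\ \xnecessary\,\forall \bar X\,\psi^*(\bar X,\ldots).
\]

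For correctness at the second-order step, I would argue as follows. In the forward direction, if $A\subseteq\N$ is a projective witness to $\exists X\,\psi(X,\ldots)$, apply compactness to the $\Delta_0$-diagram of $M$ together with $\PAp$ and the axioms ``$n\in c$'' or ``$n\notin c$'' for a fresh constant $c$ according as $n\in A$ or not (with $n$ ranging over the standard cut). This theory is finitely satisfiable, so there is an extension $N\supseteq M$ of $\PAp$ with some $\bar X=c^N$ coding $A$ in $\SSy(N)$. Parameters $\bar A_i$ from $M$ continue to code the same reals $A_i$ in $N$ because standard-cut membership in $\bar A_i$ is a $\Delta_0$ fact, hence absolute to extensions. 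By the induction hypothesis applied to $N$, $N\satisfies\psi^*(\bar X,\bar A_0,\ldots)$, and therefore $M\satisfies\xpossible\exists\bar X\,\psi^*$. Conversely, any such witness $\bar X$ in any extension $N$ decodes to a genuine $A\subseteq\N$ in $\SSy(N)$, and the induction hypothesis inside $N$ produces the desired projective witness.

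The main obstacle is formulating the induction hypothesis at the right level of generality. Since $\varphi^*$ contains modal operators and we reinterpret it in the extension $N$ (where the accessibility relation now looks at extensions of $N$), the hypothesis must apply uniformly to every model of $\PAp$ with every coding of parameters by elements of its standard system. The $\Delta_0$-absoluteness of the decoding of codes on the standard cut is what makes this uniform statement go through, both for parameters (which keep their projective meaning in extensions) and for newly introduced reals (where the decoding in $N$ yields an honest subset of $\N$). Once this is set up, the compactness realization lemma and a straightforward induction complete the argument.
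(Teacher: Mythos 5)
Your proposal is correct and follows essentially the same route as the paper's (much more compressed) argument: replace second-order quantifiers by $\xpossible\exists\bar X$ over codes in the standard system, relativize number quantifiers to the definable standard cut, and use compactness to realize any real in the standard system of an extension. Your explicit induction, with the observation that $\Delta_0$-absoluteness of the decoding keeps parameter codes meaningful across extensions, is exactly the detail the paper leaves to the reader.
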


In particular, the $\xpossible$-potentialist language of arithmetic can express whether a given binary relation is well-founded, whether there is a transitive model of \ZFC, whether $0^\sharp$ exists, whether $x^\sharp$ exists for every real $x$, whether $\omega_1$ is inaccessible in $L$, whether there is a transitive model of \ZFC\ with a proper class of Woodin cardinals, and more. The full potentialist language of arithmetic for the $\xpossible$ modality, therefore, is a very strong language. As a philosophical consequence, this refutes what might otherwise have been a natural initial expectation, namely, that arithmetic potentialists generally have only a weak capacity to express actual arithmetic truth. In fact, they can fully and accurately express second-order arithmetic truth.

\begin{proof}
The idea is to replace quantification over the reals $\exists x$ with potentialist number quantification $\xpossible\exists h$, regarding the number $h$ as coding a real in the standard system, so that we replace further references to $k\in x$ with $k\in h$, for standard $k$, and replace all number quantifiers with quantifiers over the standard part of the model, as in the previous theorem. The point is that by compactness any real can be added to the standard system of a model by moving to a suitable extension, even an elementary extension, and so quantifying over reals is the same as quantifying over the possible reals that could be coded in the standard system of the model.
\end{proof}

The result may be connected with the main result of~\cite{Visser2015:Semantic-completion-of-PA-is-complete}, which shows that the collection of possibly necessary sentences over \PA\ is $\Pi^1_1$-complete.

\begin{question}
 How expressive is the end-extensional potentialist language $\Larith^{\uppossible}$? Can it express arithmetic truth? Can it express projective truth? Is the halting problem definable in every model of arithmetic by a $\Larith^{\uppossible}$ formula?
\end{question}

I am inclined to think not, and the problem is how to show that $\Larith^{\uppossible}$ is weaker than $\Larith^{\xpossible}$.

\begin{question}\label{Question.Up-expressible-from-x?}
 Is $\uppossible$ expressible in the potentialist language $\Larith^{\xpossible}$?
\end{question}

In other words, in the extension-potentialist language $\Larith^{\xpossible}$, can we define a formula equivalent to $\uppossible\varphi(x)$?

\begin{question}
Is $\xpossible$ expressible from $\uppossible$?
\end{question}

\enlargethispage{20pt}%
Perhaps $\uppossible$ cannot define truth in the standard model, for abstract reasons. For example, perhaps every $\Larith^{\uppossible}$-definable set is computable from the oracle $0^{(\omega)}$, which would mean that projective truth would not be $\Larith^{\uppossible}$ expressible, leading to the conclusion that $\xpossible$ is not expressible from $\uppossible$. I am not sure.\goodbreak

Similar questions arise with essentially all the other modalities considered in this article. There is plenty of work here to occupy all of us for the future.

\section{The universal algorithm}\label{Section.Universal-algorithm}

The universal algorithm is an absolutely beautiful result due to W. Hugh Woodin, upon which much of the modal analysis of this article is based. The main fact is that there is a Turing machine program that can in principle enumerate any desired finite sequence of numbers, if only it is run in a suitable universe; and furthermore, in any model of arithmetic, one can realize any desired further extension of the enumerated sequence by moving to a taller model of arithmetic end-extending the previous one.

It will be convenient to use an enumeration model of Turing computability, by which we view a Turing machine program as providing a means to computably enumerate a list of numbers. We start a program running, and it generates a list of numbers, possibly empty, possibly finite, possibly infinite, possibly with repetition.

The history of the universal algorithm result (theorem \ref{Theorem.Universal-algorithm}) involves several instances of independent rediscovery of closely related or essentially similar results, alternative arguments and generalizations. Woodin's original theorem, for countable models only, appears in~\cite{Woodin2011:A-potential-subtlety-concerning-the-distinction-between-determinism-and-nondeterminism}, with further results and discussion by Blanck and Enayat~\cite{BlanckEnayat2017:Marginalia-on-a-theorem-of-Woodin, Blanck2017:Dissertation:Contributions-to-the-metamathematics-of-arithmetic}, including notably their extension of the result from countable to arbitrary models, as well as in a series of my blog posts~\cite{Hamkins.blog2016:Every-function-can-be-computable, Hamkins.blog2017:A-program-that-accepts-exactly-any-desired-set-in-the-right-universe, Hamkins.blog2017:The-universal-algorithm-a-new-simple-proof-of-Woodins-theorem}, the last of which provides a simplified proof of the theorem, which I provide below (and an earlier version of this article was posted on the Arxiv at \cite{Hamkins:The-modal-logic-of-arithmetic-potentialism}). It turns out that Volodya Shavrukov had advanced an essentially similar argument in an email message (February 16, 2012) to Ali Enayat and Albert Visser, under the slogan, ``On risks of accruing assets against increasingly better advice.'' Shavrukov has pointed out that this argument is closely related to and can be seen as an instance of the construction of Berarducci in~\cite{Berarducci1990:The-interpretability-logic-of-PA}, with Woodin's construction similarly seen (after the fact) as a certain instance of the general Berarducci-Japaridze~\cite{Japaridze1994:A-simple-proof-of-arithmetical-completeness-for-Pi11-conservativity-logic}
construction, a relation of part to whole, he says, ``a curious instance of the part growing more glamorous than the whole,'' in light of the broad appeal of Woodin's theorem.  Albert Visser has pointed out a similar affinity between the universal algorithm and the classical proof-theoretic `exile' argument (for example, see `refugee' in~\cite{ArtemovBeklemishev2004:Provability-logic}), for the universal algorithm is allowed to succeed at stage $n$ exactly when it finds a particular kind of proof that this will not be the last successful stage, just as the exile is allowed to enter a country only when he can also prove that he will eventually move on. Weaker incipient forms of the result are due to Mostowski~\cite{Mostowski1960:A-generalization-of-the-incompleteness-theorem} and Kripke~\cite{Kripke1962:Flexible-predicates-of-formal-number-theory}. Meanwhile, in current joint work, Woodin and I have provided a set-theoretic analogue of the result in~\cite{HamkinsWoodin:The-universal-finite-set}, and in joint work with Kameryn Williams~\cite{HamkinsWilliams2021:The-universal-finite-sequence}, we have a $\Sigma_1$-definable analogue for end-extensional set-theoretic potentialism, including models of $\ZFC^-$, and now also the $\Sigma_n$ analogues in \cite{Hamkins2024:Every-countable-model-of-arithmetic-or-set-theory-has-a-pointwise-definable-end-extension}.

\begin{theorem}\label{Theorem.Universal-algorithm} Suppose that $\PAp$ is a consistent c.e.~theory extending \PA. Then there is a Turing machine program $e$ with the following properties.
  \begin{enumerate}
    \item $\PA$ proves that the sequence enumerated by $e$ is finite.
    \item For any model $M$ of $\PAp$ in which $e$ enumerates a finite sequence $s$, possibly nonstandard, and any $t\in M$ extending $s$, there is an end-extension of $M$ to a model $N\satisfies\PAp$ in which $e$ enumerates exactly $t$.
    \item In the standard model $\N$ the program enumerates the empty sequence.
    \item Consequently, for every finite sequence $s$, there is a model of $\PAp$ in which the program enumerates exactly $s$.
  \end{enumerate}
\end{theorem}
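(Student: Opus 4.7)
The plan is to construct $e$ via the Kleene recursion theorem as a self-referential program whose outputs are controlled by $\PAp$-proofs about $e$'s own behavior, and then deduce the four clauses by analyzing $e$'s search inside a given model together with Henkin's construction. By the recursion theorem, define $e$ to be the Turing program that, at each stage $n$ with current output sequence $s=\langle a_0,\ldots,a_{n-1}\rangle$, systematically searches through pairs $\langle k,p\rangle$ in the natural order, testing whether $p$ codes a $\PAp$-proof of the arithmetic sentence $\neg\Psi(s\concat\langle k\rangle)$, where $\Psi(x)$ expresses ``$e$'s final enumerated sequence equals exactly $x$''. Upon finding the least such pair, $e$ outputs $k$ and proceeds to stage $n+1$.

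For (1) and (3), if $e$ were to output a value at any stage, the witnessing $\PAp$-proof of $\neg\Psi(s\concat\langle k\rangle)$ would need to coexist with $e$'s observable further behavior, which produces a $\Sigma_1$-fact refuting that proof. In the standard model $\N$, where $\PAp$-proofs are genuine and $\PAp$ is $\Sigma_1$-sound, these two constraints cannot both hold, so $e$ outputs nothing, proving (3). Reflecting this reasoning into $\PA$ via Mostowski's reflection theorem (Theorem~\ref{Theorem.Mostowski-reflection-theorem}), which yields $\PA\vdash\Con(\PAp_k)$ at each standard $k$, one concludes in $\PA$ that $e$'s output sequence must be finite, proving (1).

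The heart of the theorem is clause (2). Given $M\models\PAp$ with $e$ enumerating $s$ in $M$ and $t\in M$ extending $s$, Lemma~\ref{Lemma.Possibility-characterization} reduces the goal to showing $M\models\Con(\PAp_k+\Psi(t))$ for every standard $k$. I would prove this by induction along the chain $s=s^{(0)}\subsetneq s^{(1)}\subsetneq\cdots\subsetneq s^{(m)}=t$ obtained by appending one value of $t$ at a time. The base case holds because $M\models\Psi(s)$, and so Lemma~\ref{Lemma.Possibility-characterization} yields $M\models\Con(\PAp_k+\Psi(s))$. For the inductive step, suppose $M\models\Con(\PAp_k+\Psi(s^{(j)}))$ and toward contradiction $M\models\neg\Con(\PAp_k+\Psi(s^{(j+1)}))$, where $s^{(j+1)}=s^{(j)}\concat\langle c_j\rangle$. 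Then $M$ witnesses a $\PAp_k$-proof of $\neg\Psi(s^{(j)}\concat\langle c_j\rangle)$, precisely the shape of sentence that $e$'s search targets at stage $|s^{(j)}|$ with sequence $s^{(j)}$. Consequently, in any model of $\PAp_k+\Psi(s^{(j)})$, the algorithm $e$ at stage $|s^{(j)}|$ discovers this proof (or an earlier witness with some $k'\leq c_j$) and outputs a value, making its output strictly longer than $s^{(j)}$ and contradicting $\Psi(s^{(j)})$; thus $\PAp_k+\Psi(s^{(j)})$ would be inconsistent, contradicting the inductive hypothesis. With consistency established at $j=m$, Henkin's construction inside $M$ produces the desired end-extension $N\models\PAp+\Psi(t)$, in which $e$'s execution trace makes $e$ enumerate exactly $t$.

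Clause (4) follows immediately from (3) and (2): $\N\models\PAp$ with $e$ enumerating the empty sequence by (3), and applying (2) to $M=\N$ with target $t=s$ produces the desired model. The most delicate steps are the self-referential construction itself — ensuring the sentence $\Psi(x)$ correctly refers back to $e$, for which the Kleene recursion theorem is essential — together with the finiteness argument in (1), where one must carefully match the bounded complexity of the search at stage $n$ against the levels at which $\PA$ proves the consistency of $\PAp_k$.
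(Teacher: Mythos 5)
Your construction omits the single device that makes clause (1) true: the requirement that the proof triggering each successive stage come from a strictly smaller fragment of the theory. In the paper's algorithm, stage $n$ succeeds only upon finding a proof in some $\PAp_{k_n}$ with $k_n$ strictly below all earlier $k_i$, and the finiteness of the enumerated sequence is then provable in $\PA$ simply because a strictly decreasing sequence of natural numbers is finite. Your $e$ instead searches for proofs in the full theory $\PAp$ at every stage, and for such a program clause (1) is false: in any model $M\satisfies\PAp+\neg\Con(\PAp)$ (which exists by the second incompleteness theorem), $M$ contains a $\PAp$-proof of $\neg\Psi(s\concat\<k>)$ for every $s$ and $k$, so your search succeeds at every stage and $e$ enumerates an infinite sequence in $M$; hence $\PA$ cannot prove the sequence finite. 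Your stated justification for (1)---``reflecting'' the standard-model argument into $\PA$ via Mostowski's theorem---cannot be repaired, because Mostowski reflection is a schema with one instance per standard $k$ and yields no internal bound on the number of stages; the finiteness must be built into the algorithm itself, which is exactly what the descending $k_n$ accomplish.

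There is a second, independent issue. Your algorithm emits one number per stage, whereas the paper's emits a whole (possibly nonstandard) batch $s$ at each successful stage. Once the descending fragments are restored to fix (1), the one-at-a-time format caps the total length of the output by $k_0$, so clause (2) fails for targets $t$ of sufficiently large nonstandard length---the paper makes precisely this point about its ``one-at-a-time'' variant $\hat e$. To recover the full extension property you must either have each successful stage release an explicitly listed finite sequence (as in the paper's proof, where the refuted statement is ``$e$ does not enumerate the elements of $s$ at stage $n$ as its last successful stage''), or else reinterpret each emitted number as coding a block to be concatenated. Your inductive argument for (2) also silently assumes the chain $s=s^{(0)}\subsetneq\cdots\subsetneq s^{(m)}=t$ has standard length; since $t\in M$ may extend $s$ by nonstandardly many entries, that induction would have to be carried out internally in $M$, whereas the paper avoids the induction entirely by asking for consistency of the single statement that stage $n$ is the last successful one and enumerates the whole remaining block of $t$ at once.
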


\begin{proof} This is my simplified argument. I shall describe an algorithm $e$ that enumerates a sequence of numbers, releasing them in batches at successful stages. Stage $n$ is successful, if all earlier stages are successful and there is a proof in a theory fragment $\PAp_{k_n}$, with $k_n$ strictly smaller than all earlier $k_i$ for $i<n$, of a statement of the form, ``it is not the case that program $e$ enumerates the elements of the sequence $s$ at stage $n$ as its last successful stage'' where $s$ is some explicitly listed sequence of numbers. In this case, for the first such proof that is found, the algorithm $e$ adds the numbers appearing in $s$ to the sequence it is enumerating, and then continues, trying for success in the next stage. The existence of such a program $e$, which iteratively searches for such proofs about itself, follows from the Kleene recursion theorem.

Since the numbers $k_n$ are descending as the algorithm proceeds, there can be only finitely many successful stages and so the program will enumerate altogether a finite sequence. So statement (1) holds.

If stage $n$ is successful, it is because of a certain proof found using axioms from the theory $\PAp_{k_n}$. I claim that $k_n$ must be nonstandard. The reason is that if $n$ is the last successful stage and $k$ is any standard number, then by the Mostowski reflection theorem (theorem~\ref{Theorem.Mostowski-reflection-theorem}), the theory $\Tr_k$ is consistent, and for $k\geq 2$ this would include the assertion that $e$ enumerated exactly the sequence that it did enumerate in $M$ with exactly those stages being successful. So there can be no proof from $\PAp_k$ in $M$ that this was not the case. So every $k_n$ is nonstandard.

It follows that there can be no successful stage in the standard model, so the program $e$ enumerates the empty sequence in the standard model $\N$, fulfilling statement (3). Statement (4) follows from this, since if there were some finite sequence $s$ unrealized in models of $\PAp$, then this would be provable and so there would be a successful stage in the standard model, contrary to what we just observed.

For the extension property of statement (2), suppose that $M\satisfies\PAp$ and program $e$ enumerates the (possibly nonstandard) finite sequence $s$ in $M$ and $t$ is an arbitrary finite sequence in $M$ extending $s$. Let $n$ be the first unsuccessful stage of $e$ in $M$. I may assume $M$ is nonstandard by moving to an elementary end-extension if necessary. Let $k$ be any nonstandard number in $M$ that is smaller than all $k_i$ for $i<n$. Since stage $n$ was not successful, there must not be any proof in $M$ from $\PAp_k$ refuting the assertion that $n$ is the last successful stage and enumerates exactly the rest of the elements of $t$ beyond $s$ at stage $n$. In other words, $M$ must think that it is consistent with $\PAp_k$ that $e$ does have exactly $n$ successful stages and enumerates exactly the rest of $t$ at stage $n$. If $T$ is the theory asserting this, then we may build the Henkin model of $T$ inside $M$, and this model will provide an end-extension of $M$ to a model $N$ that satisfies $\PAp_k$ in which $e$ enumerates exactly the sequence $t$ altogether. Since $k$ is nonstandard, the theory $\PAp_k$ includes the entire standard theory $\PAp$, and so I have found the desired extension to fulfill statement (2).
\end{proof}

Let me now describe an alternative related algorithm, what I call the \emph{one-at-a-time} universal algorithm. Namely, program $\hat e$ will enumerate a finite sequence of numbers $a_0$, $a_1$ and so on, adding just one number at each successful stage, but only finitely many numbers will be enumerated. The number $a_n$ is defined, if the earlier numbers have already been enumerated and there is a proof from $\PAp_{k_n}$, with $k_n$ strictly smaller than $k_i$ for $i<n$, of a statement of the form, ``program $\hat e$ does not enumerate number $a$ at stage $n$ as $a_n$ as its next and last number.'' Since the $k_n$ are descending, there will be only finitely many numbers enumerated. If $\hat e$ enumerates $s$ in $M\satisfies\PAp$, then by the Henkin theory argument, we can add any desired next number as $a_n$ in some end-extension $N$ of $M$, with only that number added, where $n$ was the first undefined number in $M$. In other words, for the one-at-a-time universal algorithm, we weaken the extension property of statement (2) to achieve only that the sequence can be extended to add any desired next number (only one), rather than an arbitrary sequence of numbers. Indeed, the one-at-a-time algorithm $\hat e$ will not have the full extension property, since once $a_0$ is defined, then the total length of the enumerated sequence will be bounded by $k_0$, so we cannot extend to arbitrary nonstandard lengths.

Nevertheless, the one-at-a-time universal algorithm is fully general, in the sense that we can interpret the individual numbers on its sequence each as a finite sequence of numbers, to be concatenated. That is, from the enumerated sequence $a_0,a_1,\ldots,a_n$ we derive a corresponding concatenated sequence $s_0\concat s_1\concat\cdots\concat s_n$, where $s_k$ is the finite sequence coded by $a_k$. One thereby easily achieves the full extension property for this derived sequence, since if you can always add any desired individual next number $a_{n+1}$ to the sequence enumerated by $\hat e$, then you can always add any desired (possibly nonstandard) finite sequence $s_{n+1}$ to the derived concatenated sequence. Indeed, this is exactly how the universal algorithm is derived from the one-at-a-time universal algorithm. In this way, one sees that the full extension property reduces to the one-at-a-time extension property and indeed the two notions are essentially equivalent, in the sense that we can transform any one-at-a-time universal sequence into a fully universal sequence. Meanwhile, in certain circumstances the one-at-a-time sequence is simpler to consider.

Statement (3) of theorem~\ref{Theorem.Universal-algorithm} generalizes to the following.

\begin{theorem}
 In any model $M\satisfies\PAp$ that thinks $\PAp$ is $\Sigma_1$-sound, the universal algorithm $e$ of the proof of theorem~\ref{Theorem.Universal-algorithm} enumerates the empty sequence.
\end{theorem}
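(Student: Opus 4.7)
The plan is to argue by contradiction. Suppose toward a contradiction that $M\satisfies\PAp$ thinks $\PAp$ is $\Sigma_1$-sound but that $e$ nevertheless enumerates a non-empty sequence in $M$. Since the indices $k_n$ at the successful stages form a strictly descending sequence of numbers in $M$, only finitely many stages can be successful in $M$; let $n$ be the last successful stage, with $s_n$ the sequence added there.

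The key move is to decompose the assertion whose negation is proved at stage $n$. Write $A(n,s_n)$ for ``$e$ enumerates $s_n$ at stage $n$ as its last successful stage,'' and factor $A = B\wedge C$, where $B$ is the $\Sigma_1$ assertion that stages $0,1,\ldots,n$ are all successful with the specific data (the $k_i$'s, $s_i$'s, and proofs $p_i$) that $e$ actually exhibits in $M$, and stage $n$ enumerates $s_n$; and where $C$ is the $\Pi_1$ assertion ``no stage $m>n$ is successful.'' Both $B$ and $C$ are true in $M$ by the choice of $n$. On the one hand, the successful proof discovered at stage $n$ establishes inside $M$ that $\PAp\proves\neg A$, i.e., $\PAp\proves\neg B\vee\neg C$. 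On the other hand, by internal $\Sigma_1$-completeness (a theorem of $\PA$), the truth of the $\Sigma_1$ sentence $B$ in $M$ gives $M\satisfies\PAp\proves B$. Combining these two facts inside the provability predicate via modus ponens yields $M\satisfies\PAp\proves\neg C$.

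Now the contradiction is immediate. The sentence $\neg C$ is the $\Sigma_1$ assertion ``some stage $m>n$ is successful,'' and since $M$ thinks $\PAp$ is $\Sigma_1$-sound, we obtain $M\satisfies\neg C$, contradicting the choice of $n$ as the last successful stage. Hence $e$ can have no successful stages in $M$ and so enumerates the empty sequence there. The main subtlety—and the one step warranting care—is the complexity bookkeeping: verifying that $B$ is genuinely $\Sigma_1$ in $M$ even when its witnesses ($k_i$'s and proofs $p_i$) are nonstandard, and that $C$ really is $\Pi_1$, so that the offending false sentence $\neg C$ which $\PAp$ has been coaxed into proving sits precisely at the $\Sigma_1$ level where the soundness hypothesis can reject it.
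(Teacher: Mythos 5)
Your proof is correct and follows essentially the same route as the paper's: both extract the false $\Sigma_1$ consequence that stage $n+1$ is successful, show it is provable from $\PAp$ inside $M$, and invoke $M$'s belief in $\Sigma_1$-soundness for the contradiction. You are merely more explicit---naming the decomposition $A = B\wedge C$ and citing internal $\Sigma_1$-completeness for $B$---where the paper observes informally that proofs of the $\Sigma_1$ part ``can be constructed from the actual computation itself in $M$.''
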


\begin{proof}
 If $e$ has a successful stage in $M$, then it has a last successful stage $n$, and this stage is successful because of a certain proof in $M$ that $e$ does not have exactly that many successful stages, enumerating exactly some explicitly listed sequence $s$ at stage $n$. But of course, $M$ does have proofs that the stages up to and including $n$ are successful and that it enumerates exactly the sequences that it does enumerate at those stages, since such proofs can be constructed from the actual computation itself in $M$. It follows that in $M$, there is a proof that stage $n+1$ will be successful, even though it won't actually be successful in $M$. This is a $\Sigma_1$ assertion that is provable from $\PAp$ in $M$ but not true in $M$, and so the theory is not held to be $\Sigma_1$-sound in $M$.
\end{proof}

Of course, $\Sigma_1$-soundness is a strengthening of $\Con(\PAp)$, and in any model of $\neg\Con(\PAp)$ the algorithm will find proofs and therefore have a successful stage. Blanck and Enayat~\cite{BlanckEnayat2017:Marginalia-on-a-theorem-of-Woodin} prove that their version of the universal algorithm, as well as Woodin's original algorithm, has the property that it enumerates a nonempty sequence if and only if $\neg\Con(\PAp)$. It seems that the argument can be made to work also for the algorithm here.

I find it interesting to consider theories such as the following. By the \Godel-Carnap fixed point lemma, there is a sentence $\sigma$ asserting ``the universal algorithm $e$, relative to the theory $\PAp=\PA+\sigma$, has at least one successful stage.'' If this theory were inconsistent, then in the standard model the theory $\PAp$ would prove everything, and this would cause a successful stage in the corresponding universal algorithm, which would make the sentence $\sigma$ itself and hence also $\PAp$ true in the standard model, contrary to assumption. So the theory is consistent. But it cannot actually be true in the standard model, since in that case the algorithm $e$ would have to have no successful stages in $\N$ by statement (3) of theorem~\ref{Theorem.Universal-algorithm}, contrary to the presumed truth of $\sigma$ in $\N$. Note that this theory has no models in which the algorithm is never successful, because that is precisely what $\sigma$ asserts not to occur.

\goodbreak
\begin{corollary}\label{Corollary.Universal-infinite-sequence}
Let $e$ be the universal algorithm of theorem~\ref{Theorem.Universal-algorithm}. Then\nobreak
    \begin{enumerate}
      \item For any infinite sequence of natural numbers $\<a_0,a_1,a_2,\ldots>$, there is a model $M$ of $\PAp$ in which program $e$ enumerates a nonstandard finite sequence extending it.
      \item If $M$ is any model of $\PAp$ in which program $e$ enumerates some (possibly nonstandard) finite sequence $s$, and $S$ is any $M$-definable infinite sequence extending $s$, then there is an end-extension of $M$ satisfying $\PAp$ in which $e$ enumerates a sequence starting with $S$.
    \end{enumerate}
\end{corollary}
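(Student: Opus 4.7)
\smallskip
\noindent\textbf{Plan.} Both parts will proceed by leveraging the extension properties of $e$ established in theorem~\ref{Theorem.Universal-algorithm}: part~(1) through a straightforward compactness argument, and part~(2) through the MacDowell--Specker theorem followed by a single application of the extension property.

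For statement~(1), I would consider the first-order theory
\[
T = \PAp \cup \{\varphi_n : n \in \N\}
\]
in the language of arithmetic, where $\varphi_n$ asserts that the sequence enumerated by $e$ has length greater than $n$ and its $i$-th entry equals $a_i$ for each $i \leq n$. A finite subset of $T$ involves only $\varphi_0, \ldots, \varphi_{n^*}$ for some $n^*$, and by statement~(4) of theorem~\ref{Theorem.Universal-algorithm} there is a model of $\PAp$ in which $e$ enumerates exactly $\<a_0, \ldots, a_{n^*}>$, realizing that fragment. Compactness then provides a model $M \satisfies T$. Statement~(1) of the theorem ensures that the enumerated sequence is finite in $M$, but its length exceeds every standard $n$, so it must be nonstandard, yielding the desired nonstandard finite extension of $\<a_0, a_1, a_2, \ldots>$.

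For statement~(2), the plan is to combine the MacDowell--Specker theorem with statement~(2) of theorem~\ref{Theorem.Universal-algorithm} so as to create enough room above $M$ to accommodate all of $S$. First apply MacDowell--Specker to $M \satisfies \PAp$ to obtain a proper elementary end-extension $M \elesub M^+$ with $M^+ \satisfies \PAp$, and pick any $n^* \in M^+ \setminus M$; the end-extension property places $n^*$ above every element of $M$. By elementarity, $e$ still enumerates the same sequence $s$ in $M^+$, and the formula defining $S$ defines in $M^+$ an extension $S^+$ that agrees with $S$ on every index in $M$. Setting $t = S^+ \restrict n^* \in M^+$, which extends $s$, statement~(2) of theorem~\ref{Theorem.Universal-algorithm} applied inside $M^+$ yields an end-extension $N \satisfies \PAp$ of $M^+$ in which $e$ enumerates exactly $t$. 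By transitivity of end-extension, $N$ is an end-extension of $M$, and since $t(n) = S(n)$ for every $n \in M$, the enumeration by $e$ in $N$ begins with all of $S$.

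The main obstacle is that theorem~\ref{Theorem.Universal-algorithm}(2) only permits extension to sequences $t \in M$, whose length must already be an element of $M$, whereas covering all of $S$ requires the enumerated sequence in the target model to have length above every element of $M$. The preliminary MacDowell--Specker extension addresses precisely this by producing a model $M^+$ with room above $M$, and full elementarity is used to guarantee that $S^+$ agrees with $S$ on $M$'s indices regardless of the logical complexity of the defining formula for $S$.
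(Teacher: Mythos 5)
Your argument is correct, and part~(1) is essentially the paper's own proof: a routine external compactness argument using statement~(4) of theorem~\ref{Theorem.Universal-algorithm} to witness finite satisfiability.

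For part~(2), however, you take a genuinely different route. The paper handles it by running an \emph{internal} compactness argument: inside $M$ one forms the definable theory asserting $\PAp_k$ (for a suitable nonstandard fragment $k$) together with ``$e$'s output agrees with $S$,'' observes via the extension property's consistency calculation that this theory has no inconsistency proof in $M$, and then takes the Henkin model \emph{inside} $M$ to get the end-extension directly. You instead invoke MacDowell--Specker to first produce a proper elementary end-extension $M \elesub M^+$, pick $n^* \in M^+ \setminus M$ sitting above all of $M$, and then make a \emph{single} external application of theorem~\ref{Theorem.Universal-algorithm}(2) to $t = S^+ \restrict n^*$. This is clean and correct: elementarity ensures $e$ still enumerates $s$ in $M^+$ and that $S^+$ agrees with $S$ on $M$, so the resulting $N$ does what you want, and transitivity of end-extension closes the gap. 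The trade-off is instructive. Your version is more modular---everything reduces to a single call of the already-proved extension property, and you correctly identify that the only real obstacle (getting a length above all of $M$) is exactly what MacDowell--Specker supplies; it also handles the standard-model case uniformly, where the paper punts to part~(1). The paper's version is more self-contained in the sense that it re-uses the same arithmetized-Henkin technique that drives the rest of the section and does not import MacDowell--Specker, which is itself a nontrivial theorem about models of $\PA$. Both are valid proofs.
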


\begin{proof}
For statement (1), fix the sequence $\<a_0,a_1,a_2,\ldots>$. By a simple compactness argument, there is a model $M$ of $\PAp$ in which program $e$ enumerates exactly $a_n$ as its $n^{th}$ element.

For statement (2), if $e$ enumerates $s$ in $M$, a model of $\PAp$, and $S$ is an $M$-infinite sequence definable in $M$ and extending $s$, then apply such a compactness argument inside $M$ using a nonstandard fragment $\PAp_k$. (If $M$ is the standard model, apply statement (1) instead.)
\end{proof}

Corollary~\ref{Corollary.Universal-infinite-sequence} shows the sense in which every function on the natural numbers can become computable, and indeed, computed always by the same universal program, if only the program is run inside the right model of arithmetic.

\begin{observation}\label{Observation.Necessary-but-not-Sigma1}
 There is a statement $\psi$ that is independent of $\PA$ and not provably equivalent to any $\Sigma_1$ assertion, such that $\psi$ can become necessarily true in all end-extensions of a model of $\PA$.
\end{observation}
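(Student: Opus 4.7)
The plan is to exhibit $\psi$ as a natural $\Pi_1$ statement about the universal algorithm $e$ of Theorem~\ref{Theorem.Universal-algorithm} (taking $\PAp=\PA$). The key phenomenon is that the truth of $\psi$ over a carefully chosen model will be driven by a true $\Sigma_1$ witness -- a completed computation of $e$ living in that model -- even though $\psi$ itself escapes any provable $\Sigma_1$-equivalence. This is exactly the situation anticipated by the remark following Lemma~\ref{Lemma.Necessity-characterization}.

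Specifically, take
$$\psi \;\equiv\; \forall a\,\forall t\;\bigl(a\neq 7 \,\to\, \neg\bigl(t \text{ witnesses that } e \text{ enumerates } a \text{ at position } 0\bigr)\bigr),$$
a $\Pi_1$ sentence asserting that the first entry of $e$'s output, if any exists, equals $7$. For the main clause, apply Theorem~\ref{Theorem.Universal-algorithm}(4) to choose a model $M\satisfies\PA$ in which $e$ enumerates exactly the sequence $\langle 7\rangle$. The $\Sigma_1$ sentence $\sigma\equiv$ ``$e$ enumerates $7$ at position $0$'' then lies in $\tp_{\Sigma_1}^M$, and since $\PA$ proves the determinism of $e$ (at most one value is ever enumerated at a given position of the output sequence), we have $\PA\vdash\sigma\to\psi$, and hence $\PA+\tp_{\Sigma_1}^M\vdash\psi$. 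By Lemma~\ref{Lemma.Necessity-characterization}(3), this yields $M\satisfies\upnecessary\psi$, so $\psi$ has become necessarily true in all end-extensions of $M$.

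For independence from $\PA$: in the standard model $\N$, Theorem~\ref{Theorem.Universal-algorithm}(3) gives that $e$ enumerates the empty sequence, so $\psi$ holds vacuously in $\N$ and hence $\PA\not\vdash\neg\psi$; conversely, applying Theorem~\ref{Theorem.Universal-algorithm}(4) to the sequence $\langle 5\rangle$ produces a model of $\PA$ in which $\psi$ fails, so $\PA\not\vdash\psi$.

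For the claim that $\psi$ is not provably equivalent to any $\Sigma_1$ sentence -- the step I expect to be the most delicate to get right -- suppose toward contradiction that $\PA\vdash\psi\iff\rho$ for some $\Sigma_1$ sentence $\rho$. Since $\psi$ is true in $\N$, so is $\rho$, and by $\Sigma_1$-completeness of $\PA$ we obtain $\PA\vdash\rho$, hence $\PA\vdash\psi$, contradicting independence. The only genuine subtlety in the construction is choosing $\psi$ to be a true $\Pi_1$ ``shadow'' of a $\Sigma_1$ witness $\sigma$ driving its necessity; the $\Sigma_1$-completeness argument then rules out $\Sigma_1$-equivalence automatically from the independence of $\psi$ together with the fact that $\psi$ is satisfied in $\N$.
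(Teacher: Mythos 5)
Your proof is correct and takes essentially the same approach as the paper: both exhibit $\psi$ as a $\Pi_1$ sentence pinning down the first entry of the universal finite sequence, use the empty-sequence model and a non-conforming model to get independence, and get necessity from the fact that the first entry cannot change in end-extensions. The only cosmetic difference is in the non-$\Sigma_1$-equivalence step, where you invoke $\Sigma_1$-completeness of $\PA$ over $\N$ rather than the paper's observation that $\psi$ can pass from true to false under end-extension, but these are interchangeable one-line arguments.
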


\begin{proof}
 Let $\psi$ be the assertion, ``the sequence enumerated by the universal algorithm $e$ for the theory $\PA$ does not begin with the number $17$.'' This statement is independent of $\PA$, since theorem~\ref{Theorem.Universal-algorithm} shows that there are models in which it is true and models in which it is false. It is a $\Pi_1$ assertion, since the negation asserts that the sequence does begin with $17$, which is a $\Sigma_1$ assertion; and it cannot be provably equivalent to any $\Sigma_1$ assertion, since it is true in any model where the universal sequence is empty, but false in an end-extension where the universal algorithm sequence does begin with $17$. In any model $M$ where the universal sequence is nonempty but does not begin with $17$, however, then $\psi$ is necessary, since in no end-extension will it begin with $17$ if it didn't already. So there are models with $M\satisfies\upnecessary\psi$ even though $\psi$ is not provably $\Sigma_1$.
\end{proof}

In confirmation of my remarks after lemma~\ref{Lemma.Necessity-characterization}, however, note that in the model $M$ where $\upnecessary\psi$ holds, there is a true $\Sigma_1$ statement that provably implies $\psi$, namely, the assertion that the universal algorithm is not empty and begins with something other than $17$.

\subsection{Generalization to $\Sigma_n$-elementary end-extensions}

I would like now to generalize the universal algorithm argument to the case of $\Sigma_n$-elementary end-extensions. I shall use this theorem when analyzing the potentialist validities of $\uppossible_n$ and $\xpossible_n$ in section~\ref{Section.Other-forms-of-arithmetic-potentialism}.

\goodbreak
\begin{theorem}\label{Theorem.Universal-algorithm-Sigma-n}
Suppose that $\PAp$ is a consistent c.e.~theory extending \PA, and $n$ is any standard-finite natural number. Then there is an oracle Turing machine program $\tilde e$ with the following properties.
  \begin{enumerate}
    \item $\PA$ proves that the sequence enumerated by $\tilde e$, with any oracle, is finite.
    \item For any model $M$ of $\PAp$ in which $\tilde e$, using oracle $0^{(n)}$ of $M$, enumerates a finite sequence $s$, possibly nonstandard, and any $t\in M$ extending $s$, there is a $\Sigma_n$-elementary end-extension of $M$ to a model $N\satisfies\PAp$ in which $\tilde e$, with oracle $0^{(n)}$ of $N$, enumerates exactly $t$.
    \item If $\N\satisfies\PAp$, then in the standard model $\N$ with the actual $0^{(n)}$, the program enumerates the empty sequence.
    \item Consequently, if $\N\satisfies\PAp$, then for every finite sequence $s$, there is a model of $\PAp$ in which the program, using oracle $0^{(n)}$ of the model, enumerates exactly $s$.
  \end{enumerate}
\end{theorem}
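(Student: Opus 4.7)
The approach is to generalize the construction of Theorem~\ref{Theorem.Universal-algorithm} in two ways: first, by giving the program access to the oracle $0^{(n)}$ so that it can effectively recognize $\Sigma_n$ sentences in any model; and second, by searching for proofs in the stronger theory $\Tr_n+\PAp_{k_n}$ rather than just in $\PAp_{k_n}$. The descending-$k_n$ device and the Mostowski-style reflection argument both survive this generalization, and the $\Sigma_n$-elementarity of the end-extension is supplied by the Henkin-with-$\tau$ technique already used earlier in the article for the $\Sigma_n$-version of the standard-cut-definability theorem.

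First I would use Kleene's recursion theorem to define $\tilde e$ as follows. At stage $n$, with access to $0^{(n)}$, the program searches for the first proof from $\Tr_n+\PAp_{k_n}$, where $k_n$ is strictly smaller than all preceding $k_i$, of an assertion of the form ``$\tilde e$ with oracle $0^{(n)}$ does not enumerate the sequence $s$ at stage $n$ as its last successful stage.'' When such a proof is found, $\tilde e$ enumerates the elements of $s$ and proceeds to the next stage. Since the $k_n$ strictly descend, only finitely many stages can succeed, giving statement~(1). For statement~(3), if $\N\satisfies\PAp$ and $\tilde e$ had any successful stages in $\N$ with the actual $0^{(n)}$, then the Mostowski reflection theorem (Theorem~\ref{Theorem.Mostowski-reflection-theorem}) guarantees $\Con(\Tr_n+\PAp_{k_n})$ with the actual behavior of $\tilde e$ as part of $\Tr_n$, contradicting the existence of the certifying proof. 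The same argument shows that in any model the witnessing $k_n$ must be nonstandard.

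The main step is the extension property~(2). Suppose $M\satisfies\PAp$ and $\tilde e$ with oracle $0^{(n)}$ of $M$ enumerates $s$ in $M$, and fix $t\in M$ extending $s$. Let $n^*$ be the first unsuccessful stage in $M$ and choose $k\in M$ nonstandard with $k$ smaller than each $k_i$ for $i<n^*$. Failure of stage $n^*$ means $M$ believes $\Tr_n+\PAp_k+T$ is consistent, where $T$ is the theory asserting ``$\tilde e$ has exactly $n^*$ successful stages and enumerates exactly the missing elements of $t$ at stage $n^*$.'' Following the earlier $\Sigma_n$-cut argument, I would pass to an elementary end-extension $M^+$ of $M$, let $\tau$ be a nonstandard-finite conjunction of elements of $\Tr_n^{M^+}$ long enough to include every assertion of $\Tr_n^M$, and build inside $M^+$ a Henkin model $N$ of $\tau+\PAp_k+T$. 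Then $N$ end-extends $M$, $N\satisfies\PAp$ since $k$ is nonstandard, and $M\elesub_{\Sigma_n}N$ because the conjuncts of $\tau$ include all of $\Tr_n^M$. Statement (4) then follows from (2) and (3) as in the original theorem.

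The step I expect to be the main obstacle is verifying that inside the constructed $N$, the actual oracle-relative computation of $\tilde e$ with $0^{(n)}$ as reinterpreted in $N$ really does produce the sequence $t$ specified by the Henkin theory $T$. This requires coordinating the syntactic specification of $\tilde e$'s stage-by-stage behavior, the model's own computation of its $0^{(n)}$, and the $\Sigma_n$-truth captured by $\tau$. The resolution is that the proofs consulted by $\tilde e$ are formally about the behavior of the program relative to the definable oracle, so that $T$ fixes this behavior inside $N$; the $\Sigma_n$-elementarity supplied by $\tau$ ensures $N$ correctly evaluates the $\Sigma_n$-facts that the oracle encodes, so the program's execution in $N$ matches its prescribed trace.
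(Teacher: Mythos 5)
Your proof is essentially the same as the paper's: you modify the universal algorithm to search for proofs from $\Tr_n+\PAp_{k_n}$ using the oracle $0^{(n)}$, keep the strictly descending $k_n$ to guarantee finiteness, use Mostowski reflection for the standard-model and nonstandard-$k$ claims, and obtain $\Sigma_n$-elementarity by arranging that the new model satisfies the $\Sigma_n$-truth $\Tr_n$ of the base model, so that $0^{(n)}$ persists and the earlier stages of the run are unchanged. The one place you deviate is the detour through an elementary end-extension $M^+$ and a nonstandard-finite conjunction $\tau$ of $\Tr_n^{M^+}$. The paper does not take that step here: since $M$ already sees directly (from the failure of the stage to be successful) that $\Tr_n^M+\PAp_k+T$ is consistent, it builds the Henkin model inside $M$ itself from that $M$-definable theory. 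The $M^+$-and-$\tau$ device was needed in the earlier $\Sigma_n$-version of the standard-cut theorem, where consistency was instead manufactured by applying the internal incompleteness theorem to an $M^+$-finite theory; in the present argument it is superfluous, though harmless---by elementarity $M^+$ still believes $\Tr_n^{M^+}+\PAp_k+T$ consistent, hence so is the subtheory $\tau+\PAp_k+T$, and the resulting $N$ end-extends $M$ and satisfies $\Tr_n^M$ just as required. Your closing worry about whether the Henkin specification $T$ actually pins down $\tilde e$'s trace in $N$ is resolved exactly as you say and as the paper does: $\Sigma_n$-elementarity makes $N$'s oracle $0^{(n)}$ agree with $M$'s on $M$'s elements, so the earlier stages compute identically, and $N\satisfies T$ then forces the final stage.
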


\begin{proof} I modify the proof of theorem~\ref{Theorem.Universal-algorithm} by searching for proofs not in $\PAp$ but in the theory $\Tr_n+\PAp$, where $\Tr_n$ is the theory of $\Sigma_n$ truth, which is computable from the oracle $0^{(n)}$. This theory is of course no longer computably enumerable, but it is uniformly computable for standard $n$ from the oracle $0^{(n)}$ in any model of arithmetic.

So the universal program $\tilde e$ here searches for a proof in $\Tr_n+\PAp_k$, using the oracle $0^{(n)}$ to get access to $\Tr_n$ and insisting on a strictly smaller fragment $k$ each time (while $n$ is fixed), that program $\tilde e$, when equipped with oracle $0^{(n)}$, does not enumerate a certain specifically listed sequence of numbers, and when found, it enumerates that sequence anyway.

The proof of theorem~\ref{Theorem.Universal-algorithm} adapts easily to this new context. Since the theory fragment $\PAp_k$ is descending, there will be only finitely many successful stages and so the enumerated sequence will be finite. For the extension property, suppose that the program $\tilde e$ enumerates a sequence $s$ in a model $M\satisfies\PAp$, with stage $m$ the first unsuccessful stage, and that $t$ is a finite extension of $s$ in $M$. All the numbers $k_i$ for $i<m$, if any, are nonstandard by the same argument as before, appealing to the Mostowski reflection theorem. Let $k$ be any nonstandard number smaller than $k_i$ for $i<m$. Since stage $m$ was not successful, there is no proof in $M$ from $\Tr_n+\PAp_k$ that stage $m$ is the last successful stage, enumerating the rest of $t$ beyond $s$. Thus, this theory is consistent in $M$, and so by considering the Henkin theory, we get a model $N$ of $\PAp_k$ in which $\tilde e$, using oracle $0^{(n)}$ of the new model, enumerates exactly $t$. Note that because $N$ satisfies the theory $\Tr_n$ as defined in $M$, it follows that this is a $\Sigma_n$-elementary end-extension $M\elesub_{\Sigma_n} N$, and so the oracle $0^{(n)}$ as defined in $N$ agrees with the oracle as defined in $M$. So the earlier part of the computation of $\tilde e$ is the same in $M$ as in $N$. And since $k$ is nonstandard, $N$ is a model of $\PAp$. So we've found the desired extension.

So far, this gives a separate $\tilde e$ for each $n$, but actually, $\tilde e$ can check which $n$ it is using by consulting the oracle, because there is a program which on input $0$ with oracle $0^{(n)}$ outputs $n$. So we can have a single program $\tilde e$ that works uniformly with all $n$.
\end{proof}

Theorem~\ref{Theorem.Universal-algorithm-Sigma-n} was observed independently by Rasmus Blanck~\cite{Blanck:Hierarchy-incompleteness-results-for-arithmetically-definable-fragments-of-arithmetic}.

\goodbreak
\subsection{Applications of the universal algorithm}

I should like briefly to explain a few applications of the universal algorithm. It turns out that several interesting classical results concerning the models of arithmetic can be deduced as immediate consequences of the universal algorithm result.

For example, the universal algorithm easily provides an infinite list of mutually independent $\Pi^0_1$-assertions. The existence of such a family was first proved by Mostowski~\cite{Mostowski1960:A-generalization-of-the-incompleteness-theorem} and independently by Kripke~\cite{Kripke1962:Flexible-predicates-of-formal-number-theory}. Some logicians have emphasized that the universal algorithm provided in corollary~\ref{Corollary.Universal-infinite-sequence} should be seen as a consequence of the independent $\Pi^0_1$-sentences.\footnote{For example, see the comments on~\cite{Hamkins.blog2016:Every-function-can-be-computable}.} To my way of thinking, however, we should view the implication more naturally in the other direction, in light of the easy argument from the universal algorithm to the independent $\Pi^0_1$ sentences:

\begin{theorem}
  There are infinitely many mutually independent $\Pi^0_1$ sentences $\eta_0$, $\eta_1$, $\eta_2$, and so on. Any desired true/false pattern for these sentences is consistent with $\PA$.
\end{theorem}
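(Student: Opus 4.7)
The plan is to define $\eta_n$ as the $\Pi^0_1$ sentence asserting that the number $n$ does not appear among the outputs of the universal algorithm $e$ of Theorem~\ref{Theorem.Universal-algorithm}, specialized to $\PAp=\PA$. This is genuinely $\Pi^0_1$, since its negation---``at some stage, the program $e$ enumerates $n$''---is a straightforward $\Sigma^0_1$ computational assertion.

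To verify mutual independence, I would fix an arbitrary finite index set $n_0 < n_1 < \cdots < n_k$ together with any assignment of truth values, and let $F \subseteq \{n_0,\ldots,n_k\}$ collect those indices meant to be assigned \emph{false}. Let $s$ be any finite sequence listing exactly the elements of $F$, say in increasing order. Since $\N\satisfies\PA$, part (4) of Theorem~\ref{Theorem.Universal-algorithm} provides a model $M\satisfies\PA$ in which $e$ enumerates exactly the sequence $s$ and nothing more. In that model, $\eta_{n_i}$ holds precisely when $n_i\notin F$, which is exactly the prescribed pattern. Thus the corresponding Boolean combination $\bigwedge_{i\leq k}\eta_{n_i}^{\epsilon_i}$---with signs $\epsilon_i$ matching the chosen values---is consistent with $\PA$. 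This simultaneously establishes mutual independence of the family $\{\eta_n\}$ and the claim that any desired true/false pattern is consistent with $\PA$.

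There is essentially no obstacle: the heavy lifting is done by the universal algorithm itself. The only subtlety to check is that part (4) realizes $s$ \emph{exactly}, so that the numbers $n_i \notin F$ are genuinely absent from the enumerated sequence and not merely pending at some later stage; but this is precisely what the theorem's phrase ``enumerates exactly $s$'' delivers. If one wishes to state the result for a c.e.\ extension $\PAp$ (rather than $\PA$) with $\N\satisfies\PAp$, the same argument goes through verbatim using the universal algorithm for that base theory.
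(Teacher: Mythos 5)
Your proposal is correct and takes essentially the same approach as the paper: the same sentences $\eta_n$ (``$n$ does not appear on the universal sequence'') and the same appeal to the universal algorithm's exact-realization property to hit any prescribed finite truth pattern. The paper additionally routes through the extension property (Corollary~\ref{Corollary.Universal-infinite-sequence}) so the pattern can be realized over any given model rather than just over $\N$, but for the statement as given your use of part (4) of Theorem~\ref{Theorem.Universal-algorithm} suffices.
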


\begin{proof}
 Let $\eta_k$ be the assertion that $k$ does not appear on the universal sequence, meaning the sequence enumerated by the universal algorithm $e$ of theorem~\ref{Theorem.Universal-algorithm} using the theory \PA. Since the universal sequence is empty in $\N$, these statements are all true in the standard model. Since the extension property of the universal sequence allows us to find an end-extension $N$ of any given model $M$ that adds precisely any desired finitely many additional numbers to the sequence, we can make the $\eta_k$ become true or false in any finite desired combination, and so by compactness we can also achieve any infinite pattern consistently as well.
\end{proof}

The negations of these sentences form a natural independent family of buttons. A \emph{button} in a Kripke model is a statement $\rho$ for which $\possible\necessary\rho$ holds in every world. The button is \emph{pushed} in a world where $\necessary\rho$ holds, and otherwise \emph{unpushed}. A family of buttons is \emph{independent}, if in any model, any of the buttons can be pushed in a suitable extension without pushing the others.

\begin{theorem}
 There are infinitely many $\Sigma^0_1$ sentences $\rho_0$, $\rho_1$, $\rho_2$, and so on, which form an independent family of buttons for the models of \PA\ under end-extension, unpushed in the standard model. Indeed, for any $M\satisfies\PA$ and any $I\of\N$ coded in $M$, there is an end-extension $N$ of $M$ such that $N\satisfies\rho_k$ for all $k\in I$, and otherwise the truth values of $\rho_i$ are unchanged from $M$ to $N$ for $i\notin I$.
\end{theorem}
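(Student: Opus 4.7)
The plan is to reuse the universal algorithm $e$ from Theorem~\ref{Theorem.Universal-algorithm} and take $\rho_k$ to be the $\Sigma^0_1$ sentence asserting that the number $k$ appears on the sequence enumerated by $e$ (equivalently, the negation of the $\eta_k$ of the previous theorem). Three things will need to be checked: each $\rho_k$ is a button, each is unpushed in $\N$, and the family enjoys the strong independence claimed.

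For the first two points I would observe that, since $\rho_k$ is $\Sigma^0_1$, it is preserved upward into any end-extension, so $\rho_k\implies\upnecessary\rho_k$; and by the extension property of Theorem~\ref{Theorem.Universal-algorithm}(2), any $M\satisfies\PA$ with universal sequence $s$ end-extends to a model whose universal sequence is $s$ followed by $k$, which gives $M\satisfies\uppossible\rho_k$ and hence $\uppossible\upnecessary\rho_k$ in every world. Thus each $\rho_k$ is a button. In the standard model, Theorem~\ref{Theorem.Universal-algorithm}(3) gives that the universal sequence is empty, so $\N\satisfies\neg\rho_k$ for every $k$ and none of the buttons is pushed there.

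The strong independence assertion is the main content, and I would prove it by a single application of the extension property. Given $M\satisfies\PA$ and $I\of\N$ coded in $M$ by some $c\in M$---so that $I=I_c\intersect\N$ for the $M$-finite set $I_c$ coded by $c$---I would let $s$ be the universal sequence of $e$ in $M$ and, working inside $M$, form a finite sequence $t$ extending $s$ by appending in any definable order the elements of $I_c\minus\ran(s)$. Theorem~\ref{Theorem.Universal-algorithm}(2) then supplies an end-extension $N\satisfies\PA$ of $M$ in which $e$ enumerates exactly $t$. By construction every $k\in I$ lies in $\ran(t)$, so $\rho_k$ holds in $N$; for standard $k\notin I$ we have $k\notin I_c$, so $k$ is not among the appended numbers, and the truth value of $\rho_k$ in $N$ agrees with its truth value in $M$.

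The main point requiring care is the coding convention: one has to use that $I$ is exactly the standard trace $I_c\intersect\N$, so that the set of newly appended standard numbers is precisely $I\minus\ran(s)$, and that $t$ can be formed inside $M$ as an element of $M$ (which is fine because $I_c$ is $M$-finite). Beyond this, the argument is an immediate corollary of the extension property of the universal algorithm and the upward preservation of $\Sigma^0_1$ sentences, so I don't expect any further obstacle.
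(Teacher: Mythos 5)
Your proposal is correct and follows exactly the paper's route: the paper also takes $\rho_k$ to assert that $k$ appears on the universal sequence, notes it is empty in $\N$, and invokes the extension property to append any coded set without adding other numbers. Your additional care about the coding convention (appending $I_c\setminus\ran(s)$ inside $M$ and checking standard $i\notin I$ are untouched) just fills in details the paper leaves implicit.
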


\begin{proof}
Let $\rho_k$ be the assertion that $k$ appears on the universal finite sequence. In the standard model, no numbers appear on that sequence, and in a suitable end-extension, any number or coded set of numbers can be added without adding any others.
\end{proof}

Next, consider the case of Orey sentences~\cite{Orey1961:Relative-interpretations}. An \emph{Orey sentence} is a sentence $\sigma$ in the language of arithmetic, such that every model of \PA\ has end-extensions where $\sigma$ is true and others where $\sigma$ is false.
In other words, an Orey sentence is a \emph{switch} in the Kripke model consisting of the models of \PA\ under end-extension, a sentence $\sigma$ for which $\uppossible\sigma$ and $\uppossible\neg\sigma$ are true in every model of \PA. A switch in a Kripke model is a statement that can be turned on and off from any world by moving to a suitable accessible world.

The universal algorithm easily provides numerous Orey sentences and indeed infinitely many independent Orey sentences. See also~\cite[corollary~7.11]{Blanck2017:Dissertation:Contributions-to-the-metamathematics-of-arithmetic}.

\goodbreak
\begin{theorem}\label{Theorem.Infinitely-many-independent-Orey-sentences}
 There is an infinite list of mutually independent Orey sentences, or switches, in the language of arithmetic. That is, there are sentences $\sigma_0$, $\sigma_1$, $\sigma_2$, and so on, such that for any model of arithmetic $M\satisfies\PAp$ and any desired pattern of truth for any finitely many of those sentences, there is an end-extension $N\satisfies\PAp$ in which that pattern of truth is realized. Indeed, for any $I\of\N$ in the standard system of $M$, there is an end-extension $N$ of $M$ such that $N\satisfies\sigma_k$ for exactly $k\in I$.
\end{theorem}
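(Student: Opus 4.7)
The plan is to exploit the universal algorithm directly. For each standard $k$, define
$$\sigma_k := \text{``the number of occurrences of $k$ in the sequence enumerated by the universal program $e$ is even,''}$$
where $e$ is the universal algorithm of theorem~\ref{Theorem.Universal-algorithm}. Since $\PA$ proves that $e$ enumerates a finite sequence, each $\sigma_k$ is a well-defined arithmetic sentence. In the standard model $\N$ the enumerated sequence is empty by theorem~\ref{Theorem.Universal-algorithm}(3), so every $\sigma_k$ holds there; meanwhile theorem~\ref{Theorem.Universal-algorithm}(2) shows that each $\sigma_k$ is a switch over any $M\satisfies\PAp$, since appending a single additional $k$ to the sequence in a suitable end-extension flips the parity of its occurrences, while appending nothing preserves it.

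To prove the strong form of the theorem, fix $M\satisfies\PAp$ and $I\of\N$ with code $\bar I\in M$, and let $s$ be the sequence enumerated by $e$ in $M$. Inside $M$, choose a bound $L$ large enough that $\bar I\of\{0,1,\ldots,L-1\}$ and, when $M$ is nonstandard, with $L$ itself nonstandard so that $L$ exceeds every standard natural. Working in $M$, define the finite sequence $t$ by starting with $s$ and then, for $k=0,1,\ldots,L-1$ in turn, appending the single number $k$ precisely when $\sigma_k$ and $k\in\bar I$ have different truth values in $M$. The result is an $M$-coded finite extension of $s$. Apply theorem~\ref{Theorem.Universal-algorithm}(2) to obtain an end-extension $N\satisfies\PAp$ of $M$ in which $e$ enumerates exactly $t$.

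The verification is then immediate: for any standard $k$, the count of $k$ in $t$ differs from its count in $s$ by $0$ or $1$, the decision having been made precisely so that the new parity agrees with $(k\in I)$, and appending nonstandard values never affects the count of a standard $k$. Hence $N\satisfies\sigma_k$ if and only if $k\in I$, which establishes the ``indeed'' clause and in particular the first assertion about arbitrary finite patterns. The one delicate point, and the only real obstacle, is to arrange that a single $M$-coded object $t$ witnesses the correct adjustment simultaneously at every standard $k$; when $\SSy(M)$ contains an infinite $I$ this forces the iteration inside $M$ to run past every standard natural, which is why we take $L$ nonstandard in that case. The $M$-finiteness of $t$, even when $L$ is nonstandard, is precisely what allows theorem~\ref{Theorem.Universal-algorithm}(2) to be invoked directly, rather than requiring the weaker corollary~\ref{Corollary.Universal-infinite-sequence}.
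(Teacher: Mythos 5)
Your proof is correct and rests on exactly the same mechanism as the paper's: the extension property of theorem~\ref{Theorem.Universal-algorithm}(2) applied to an $M$-coded finite extension $t$ of the current universal sequence, with the only difference being the choice of switch sentences---you use the parity of the number of occurrences of $k$ in the sequence, whereas the paper uses the $k^{\text{th}}$ binary digit of the last number on the sequence. Your version needs the extra (correctly handled) bookkeeping step of compensating inside $M$ for the occurrence counts already present in $s$, and your taking $L$ nonstandard so that the adjustment covers every standard $k$ is exactly the right move for realizing an infinite $I\in\SSy(M)$; both versions of the sentences come out $\Sigma_2$, since each must refer to the completed sequence.
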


\begin{proof}
Let $\sigma_k$ assert simply that the $k^{th}$ binary digit of the last number on the sequence enumerated by the universal algorithm is $1$. This is a $\Sigma_2$ assertion, with the main cause of complexity being that one must assert that one has the \emph{last} number on the universal sequence. Since theorem~\ref{Theorem.Universal-algorithm} allows us to arrange in an end-extension that any desired number appears as the last number on the sequence, we can therefore arrange any particular desired pattern for finitely many of its binary digits. Similarly, for any set $I\of\N$ coded in the model, we can arrange in some end-extension that the binary digits of the last number on the universal sequence conform with $I$, thereby ensuring that $\sigma_k$ holds for exactly the $k$ in $I$, as desired.
\end{proof}

We may similarly use the universal sequence to derive the ``flexible'' formula result of Kripke~\cite{Kripke1962:Flexible-predicates-of-formal-number-theory}. This was also observed by Rasmus Blanck~\cite{Blanck:Hierarchy-incompleteness-results-for-arithmetically-definable-fragments-of-arithmetic}. Here, I also achieve a uniform version of the result in statement (2).

\begin{theorem}\label{Theorem.Flexible-formula}\
 \begin{enumerate}
   \item For every $n\geq 2$, there is a $\Sigma_n$ formula $\sigma(x)$ such that for any model of arithmetic $M$ and any $\Sigma_n$ formula $\phi(x)$, there is an end-extension $N$ of $M$ such that $N\satisfies\forall x\, \bigl(\sigma(x)\iff\phi(x)\bigr)$.
   \item Indeed, there is a computable sequence of formulas $\sigma_n(x)$ for $n\geq 2$, with $\sigma_n$ having complexity $\Sigma_n$, such that for any model of arithmetic $M$ and any sequence of formulas $\phi_n$ coded in $M$, with $\phi_n$ of complexity $\Sigma_n$, there is an end-extension $N$ of $M$ with $N\satisfies\forall x\, \bigl(\sigma_n(x)\iff\phi_n(x)\bigr)$ for all $n\geq 2$.
 \end{enumerate}
\end{theorem}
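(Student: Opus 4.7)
Both parts will follow from a uniform construction based on the universal algorithm $e$ of Theorem~\ref{Theorem.Universal-algorithm}. The plan is to design $\sigma_n(x)$ so that it simply reads off the last number enumerated by $e$ and interprets it as a code for the $\Sigma_n$ formula one wishes to realize; the extension property of Theorem~\ref{Theorem.Universal-algorithm}(2) then lets us freely tailor this last number in some end-extension.

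Fix a computable pairing so that a single natural number $a$ codes a function assigning to each $n \geq 2$ (the code of) a formula. For each $n \geq 2$, define $\sigma_n(x)$ to assert: ``there exists $a$ such that $a$ is the last element enumerated by the universal algorithm $e$, the $n$th entry of $a$ codes a $\Sigma_n$-formula $\phi$, and $\Tr_n(\phi, x)$ holds,'' where $\Tr_n$ is the universal $\Sigma_n$ truth predicate. The scheme $n \mapsto \sigma_n$ is uniformly computable in $n$, which is what gives part (2) as soon as the end-extension construction is in hand.

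For the flexibility, given any $M \satisfies \PAp$ and any sequence $\<\phi_n>$ coded in $M$, let $a \in M$ be its code under the pairing, let $s$ be the $e$-sequence in $M$, and set $t = s \concat \<a>$. By Theorem~\ref{Theorem.Universal-algorithm}(2) there is an end-extension $N \satisfies \PAp$ of $M$ in which $e$ enumerates exactly $t$; in $N$ the last element of the universal sequence is then precisely $a$, so that $N \satisfies \forall x\, (\sigma_n(x) \iff \phi_n(x))$ holds for every standard $n \geq 2$. Part (1) is the special case in which one only cares about a single $n$ and a single $\phi$.

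The main technical point to watch is the complexity bound. The clause ``$a$ is the last element enumerated by $e$'' should be $\Sigma_2$: since $\PA$ proves the $e$-sequence is finite (Theorem~\ref{Theorem.Universal-algorithm}(1)), one can express it as the existence of a stage $t$ beyond which no further successful stage occurs and at which the most recently enumerated number is $a$. Combined with the $\Sigma_n$ predicate $\Tr_n(\phi,x)$, the conjunction absorbs into $\Sigma_n$ as long as $n \geq 2$. Ensuring that this complexity bookkeeping goes through cleanly---together with the coding and decoding of the $\phi_n$'s, and the possibility of nonstandard indices $n$---is the main thing to verify, but no ideas beyond the universal algorithm itself are needed.
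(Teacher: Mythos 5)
Your proposal is correct and follows essentially the same route as the paper: read the index of the desired formula off the tail of the universal sequence, plug it into a universal $\Sigma_n$ formula (your $\Tr_n$), note that ``is the last element of the universal sequence'' is $\Sigma_2$ and hence absorbed for $n\geq 2$, and use the extension property of Theorem~\ref{Theorem.Universal-algorithm} to set that tail at will. The only cosmetic difference is that for part (2) you pack the whole sequence of codes into one last number via pairing, whereas the paper reads $k_n$ as the $n$th element of the batch added at the last successful stage; these are interchangeable.
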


\begin{proof}
For statement (1), let $\sigma(x)=\Phi_n(k,x)$, where $k$ is the last element of the universal finite sequence and $\Phi_n(k,x)$ is a fixed universal $\Sigma_n$ formula, meaning that every $\Sigma_n$ formula arises as the $k$th section of it for some $k$. The assertion $\sigma(x)$ has complexity $\Sigma_n$, if $n\geq 2$, since $\sigma(x)\iff \exists k\, \Phi_n(k,x)\wedge ``k$ is the final element of the universal sequence.'' Since we can arrange that the final element of the universal sequence is any desired $k$ in a suitable end-extension $N$ of $M$, we can make $\sigma(x)$ agree with any desired $\Sigma_n$ formula.

For the uniform result, simply let $\sigma_n(x)=\Phi_n(k_n,x)$, where $k_n$ is the $n^{th}$ element of the finite sequence added at the last successful stage of the universal sequence. Since we can find an end-extension where the final stage of the universal sequence adds any desired coded sequence, we can arrange that it picks out the indices of the formulas $\phi_n$, thereby obtaining the uniform flexibility property as desired.
\end{proof}

Similar arguments extend these results to the case of $\Sigma_n$-elementary extensions. See also~\cite[theorem~7.21]{Blanck2017:Dissertation:Contributions-to-the-metamathematics-of-arithmetic}.

\goodbreak\begin{theorem}\label{Theorem.Independent-switches-Sigma_n}\
 \begin{enumerate}
   \item For any $n$, there is an infinite list of independent buttons for $\Sigma_n$-elementary end-extensions of models of arithmetic. That is, there are statements $\rho_0$, $\rho_1$, $\rho_2$, and so on, of complexity $\Sigma_{n+1}$ in the language of arithmetic, such that for any $M\satisfies\PA$ and any $I\of \N$ coded in $M$, there is an end-extension $N$ of $M$ with $N\satisfies\rho_k$ for all $k\in I$ and otherwise the truth values of $\rho_i$ are unchanged from $M$ to $N$.
   \item For any $n$, there is an infinite list of independent switches for $\Sigma_n$-elementary end-extensions of models of arithmetic. That is, there are sentences $\sigma_0$, $\sigma_1$, $\sigma_2$, and so on, of complexity $\Sigma_{n+2}$ in the language of arithmetic, such that for any model of arithmetic $M\satisfies\PAp$ and any desired pattern of truth $I\of\N$ in the standard system of $M$, there is a $\Sigma_n$-elementary end-extension $M\elesub_{\Sigma_n}N\satisfies\PAp$ in which that pattern of truth is realized: $N\satisfies\sigma_k$ precisely for $k\in I$.
   \item For any $n$ and any $m\geq n+2$, there is a `flexible' $\Sigma_m$ formula $\sigma(x)$, one for which for every model of arithmetic $M\satisfies\PAp$ and any $\Sigma_m$ formula $\phi(x)$, there is a $\Sigma_n$-elementary end-extension $N$ for which $N\satisfies\forall x\, \sigma(x)\iff\phi(x)$.
 \end{enumerate}
\end{theorem}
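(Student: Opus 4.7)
The plan is to mimic the proofs of Theorems~\ref{Theorem.Infinitely-many-independent-Orey-sentences} and~\ref{Theorem.Flexible-formula} essentially verbatim, replacing the universal algorithm $e$ by the oracle version $\tilde e$ supplied by Theorem~\ref{Theorem.Universal-algorithm-Sigma-n}. That theorem provides exactly the analogue of the extension property we need, but now for $\Sigma_n$-elementary end-extensions: from any $M\satisfies\PAp$ in which $\tilde e$ (with oracle $0^{(n)}$ of $M$) enumerates a sequence $s$ and any $t\in M$ extending $s$, there is $M\elesub_{\Sigma_n}N\satisfies\PAp$ in which $\tilde e$ enumerates exactly $t$. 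The only new work is complexity bookkeeping, together with checking that the relevant properties of $\tilde e$'s computation transfer from $M$ to $N$; this is automatic, since $0^{(n)}$ is $\Sigma_n$-definable and therefore absolute between $M$ and any $\Sigma_n$-elementary extension of $M$ for inputs in $M$.

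For part~(1), let $\rho_k$ assert ``$k$ appears on the sequence enumerated by $\tilde e$ with oracle $0^{(n)}$.'' Since verifying a finite initial segment of $\tilde e$'s computation requires only oracle queries to a $\Sigma_n$ set, this assertion has complexity $\Sigma_{n+1}$: existentially quantify over a stage at which $k$ is output, then verify the computation, using $\Sigma_n$ and $\Pi_n$ subformulas for the oracle queries. A witness to $\rho_k$ in $M$ remains a witness in any $\Sigma_n$-elementary extension of $M$, so each $\rho_k$ is a button. To push a prescribed $I\of\N$ coded in $M$, I use Theorem~\ref{Theorem.Universal-algorithm-Sigma-n}(2) to pass to $M\elesub_{\Sigma_n}N$ in which $\tilde e$'s sequence is extended by an $M$-internal listing of the set coded by $I$; this pushes $\rho_k$ precisely for $k\in I$ while leaving the truth of $\rho_i$ for $i\notin I$ unchanged.

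For part~(2), I define $\sigma_k$ to say ``the $k$th binary digit of the last number on the sequence enumerated by $\tilde e$ is $1$.'' Being on the sequence is $\Sigma_{n+1}$, having no further successful stage is $\Pi_{n+1}$, and together the clause ``$m$ is the last element'' sits at the Boolean-combination level $\Sigma_{n+2}$, which is then the overall complexity of $\sigma_k$. Given any $I\of\N$ in the standard system of $M$, I pick $m^*\in M$ whose binary bits, indexed by the standard cut, encode the characteristic function of $I$; by Theorem~\ref{Theorem.Universal-algorithm-Sigma-n}(2) I extend to $M\elesub_{\Sigma_n}N$ in which $\tilde e$ enumerates the one-term extension $s\concat\<m^*>$, so that $N\satisfies\sigma_k$ exactly for $k\in I$. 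For part~(3), I set
$$\sigma(x)\ =\ \exists k\,\bigl[\Phi_m(k,x)\ \wedge\ k\text{ is the last element of }\tilde e\text{'s sequence}\bigr],$$
where $\Phi_m$ is a fixed universal $\Sigma_m$ formula. The conjunct ``$k$ is last'' has complexity $\Sigma_{n+2}$, and since $m\ge n+2$ this is absorbed by the $\Sigma_m$ outer form, keeping $\sigma(x)$ at complexity $\Sigma_m$. Given any $\Sigma_m$ formula $\phi(x)$, choose $k^*$ with $\Phi_m(k^*,x)\equiv\phi(x)$, and apply the extension property of $\tilde e$ to append $k^*$ as the final term, yielding $M\elesub_{\Sigma_n}N$ with $N\satisfies\forall x\,\sigma(x)\iff\phi(x)$.

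The main obstacle I expect is the complexity bookkeeping around the ``last element'' clause, in particular confirming that ``no further stage ever succeeds'' really does express uniformly as $\Pi_{n+1}$ with the oracle $0^{(n)}$ in hand; this ultimately rests on the fact that $\Sigma_n$-truth is computable from $0^{(n)}$. A secondary subtlety is ensuring that the oracle seen by $\tilde e$ agrees in $M$ and $N$ on the relevant inputs, so that a computation witnessed in $M$ persists in $N$—this follows from the $\Sigma_n$-definability of $0^{(n)}$ together with the $\Sigma_n$-elementarity of the extension.
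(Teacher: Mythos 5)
Your proposal is correct and follows essentially the same route as the paper: the same choice of $\rho_k$ (``$k$ appears on the sequence of $\tilde e$''), the same $\sigma_k$ (``the $k$th bit of the last enumerated number is $1$''), and the same flexible formula $\Phi_m(k,x)$ with $k$ the final element, all driven by the extension property of Theorem~\ref{Theorem.Universal-algorithm-Sigma-n}. Your complexity bookkeeping and the observation that $\Sigma_n$-elementarity preserves the oracle computation match the paper's argument.
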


\begin{proof}
For statement (1), let $\rho_k$ be the assertion that $k$ appears on the universal finite sequence enumerated by the algorithm $\tilde e$ of theorem~\ref{Theorem.Universal-algorithm-Sigma-n}, using oracle $0^{(n)}$. This is a $\Sigma^0_{n+1}$ assertion: one asserts that there is a computation, using the correct oracle, showing a stage at which $k$ appears on the sequence. Since the sequence is empty in the standard model, these statements are all false there, and they are possibly necessary with respect to $\uppossible_n$, since once $k$ appears on the sequence, it remains on the sequence in all further $\Sigma_n$-elementary end-extensions. Since any specific number can be added to the sequence, and indeed any coded set of numbers, with no others, these form an independent family of buttons.

For statement (2), let $\sigma_k$ be the assertion that the $k^{th}$ binary digit of the last number enumerated by algorithm $\tilde e$ of theorem~\ref{Theorem.Universal-algorithm-Sigma-n}, using oracle $0^{(n)}$, is one. The complexity of this assertion is $\Sigma_{n+2}$. Since theorem~\ref{Theorem.Universal-algorithm-Sigma-n} shows that we can arrange that this last number is whichever number we want in a $\Sigma_n$-elementary end-extension, we can therefore arrange the pattern of truth for the $\sigma_k$ to be as desired, for any pattern $I$ coded in $M$.

For statement (3), simply adapt the proof of theorem~\ref{Theorem.Flexible-formula}. Let $\sigma(x)=\Phi_m(k,x)$, where $k$ is the final element of the universal $\Sigma_n$-sequence and argue as before, but with $\Sigma_n$-elementary end-extensions.
\end{proof}

One can adapt statement (1) to models of $\PAp$, if one allows the buttons to have parameters, and indeed, the only parameter needed would be the length $u$ of the universal sequence in a given base model of $\PAp$, for then one takes $\rho_n$ as the assertion that $n$ appears on the sequence after $u$. If $\N\satisfies\PAp$, then no parameters are needed, and in any case, no parameters are needed in the switches $\sigma_k$.

The following theorem follows from a classical result due to Wilkie~\cite{Wilkie1975:On-models-of-arithmetic-answers-to-two-problems-raised-by-H-Gaifman}, but is achieved here as a consequence of the universal algorithm.

\begin{theorem}
 For every model $M\satisfies\PAp$, there is a diophantine equation in $M$ with no solutions in $M$, but which does have a solution in some end-extension of $M$ to $N\satisfies\PAp$.
\end{theorem}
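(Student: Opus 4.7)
The plan is to reduce the statement directly to the universal algorithm of Theorem~\ref{Theorem.Universal-algorithm}, invoking the Matiyasevich--Robinson--Davis--Putnam theorem (MRDP), which asserts, provably in $\PA$ (indeed already in $I\Sigma_1$), that every $\Sigma_1$-definable set of natural numbers is diophantine. Thus for any $\Sigma_1$ formula $\rho(k)$ there is a polynomial $P(k,\vec{x})$ with integer coefficients such that $\PA\proves\rho(k)\iff\exists\vec{x}\,P(k,\vec{x})=0$.

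The $\Sigma_1$ assertion I would use is $\rho(k)$: ``$k$ appears on the sequence enumerated by the universal algorithm $e$ of Theorem~\ref{Theorem.Universal-algorithm},'' which is patently $\Sigma_1$, as it asserts the existence of a halting computation of $e$ up through some stage at which $k$ is among the elements enumerated. Let $P(k,\vec{x})$ be the corresponding diophantine polynomial given by MRDP.

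Now, given any $M\satisfies\PAp$, let $s\in M$ be the (possibly nonstandard) finite sequence currently enumerated by $e$ in $M$. Pick any $k\in M$ not appearing on $s$; such a $k$ exists since $s$ is $M$-finite and so $M$-bounded. In $M$ we have $M\satisfies\neg\rho(k)$, so the diophantine equation $P(k,\vec{x})=0$ has no solution in $M$. Apply the extension property (statement~(2)) of Theorem~\ref{Theorem.Universal-algorithm} with $t=s\concat\<k>\in M$, obtaining an end-extension $N\satisfies\PAp$ in which $e$ enumerates exactly $t$. Since $k$ now appears on the universal sequence in $N$, we have $N\satisfies\rho(k)$, and so $P(k,\vec{x})=0$ admits a solution in $N$, as desired.

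The only real issue is verifying that MRDP is applicable in this context, but this is a classical result and the formalization in $\PA$ is standard; once that is in hand, the argument reduces to an immediate application of the extension property of the universal algorithm, which is where all the arithmetic content lies. In effect, the universal algorithm transforms Wilkie's statement into the essentially trivial observation that any $\Sigma_1$ fact can be arranged to become true in a suitable end-extension.
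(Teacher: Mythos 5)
Your proof is correct and follows essentially the same route as the paper's: both arguments take a parametrized $\Sigma_1$ assertion about the universal algorithm that is false in $M$ (the paper uses ``stage $n$ is successful'' for the first unsuccessful stage $n$, you use ``$k$ appears on the sequence'' for a $k$ beyond the current sequence), use the extension property of Theorem~\ref{Theorem.Universal-algorithm} to make it true in an end-extension, and then pass to a diophantine equation via MRDP, which the paper invokes implicitly with the phrase ``corresponds to a diophantine equation.'' The choice of $\Sigma_1$ witness is immaterial, so there is nothing to add.
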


\begin{proof}
 Let $n$ be any (possibly nonstandard) number in $M$, such that stage $n$ of the universal algorithm is not successful in $M$. There is an end-extension $N$ of $M$ in which this stage is successful, and this is a $\Sigma_1$ assertion about $n$ that is true in $N$, but not in $M$. This assertion corresponds to a diophantine equation in $M$ with no solution in $M$, but with a solution in $N$.
\end{proof}

One can view the theorem as asserting alternatively that no model of arithmetic has a maximal $\Sigma_1$ diagram: for every model $M\satisfies\PAp$, there are new $\Sigma_1$ assertions about parameters in $M$ that can become true in an end-extension of $M$. Meanwhile, note that the use of parameters in the previous argument is required, since by the argument of theorem~\ref{Theorem.Maximal-Sigma1-extension}, some models of arithmetic have a maximal $\Sigma_1$ theory, as opposed to its $\Sigma_1$ diagram.

A similar argument works for $\Sigma_n$-elementary extensions. The use of parameters in this result also is required, in light of theorem~\ref{Theorem.Maximal-Sigma_n-extension}.

\begin{theorem}
 No model of arithmetic $M\satisfies\PA$ has a maximal $\Sigma_{n+1}$-diagram. For every model $M$ and every natural number $n$, there is some $m\in M$ and a $\Sigma_{n+1}$ statement $\varphi(m)$ that is not true in $M$, but becomes true in a $\Sigma_n$-elementary end-extension of $M$.
\end{theorem}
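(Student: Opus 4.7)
The plan is to mimic the proof of the previous theorem, but with the $\Sigma_n$-elementary universal algorithm $\tilde e$ from theorem~\ref{Theorem.Universal-algorithm-Sigma-n} in place of the plain universal algorithm $e$. Fix $n$ and let $\tilde e$ be the oracle program supplied by that theorem (taking $\PAp=\PA$, though the argument works for any consistent c.e.~extension). Given $M\satisfies\PA$, run $\tilde e$ with oracle $0^{(n)}$ as defined in $M$; by clause~(1) of theorem~\ref{Theorem.Universal-algorithm-Sigma-n} this enumerates only a (possibly nonstandard) finite sequence in $M$, and so there is a least stage $m\in M$ that is not successful.

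Let $\varphi(m)$ be the natural assertion that stage $m$ of $\tilde e$, with oracle $0^{(n)}$, \emph{is} successful, that is, that there exist $k_m$ (smaller than the earlier $k_i$'s) and a proof from $\Tr_n+\PAp_{k_m}$ of the appropriate self-referential statement, together with the explicit list of numbers to be enumerated. The complexity check is the step to watch: the existential quantifier over the proof code is $\Sigma_1$, but verifying each axiom invocation requires querying $\Tr_n$, which is $\Sigma_n$; putting these together yields complexity $\Sigma_{n+1}$, as required. By the choice of $m$, we have $M\satisfies\neg\varphi(m)$.

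Now invoke the extension property, clause~(2) of theorem~\ref{Theorem.Universal-algorithm-Sigma-n}: since $\tilde e$ has enumerated some $s$ in $M$ and the stage-$m$ datum gives an $M$-definable extension $t$ of $s$ adding exactly the one-more-successful-stage content, there is a $\Sigma_n$-elementary end-extension $N\supseteq_{\Sigma_n} M$ in which $\tilde e$ with oracle $0^{(n)}$ (which by $\Sigma_n$-elementarity agrees with $0^{(n)}$ as computed in $M$ on the common part) enumerates exactly $t$. In $N$, stage $m$ is therefore successful, so $N\satisfies\varphi(m)$, proving the theorem.

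The only genuine obstacle is the complexity bookkeeping: one must confirm that the ``stage $m$ is successful'' predicate really has complexity $\Sigma_{n+1}$ (not higher) and that the $\Sigma_n$-elementarity of the extension guarantees that the relevant oracle $0^{(n)}$ and the $\Tr_n$ predicate are interpreted coherently across $M$ and $N$, so that the proof witness found inside $N$ actually certifies the truth of $\varphi(m)$ as evaluated there. Both points are already addressed in the proof of theorem~\ref{Theorem.Universal-algorithm-Sigma-n}, so the argument reduces to citing that theorem and reading off the complexity.
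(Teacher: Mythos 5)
Your proof is correct and follows essentially the same route as the paper's: take the first unsuccessful stage $m$ of the $\Sigma_n$-elementary universal algorithm $\tilde e$, observe that ``stage $m$ is successful'' is $\Sigma_{n+1}$, false in $M$, and made true in a $\Sigma_n$-elementary end-extension supplied by theorem~\ref{Theorem.Universal-algorithm-Sigma-n}. One small correction to your complexity bookkeeping: an existential quantifier over a $\Sigma_n$ matrix stays $\Sigma_n$, so the justification you sketch would only give $\Sigma_n$; what genuinely pushes the predicate to $\Sigma_{n+1}$ is that asserting the correctness of a computation trace relative to the oracle $0^{(n)}$ requires certifying both positive ($\Sigma_n$) and negative ($\Pi_n$) oracle answers, giving a $\Delta_{n+1}$ matrix and hence $\Sigma_{n+1}$ after the leading existential.
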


\begin{proof}
Let $m$ be the first unsuccessful stage of the universal algorithm $\hat e$ of theorem~\ref{Theorem.Universal-algorithm-Sigma-n}, used with oracle $0^{(n)}$ in $M$. Let $\sigma$ be the statement that stage $m$ is successful for this algorithm. This is a statement about $m$ with complexity $\Sigma_{n+1}$, which is not true in $M$, but by the extension property of theorem~\ref{Theorem.Universal-algorithm-Sigma-n}, it becomes true in some $\Sigma_n$-elementary end-extension $N$ of $M$.
\end{proof}

Finally, let me show how Chaitin's incompleteness theorem follows easily from the universal algorithm.

\begin{theorem}
 For any consistent c.e.~theory $T$ extending \PA, there is a finite number $L$ such that for no finite string $s$ does $T$ prove that the Kolmogorov complexity of $s$ exceeds $L$. That is, the theory $T$ cannot prove that any particular string $s$ is strictly harder than $L$ to produce.
\end{theorem}

\begin{proof}
 Let $L$ be the size of the program that runs the first stage of the universal algorithm for the theory $T$ and outputs the result. For any finite sequence $s$, it is consistent with $T$ that this program produces $s$, and so it is consistent with $T$ that the Kolmogorov complexity of $s$ is at most $L$. So $T$ cannot prove that any particular sequence $s$ is harder than this to produce.
\end{proof}

\section{Some background on modal logic and potentialism}\label{Section.Modal-logic-background}

The modal validities of a potentialist system, or indeed any Kripke model of possible worlds, can be fruitfully analyzed by identifying the presence of certain kinds of control statements, such as buttons, switches, ratchets and railyards. A general account is given in~\cite{HamkinsLeibmanLoewe2015:StructuralConnectionsForcingClassAndItsModalLogic}, with numerous examples and applications provided in~\cite{HamkinsLoewe2008:TheModalLogicOfForcing, HamkinsLinnebo2022:Modal-logic-of-set-theoretic-potentialism,HamkinsWoloszyn2024:Modal-model-theory}. I introduce railway switches and the railyard terminology in this article. See also the approach to the modal logic of submodels in~\cite{SavelievShapirovsky2016:On-modal-logic-of-submodels, SavelievShapirovsky2018:On-modal-logics-of-model-theoretic-relations}.

Let me review some of the basics here, following~\cite{HamkinsLinnebo2022:Modal-logic-of-set-theoretic-potentialism}. A \emph{potentialist system} is a Kripke model of first-order structures in a common language, whose accessibility relation refines the substructure relation, so that if world $M$ accesses world $N$, then $M$ is a substructure of $N$. Here, I shall be concerned with the potentialist systems consisting of the models of $\PA$ or of $\PAp$ under the various natural accessibility relations: extension, end-extension, $\Sigma_n$-elementary extension and so forth.

In any potentialist system $\mathcal{M}$, an assertion $\varphi(p_0,\ldots,p_n)$ in the propositional modal language $\mathcal{P}^{\possible}$ (recall the various languages introduced at the end of section \ref{Section.Models-of-arithmetic-as-potentialist-systems}), with propositional variables $p_i$, is \emph{valid} at a world $M$ for a collection $S$ of assertions, if $M\satisfies\varphi(\psi_0,\ldots,\psi_n)$ for every possible substitution $p_i\mapsto\psi_i$ of those propositional variables by assertions $\psi_i\in S$. The validity concept makes sense for assertions $\varphi(p_0,\ldots,p_n)$ in the much broader propositionally-expanded potentialist language $\mathcal{L}^{\possible}(p_0,\ldots,p_n)$, equipping the full potentialist language $\mathcal{L}^{\possible}$ also with propositional variables, although it is common to focus on the propositional modal language $\mathcal{P}^{\possible}$, where the resulting modal theory might be one of the well-known theories.

As I mentioned in the introduction, the modal theory \theoryf{S4} is obtained from the axioms (K) $\necessary(p\implies q)\implies(\necessary p \implies\necessary q)$, (S) $\necessary p\implies p$, (4) $\necessary p\implies \necessary\necessary p$ and (Duality) $\neg\possible p\iff \necessary\neg p$, by closing under necessitation, tautologies and modus ponens. This modal theory, as noted there for the end-extension relation, is easily seen to be valid in every potentialist system with respect to any collection of assertions. It is also easy to see that the {\df converse Barcan} formula $\necessary\forall x\, p\implies\forall x\necessary p$ is valid in all potentialist systems, having substitution instances
 $$\necessary\forall x\, \psi(x)\implies\forall x\necessary \psi(x).$$
The modal theory \theoryf{S4.2} arises by augmenting \theoryf{S4} with the axiom (.2) $\possible\necessary p\to\necessary\possible p$, which is valid in any Kripke model whose accessibility relation is convergent. The modal theory \theoryf{S4.3} arises by including the axiom (.3) $(\possible p\wedge\possible q)\implies\possible[(p\wedge\possible q)\vee(q\wedge\possible p)]$, which is valid in any linearly preordered frame. The modal theory \theoryf{S5} arises by augmenting \theoryf{S4} with the axiom (5) $\possible\necessary p\to p$.

Modal logicians are familiar with the elementary fact that \theoryf{S5} is valid in a Kripke frame $F$, meaning that it is valid in all Kripke models having $F$ as its underlying accessibility relation, if and only if $F$ is an equivalence relation.

But I should like to emphasize that potentialism is not really about this kind of frame semantics, since we are studying natural Kripke models, such as those arising from the models of arithmetic, and they come along with their accessibility relations, such as they are; we shall not be constructing other Kripke models upon those same frames. It can definitely happen that a Kripke model validates \theoryf{S5}, without its frame being an equivalence relation---for example, \theoryf{S5} is valid in any reflexive Kripke model where all worlds have the same atomic propositional truths, regardless of the underlying frame.
% (for sentential substitutions) in the potentialist system consisting of all extensions of a fixed model of the arithmetic maximality principle.

For the potentialist systems under consideration in this article, it is generally easy to observe that a given modal theory is valid, in the systems in which it is valid. In this sense, lower bounds are often cheaply found. Meanwhile, what is usually much more difficult is to establish upper bounds on the modal validities of a system. For example, in the main theorem of this article, I will show that the validities of arithmetic potentialism are exactly \theoryf{S4}. It was an easy exercise to show that \theoryf{S4} is valid; what is much harder is to show that no modal assertion outside \theoryf{S4} is valid. And similar issues with the upper bounds arise in essentially all the results of section~\ref{Section.Modal-logic-of-arithmetic-potentialism}.

Regarding upper bounds on the modal validities, a principal advance of my work with Benedikt \Lowe~\cite{HamkinsLoewe2008:TheModalLogicOfForcing}, further developed in~\cite{HamkinsLeibmanLoewe2015:StructuralConnectionsForcingClassAndItsModalLogic} and used in~\cite{HamkinsLinnebo2022:Modal-logic-of-set-theoretic-potentialism}, was the observation that one can often place upper bounds on the modal validities of a system by observing that it admits certain kinds of easily-understood control statements, such as switches, buttons, ratchets and railyards. Because these control statements concern only the worlds in the potentialist system and are stated in the object language $\Larith$, rather than in the modal language, in practice one can often thereby determine the modal validities of a system by using expertise only in the object theory of those structures, rather than expertise in modal logic.

A statement $\sigma$ is a \emph{switch} in a Kripke model $\mathcal{W}$, if $\possible\sigma$ and $\possible\neg\sigma$ are true at every world of $\mathcal{W}$. A collection of switches $\sigma_i$ is \emph{independent}, if every world can access another world realizing any desired finite on/off pattern for those switches.

\begin{theorem}\label{Theorem.Switches-S5}
  If $\mathcal{W}$ is Kripke model and world $W$ admits arbitrarily large finite collections of independent switches, then the propositional modal assertions valid at $W$ are contained in the modal theory \theoryf{S5}, with respect to the language in which the switches are expressed.
\end{theorem}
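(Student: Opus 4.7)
The plan is to prove the contrapositive: if $\varphi(p_0,\ldots,p_n)$ is a propositional modal assertion not in \theoryf{S5}, then there is a substitution of the $p_i$ by assertions in the switch language under which $\varphi$ fails at $W$. I begin by invoking the finite model property of \theoryf{S5}: such a $\varphi$ admits a counter-model of the form $(C,R,V)$ with $C$ finite, $R=C\times C$ the universal relation on $C$, and $\varphi$ false at some $w^*\in C$. Two worlds of an \theoryf{S5} cluster with the same propositional valuation are modally indistinguishable, so I may assume that the worlds of $C$ carry pairwise distinct truth valuations $t_1,\ldots,t_m\in\{0,1\}^{n+1}$, with $m\leq 2^{n+1}$, and I say $w^*$ has valuation $t_{j^*}$.

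Next, using the hypothesis on $W$, I select $K=\lceil\log_2 m\rceil$ independent switches $s_1,\ldots,s_K$ at $W$. Let $P_W\in\{0,1\}^K$ denote the current pattern of these switches at $W$, and fix any surjection $\mu\colon\{0,1\}^K\twoheadrightarrow\{1,\ldots,m\}$ with $\mu(P_W)=j^*$. Define the substituents
$$\tau_i \ = \ \bigvee_{\substack{Q\in\{0,1\}^K\\ t_{\mu(Q)}(p_i)=1}}\Bigl(\bigwedge_{l:\,Q(l)=1}s_l\ \wedge\ \bigwedge_{l:\,Q(l)=0}\neg s_l\Bigr),$$
which are Boolean combinations of the switches and hence lie in the switch language. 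By construction, at any world $W'$ the truth value of $\tau_i$ is exactly the $i$th coordinate of $t_{\mu(P_{W'})}$, where $P_{W'}$ is the switch pattern at $W'$.

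I now set up a one-sided bisimulation between worlds reachable from $W$ and worlds of $C$, by declaring $W'\sim v_j$ just in case the tuple of truth values of $(\tau_0,\ldots,\tau_n)$ at $W'$ equals $t_j$. The zig condition is automatic: any successor $W''$ of $W'$ has a switch pattern $P_{W''}$ and therefore satisfies $W''\sim v_{\mu(P_{W''})}$, which is accessible from $v_j$ inside the cluster. For the zag condition, given any $v_{j'}\in C$, surjectivity of $\mu$ yields some $Q\in\{0,1\}^K$ with $\mu(Q)=j'$, and then the independence of the switches---which by hypothesis holds at every world, in particular at $W'$---produces a successor $W''$ of $W'$ whose switch pattern is exactly $Q$, giving $W''\sim v_{j'}$. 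A standard induction on modal formulas in the variables $p_0,\ldots,p_n$ then shows that the bisimulation preserves truth under the substitution $p_i\mapsto\tau_i$. Since $\mu(P_W)=j^*$ ensures $W\sim w^*$, and $\varphi$ fails at $w^*$, we conclude that $W\not\satisfies\varphi(\tau_0,\ldots,\tau_n)$, as desired.

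The main conceptual obstacle is the asymmetry between the \theoryf{S5} cluster (whose accessibility is an equivalence relation) and the potentialist system $\mathcal{W}$ (whose accessibility is typically only a preorder), and the fact that the zag condition ordinarily requires a symmetric capacity to realize any pattern. This is handled cleanly by the one-sided bisimulation: I only need to simulate worlds reachable forward from $W$, and the zag step succeeds because independence of the switches is stipulated at every world, not just at $W$. The other technical requirement---having enough independent switches to index the at most $2^{n+1}$ worlds of the cluster counter-model---is exactly what the hypothesis that $W$ admits arbitrarily large finite collections of independent switches provides.
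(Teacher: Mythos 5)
Your proof is correct and follows essentially the same strategy as the paper's: encode each world of the finite \theoryf{S5} cluster counter-model by a switch pattern, substitute the resulting Boolean combinations of switches for the propositional variables, and show (by bisimulation / structural induction) that the potentialist system simulates the cluster. Your two minor refinements---using a surjection $\mu$ onto the cluster instead of duplicating worlds to pad it to a power of two, and explicitly choosing $\mu$ so that $W$'s current switch pattern maps to the failing node $w^*$---are sound; the second in fact tightens a small gap in the paper's prose, which only notes that the substitution instance fails at \emph{some} world of $\mathcal{W}$ rather than explicitly anchoring the failure at $W$ itself.
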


\begin{proof}
This idea goes back to~\cite{HamkinsLoewe2008:TheModalLogicOfForcing} (see also~\cite{HamkinsLeibmanLoewe2015:StructuralConnectionsForcingClassAndItsModalLogic}), and it is proved explicitly in~\cite[theorem~3]{HamkinsLinnebo2022:Modal-logic-of-set-theoretic-potentialism}. For completeness, let me review the essential idea. If a statement $\varphi(p_0,\ldots,p_n)$ is not part of S5, then it fails in some finite propositional Kripke model $M$ whose underlying frame is the complete relation, where all worlds access all others, and each world $w\in M$ gives truth values to the propositional variables $p_i$.
\begin{figure}[h]\label{Figure.cluster}
\newcommand{\pentacluster}{
    \foreach\t in {0,...,4} {\pgfmathsetmacro\c{25*\t}
                            \draw (18+72*\t:1) node[circle,draw,red!\c!blue] (\t) {};}
    \foreach \r/\s in {0/1,1/2,2/3,3/4,4/0} {\draw[<->,>=stealth] (\r) edge[bend right=20] (\s);}
    \foreach \r/\s in {0/2,2/4,4/1,1/3,3/0} {\draw[<->,>=stealth] (\r) edge[bend right=10] (\s);}
    }
\begin{tikzpicture}
 \pentacluster
\end{tikzpicture}
\caption{A cluster of mutually accessible worlds}
\end{figure}
By duplicating worlds, if necessary, I may assume that there are $2^m$ worlds in $M$. Let $\sigma_i$ be a family of $m$ independent switches in $\mathcal{W}$. For each $t<2^m$, let $\Phi_t$ assert that the pattern of switches conforms with the binary digits of $t$. This provides a partition of the worlds of $\mathcal{W}$ into $2^m$ classes, corresponding to the worlds of $M$. Next, for each propositional variable $p$ appearing in $\varphi$, let $\psi_p=\bigvee\set{\Phi_t\mid (M,t)\satisfies p}$. This statement is true at a world $W$ in $\mathcal{W}$ if and only if $W$ corresponds to a world $t$ in $M$ at which $p$ holds. By induction, one can now establish that
 $$W\satisfies_{\mathcal{W}}\phi(\psi_{p_0},\dots,\psi_{p_n})\qquad\Iff\qquad(M,t)\satisfies\phi(p_0,\dots,p_n),$$
whenever $W$ is a world in $\mathcal{W}$ corresponding to node $t$ in $M$, which is to say that $W\satisfies\Phi_t$. In this way, truth in the propositional Kripke model $M$ is simulated in the Kripke model $\mathcal{W}$. In particular, since $\varphi$ failed at a world of $M$, it follows that $\varphi(\psi_{p_0},\ldots,\psi_{p_n})$ fails at a world of $\mathcal{W}$, and so $\varphi$ is not valid in $\mathcal{W}$. So the modal validities of $\mathcal{W}$ are contained within S5.
\end{proof}

In practice, another convenient way to handle independent switches is with the concept of a \emph{dial}, which is a sequence of statements $d_0,\ldots,d_n$, such that every world satisfies exactly one of the statements and every world can access a world in which any desired one of the dial statements is true. So from any world, you can set the dial to any value that you like, by moving to a suitable accessible world. By considering the binary digits of the dial indices, it is not difficult to see that a Kripke model $\mathcal{W}$ admits arbitrarily large finite collections of independent switches if and only if it admits arbitrarily large finite dials (see~\cite[theorem~4]{HamkinsLinnebo2022:Modal-logic-of-set-theoretic-potentialism}). We therefore conclude:

\begin{theorem}\label{Theorem.Dials-S5}
  If $\mathcal{W}$ is Kripke model and world $W$ admits arbitrarily large dials, then the propositional modal assertions valid at $W$ are contained in the modal theory S5, with respect to the language in which the dials are expressed.
\end{theorem}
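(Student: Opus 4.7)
The plan is to reduce the statement to Theorem~\ref{Theorem.Switches-S5}, by showing that the existence of arbitrarily large dials at $W$ entails the existence of arbitrarily large finite collections of independent switches at $W$, in the same language.

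First I would fix $m$ and produce $m$ independent switches from a dial $d_0,\ldots,d_{2^m-1}$ of size $2^m$. For each $i<m$, define
$$\sigma_i\quad:=\quad\bigvee\set{d_t\st \text{the }i^{\text{th}}\text{ binary digit of }t\text{ is }1}.$$
Since every world of $\mathcal{W}$ satisfies exactly one of the dial statements $d_t$, the truth value of $\sigma_i$ at a world is determined precisely by the $i^{\text{th}}$ binary digit of its dial index, and in particular both $\possible\sigma_i$ and $\possible\neg\sigma_i$ hold at every world (using that each $d_t$ is attainable from every world). For any desired on/off pattern $(b_0,\ldots,b_{m-1})\in\{0,1\}^m$, let $t=\sum_{i<m}b_i\cdot 2^i$; by the dial property, from any world one can access a world satisfying $d_t$, and at that world the pattern of $\sigma_i$ truth values is exactly $(b_0,\ldots,b_{m-1})$. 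Hence the $\sigma_i$ form an independent family of switches.

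Since $W$ admits arbitrarily large dials, this construction yields arbitrarily large finite families of independent switches at $W$, all expressed in the same language as the dials (just finite Boolean combinations of the $d_t$). Applying Theorem~\ref{Theorem.Switches-S5} then gives $\Val(W,\mathcal{L})\of\theoryf{S5}$ for the language $\mathcal{L}$ in which the dials are expressed, completing the proof.

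There is no real obstacle here; the only substantive point is the bookkeeping to see that the $\sigma_i$ built from the $d_t$ genuinely form an \emph{independent} family, which is immediate from the defining property of a dial. The remark in the paragraph preceding the theorem already notes the equivalence via binary digits, and our proof simply carries out that reduction rigorously.
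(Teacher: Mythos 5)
Your proof is correct and follows essentially the same route the paper intends: the paper's preceding paragraph already indicates the binary-digit reduction from dials to independent switches and then states the theorem as an immediate consequence of Theorem~\ref{Theorem.Switches-S5}, which is exactly the reduction you carry out explicitly.
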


Further such connections between modal logics and control statements are provided in~\cite{HamkinsLeibmanLoewe2015:StructuralConnectionsForcingClassAndItsModalLogic}. For example, if a potentialist system admits arbitrarily large families of independent buttons and switches, then the modal validities are contained within \theoryf{S4.2}; if the system admits a long ratchet, then the validities are contained within \theoryf{S4.3}; if it admits arbitrarily large independent families of weak buttons and switches, then the validities are contained within \theoryf{S4.tBA}, the logic of topless Boolean pre-algebras; and so on with other instances.

Let me now introduce and consider here the concept of a \emph{railway switch}, which is a statement $r$ such that $\possible\necessary r$ and $\possible\necessary\neg r$ at a world where it is not yet switched, but becomes switched when $\necessary r$ or $\necessary\neg r$ holds. The train goes one way or the other, but afterwards, it is too late to change tracks.

More generally, a \emph{railyard} is an assemblage of such railway switches. Specifically, if $\mathcal{W}$ is a Kripke model of possible worlds and $T$ is a finite pre-tree (a pre-order whose natural quotient is a tree), such as the one pictured figure~\ref{Figure.Pretree}, then a \emph{railyard} labeling for $T$, or a \emph{$T$-labeling} in the terminology of~\cite{HamkinsLeibmanLoewe2015:StructuralConnectionsForcingClassAndItsModalLogic}, based at world $W$ is an assignment $t\mapsto r_t$ of the nodes of the tree $t\in T$ to assertions $r_t$ in the language of the worlds of $\mathcal{W}$, such that every world of $\mathcal{W}$ satisfies exactly one of the statements $r_t$, the original world $W$ satisfies $r_{t_0}$ for an initial node $t_0$ of the tree, and whenever any $r_t$ is true in a world $U\in\mathcal{W}$, then $U\satisfies\possible r_s$ if and only if $t\leq s$ in $T$. In other words, the assertions $r_t$ partition the worlds of $\mathcal{W}$ in such a way that makes possibility in $\mathcal{W}$ look the same as in $T$. In particular, if the pretree $T$ has nontrivial branching, then each of the labels $r_t$ on the higher nodes will be a railway switch, becoming necessary if you branch into or beyond that cluster and impossible if you branch away from it. In a sense, the finite pre-tree $T$ is realized as a quotient of the Kripke model $\mathcal{W}$, which could have many more worlds or even infinitely many, as it does for the cases in which we are interested.

The existence of sufficient railyard labelings for a potentialist system allows us to conclude that its validities are exactly \theoryf{S4}.

\begin{theorem}\label{Theorem.Pre-tree-labels-give-S4}
 Suppose that $\mathcal{W}$ is a potentialist system that admits a railyard labeling for every finite pre-tree $T$---and it suffices to handle only pre-trees with all clusters the same size and all branching clusters having the same degree---then the potentialist validities of $\mathcal{W}$ are exactly \theoryf{S4}, with respect to the language in which the labeling assertions are expressed.
\end{theorem}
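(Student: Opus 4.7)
The plan is to follow the general template for bounding modal validities by exhibiting sufficient control statements, as developed in \cite{HamkinsLoewe2008:TheModalLogicOfForcing, HamkinsLeibmanLoewe2015:StructuralConnectionsForcingClassAndItsModalLogic} and illustrated in the proof of Theorem~\ref{Theorem.Switches-S5}. The lower bound is already observed in the introduction: \theoryf{S4} is valid in every potentialist system, since the accessibility relation is reflexive and transitive. So the only real content is the upper bound: if $\varphi(p_0,\ldots,p_n)\notin\theoryf{S4}$, then some substitution instance of $\varphi$ by labeling assertions fails at some world of $\mathcal{W}$.

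First I would invoke the classical Segerberg completeness theorem for \theoryf{S4}: any $\varphi\notin\theoryf{S4}$ is refuted at some world $t_0$ of some finite Kripke model $(M,t_0)$ whose underlying frame is a finite pre-tree, that is, a finite tree of clusters. By duplicating nodes within clusters and padding branches with copies of existing subtrees, I may pass to an equivalent Kripke counterexample whose frame $T$ has all clusters of the same size and all branching clusters of the same out-degree; duplication does not alter propositional truth values nor the cluster-tree accessibility, so $\varphi$ still fails at $t_0$. This is precisely why the theorem need only assume that $\mathcal{W}$ admits labelings for such uniform finite pre-trees.

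Now fix a railyard labeling $t\mapsto r_t$ of $T$ based at some world $W_0\in\mathcal{W}$ with $W_0\satisfies r_{t_0}$. For each propositional variable $p$ appearing in $\varphi$, define the substitution
$$\psi_p\ =\ \bigvee\set{r_t\mid (M,t)\satisfies p}.$$
The central claim, established by induction on the complexity of a modal formula $\chi(p_0,\ldots,p_n)$, is the simulation
$$W\satisfies_{\mathcal{W}}\chi(\psi_{p_0},\ldots,\psi_{p_n})\quad\iff\quad(M,t)\satisfies\chi(p_0,\ldots,p_n),$$
whenever $W\in\mathcal{W}$ and $t\in T$ satisfy $W\satisfies r_t$. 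The atomic case is immediate from the definition of the $\psi_p$ together with the fact that the $r_t$ partition the worlds of $\mathcal{W}$, and the Boolean cases are routine. The decisive case is $\chi=\possible\chi'$, and it uses both directions of the defining biconditional of a railyard labeling: if $W\satisfies\possible\chi'(\vec\psi)$, then some accessible $U$ satisfies $r_s$ for a unique $s$; since $W\satisfies r_t$ and $W\satisfies\possible r_s$, the labeling condition forces $t\leq s$ in $T$, and by induction $(M,s)\satisfies\chi'$, hence $(M,t)\satisfies\possible\chi'$. Conversely, if $(M,s)\satisfies\chi'$ for some $s\geq t$, the labeling condition yields $W\satisfies\possible r_s$, so there is an accessible $U\satisfies r_s$, and by induction $U\satisfies\chi'(\vec\psi)$, so $W\satisfies\possible\chi'(\vec\psi)$.

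Applied to $\chi=\varphi$ at $W=W_0$ and $t=t_0$, the simulation shows that the substitution instance $\varphi(\psi_{p_0},\ldots,\psi_{p_n})$ fails at $W_0$, so $\varphi$ is not valid at $W_0$ in $\mathcal{W}$. Combined with the lower bound, this identifies the potentialist validities as exactly \theoryf{S4}. The principal obstacle is arranging the $\possible$ case of the induction so that both halves of the biconditional $U\satisfies\possible r_s\iff t\leq s$ in the railyard definition can be deployed; everything else is bookkeeping, provided the uniformization of the pre-tree counterexample and the pre-tree completeness of \theoryf{S4} are invoked cleanly so that the railyard labeling hypothesis applies to an appropriate $T$.
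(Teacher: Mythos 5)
Your proof is correct and follows essentially the same route as the paper: invoke pre-tree completeness of \theoryf{S4}, uniformize the counterexample frame by duplication, set $\psi_p=\bigvee\set{r_t\mid (M,t)\satisfies p}$, and prove the simulation biconditional by induction on modal formulas. You spell out the $\possible$-case of the induction more explicitly than the paper does, but the decomposition and key ideas are identical.
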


\begin{proof}
Note first that it is easy to see that \theoryf{S4} is valid in any potentialist system, since the accessibility relation is reflexive and transitive (see also~\cite[theorem~2]{HamkinsLinnebo2022:Modal-logic-of-set-theoretic-potentialism}). The converse result, that only \theoryf{S4} is valid, follows from the railyard labelings as an instance of~\cite[lemma~9]{HamkinsLeibmanLoewe2015:StructuralConnectionsForcingClassAndItsModalLogic} and the related general analysis there. Specifically, if a propositional modal assertion $\varphi(p_0,\ldots,p_n)$ with propositional variables $p_i$ is not in \theoryf{S4}, then there is a propositional Kripke model $M$, whose underlying frame is a finite pre-tree $T$, in which $\varphi$ fails, since this collection of frames is complete for \theoryf{S4}. Each world $w\in M$ gives truth values to the propositional variables $p_i$. By duplicating worlds if necessary, we may assume that all the clusters of $T$ have the same size and all branching clusters branch with the same degree. Let $t\mapsto r_t$ be the railyard labeling of $T$ for $\mathcal{W}$. For each propositional variable $p$ appearing in $\varphi$, let $\psi_p=\bigvee\set{r_t\mid (M,t)\satisfies p}$, which is true in $\mathcal{W}$ at exactly the worlds corresponding to nodes in the pre-tree for which $p$ is true in $M$. One may now prove by induction on formulas $\phi$ in propositional modal logic that
 $$W\satisfies_{\mathcal{W}}\phi(\psi_{p_0},\dots,\psi_{p_n})\qquad\Iff\qquad(M,t)\satisfies\phi(p_0,\dots,p_n),$$
whenever $W$ is a world in $\mathcal{W}$ corresponding to node $t$ in $M$, which is to say that $W\satisfies r_t$. In this way, truth in the propositional Kripke model $M$ is simulated in the potentialist system $\mathcal{W}$. In particular, since $\varphi$ failed at a world of $M$, it follows that $\varphi(\psi_{p_0},\ldots,\psi_{p_n})$ fails at a world of $\mathcal{W}$, and so $\varphi$ is not valid in $\mathcal{W}$.
\end{proof}

\section{The modal logic of arithmetic potentialism}\label{Section.Modal-logic-of-arithmetic-potentialism}

I am now finally ready to prove the main results of this article. Recall that $\PAp$ is a fixed consistent c.e.~extension of \PA---perhaps it is \PA\ itself. Let us begin with an easy observation that sets the overall bounds.

\begin{theorem}\label{Theorem.Validities-between-S4-S5}
 In the potentialist system consisting of the models of $\PAp$ under end-extension, the potentialist validities of any model $M$, with respect to assertions in the language of arithmetic $\Larith$ or any extension of it $\Larith^+$, contain \theoryf{S4} and are contained in \theoryf{S5}.
 $$\theoryf{S4}\quad\of\quad\Val_{\uppossible}(M,\Larith^+)\quad\of\quad\Val_{\uppossible}(M,\Larith)\quad\of\quad\theoryf{S5}$$
\end{theorem}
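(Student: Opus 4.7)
The proof decomposes naturally into the three containments in the displayed inclusion.

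For the leftmost containment $\theoryf{S4}\of\Val_{\uppossible}(M,\Larith^+)$, I would verify that each of the four $\theoryf{S4}$ axiom schemes --- (K) $\upnecessary(\varphi\to\psi)\to(\upnecessary\varphi\to\upnecessary\psi)$, (T) $\upnecessary\varphi\to\varphi$, (4) $\upnecessary\varphi\to\upnecessary\upnecessary\varphi$, and (Duality) $\neg\upnecessary\varphi\iff\uppossible\neg\varphi$ --- holds at $M$ whenever the propositional variables are replaced by formulas from $\Larith^+$, and that the set of valid assertions is closed under modus ponens and necessitation. This is essentially the calculation already carried out in the introduction for end-extensions, which uses only that the end-extension relation is reflexive and transitive and that the dual operator $\uppossible$ is the negation-conjugate of $\upnecessary$; the argument goes through verbatim for substitutions from any boolean-closed language containing $\Larith$.

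For the middle containment $\Val_{\uppossible}(M,\Larith^+)\of\Val_{\uppossible}(M,\Larith)$, I would note that this is a purely formal observation: since $\Larith\of\Larith^+$, the collection of substitution instances of a given propositional modal formula by $\Larith$-formulas is a subcollection of those by $\Larith^+$-formulas. Hence if a modal assertion holds under every $\Larith^+$-substitution at $M$, it holds in particular under every $\Larith$-substitution at $M$. (In this sense validity is antitone in the substitution language.)

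For the rightmost containment $\Val_{\uppossible}(M,\Larith)\of\theoryf{S5}$, I would invoke Theorem~\ref{Theorem.Switches-S5}, which asserts that if a world admits arbitrarily large finite collections of independent switches expressed in some language, then its validities in that language are bounded above by $\theoryf{S5}$. The required independent switches are supplied directly by Theorem~\ref{Theorem.Infinitely-many-independent-Orey-sentences}, which constructs an infinite list of Orey sentences $\sigma_0,\sigma_1,\sigma_2,\ldots$ in $\Larith$ such that from any model of $\PAp$ one can pass to an end-extension realizing any prescribed truth pattern on finitely many of them. These are precisely independent switches in the end-extension potentialist system, expressed in $\Larith$, so the hypothesis of Theorem~\ref{Theorem.Switches-S5} is met at $M$ and the upper bound follows.

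None of the three steps is a serious obstacle in its own right; the real content has already been absorbed by the earlier sections. The deepest input is the existence of the independent Orey-sentence switches in Theorem~\ref{Theorem.Infinitely-many-independent-Orey-sentences}, which ultimately rests on the universal algorithm (Theorem~\ref{Theorem.Universal-algorithm}). Thus the role of this theorem is to package those earlier results as the standard $\theoryf{S4}$--$\theoryf{S5}$ sandwich; sharpening the bounds and exhibiting models that realize each endpoint is what the subsequent main theorem will accomplish.
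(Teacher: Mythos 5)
Your proposal is correct and follows exactly the paper's own argument: \theoryf{S4} holds by the general potentialist reasoning from the introduction, and the \theoryf{S5} upper bound comes from the independent Orey-sentence switches of Theorem~\ref{Theorem.Infinitely-many-independent-Orey-sentences} via Theorem~\ref{Theorem.Switches-S5}. The middle containment is the same trivial antitonicity observation the paper leaves implicit, so there is nothing to add.
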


\begin{proof}
The modal theory \theoryf{S4} is valid generally in every potentialist system, as we have noted. For the upper bound, theorem~\ref{Theorem.Infinitely-many-independent-Orey-sentences} provides an infinite family of independent switches, which can be turned on and off so as to realize any desired finite pattern in an end-extension. It follows by theorem~\ref{Theorem.Switches-S5} that the modal validities of any model are contained within \theoryf{S5}.
\end{proof}

\begin{theorem}\label{Theorem.Arithmetic-potentialism-is-S4}
 In the potentialist system consisting of the models of $\PAp$ under end-extension, if $M$ is a model in which the universal algorithm enumerates the empty sequence, then the potentialist validities of $M$, with respect to sentences in the language of arithmetic, are exactly the assertions of \theoryf{S4}.
 $$\theoryf{S4}\quad=\quad\Val_{\uppossible}(M,\Larith)\quad\ofneq\quad\theoryf{S5}$$
\end{theorem}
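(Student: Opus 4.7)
The lower bound $\theoryf{S4}\of\Val_{\uppossible}(M,\Larith)$ is already supplied by theorem~\ref{Theorem.Validities-between-S4-S5}, so the plan is to establish the matching upper bound $\Val_{\uppossible}(M,\Larith)\of\theoryf{S4}$. The strategy will be to invoke theorem~\ref{Theorem.Pre-tree-labels-give-S4}: it suffices to exhibit, for every finite pre-tree $T$ with all clusters of some uniform size $c$ and all branching clusters of some uniform degree $b$, a railyard $T$-labeling $t\mapsto r_t$ based at $M$ whose labels are $\Larith$-sentences.

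To construct the labeling I will use the universal algorithm $e$ from theorem~\ref{Theorem.Universal-algorithm}. First I will define a fixed computable function $\phi$ that parses any finite sequence $s$ of numbers into a node of $T$: start at the root cluster in a distinguished intra-cluster position, and read the entries of $s$ left to right, interpreting each entry $x$ as an instruction---if $b\leq x<b+c$, re-set the intra-cluster position within the current cluster to $x-b$; if $x<b$ and the current cluster has an $x^{\text{th}}$ child, move to that child with intra-cluster position reset; and otherwise leave the state unchanged. The node reached after processing all entries is $\phi(s)$, and I will let $r_t$ be the $\Larith$-sentence expressing ``$\phi$ applied to the finite sequence enumerated by $e$ equals $t$.'' Since $T$ is a fixed standard-finite object coded by a numeral and $\phi$ is a fixed computable function, these are indeed arithmetic sentences, and they partition the worlds of the potentialist system because $\phi$ is functional.

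With this labeling in hand, the verification of the railyard conditions will run as follows. Since by hypothesis the universal sequence in $M$ is empty, $\phi(\emptyset)$ is the initial node and $M\satisfies r_{t_0}$ as required. For forward accessibility, if $W\satisfies r_t$ and $U$ is a $\PAp$ end-extension of $W$, then the universal sequence $s_U$ end-extends $s_W$ (any stage successful in $W$ remains successful in $U$, with the same addition, so the additional stages in $U$ only lengthen the sequence), and the transition rules of $\phi$ are designed so that appending further entries can only keep the state within the same cluster or descend to a child cluster; hence $\phi(s_U)\geq\phi(s_W)=t$ in $T$. For backward accessibility, given any $s\geq t$ in $T$, I will fix a standard-finite path from $t$ to $s$ in $T$, translate it into a standard-finite sequence $\bar x\in M$ of $\phi$-instructions, and then apply statement~(2) of theorem~\ref{Theorem.Universal-algorithm} to produce a $\PAp$ end-extension $U$ of $W$ in which $e$ enumerates exactly $s_W\concat\bar x$, so that $\phi(s_U)=s$ and $U\satisfies r_s$.

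The subtle point I expect to be the main obstacle is handling nonstandard universal sequences: in a nonstandard extension, $s_W$ may have nonstandard length, and $\phi(s_W)$ must be computed by an internal recursion along such a sequence. The computation remains well-defined because $T$ is a fixed standard-finite object whose code is a numeral and $\phi$'s transitions never leave $T$, so the resulting state is necessarily one of the standardly many nodes of $T$; moreover, the absence of any ``up'' transitions in $\phi$ automatically enforces the required asymmetry of the pre-tree, while within a cluster the re-setting instructions witness mutual accessibility. Finally, the strict containment $\Val_{\uppossible}(M,\Larith)\ofneq\theoryf{S5}$ is immediate from $\theoryf{S4}\ofneq\theoryf{S5}$, since for instance $\possible\necessary p\implies p$ is not a theorem of \theoryf{S4}.
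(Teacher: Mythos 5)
Your proof is correct and follows essentially the same approach as the paper: it reduces to theorem~\ref{Theorem.Pre-tree-labels-give-S4} and produces the required railyard labeling by interpreting the universal finite sequence of theorem~\ref{Theorem.Universal-algorithm} as a walk through the pre-tree, using the extension property to realize any forward move. Your explicit parsing function $\phi$ (instructions $x<b$ descend to a child, $b\le x<b+c$ set the intra-cluster position, all else ignored) is a cleaner formalization of the paper's scheme, which instead tracks the subsequence of numbers below a branching bound $k$ for tree navigation and reads the last number $\geq k$ modulo $m$ for the cluster position.
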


\begin{proof}
By theorem~\ref{Theorem.Pre-tree-labels-give-S4}, it suffices to provide railyard labelings for any finite pre-tree. Consider such a pre-tree $T$, such as the one pictured below in figure~\ref{Figure.Pretree}. Each world in a cluster can access all the other worlds in that cluster and any world in any higher cluster in the tree.
\begin{figure}[h]
\newcommand{\pentacluster}{\foreach\t in {0,...,4} {\draw (18+72*\t:1) node[circle,draw] (\t) {};}
    \foreach \r/\s in {0/1,1/2,2/3,3/4,4/0} {\draw[<->,>=stealth] (\r) edge[bend right=20] (\s);}
    \foreach \r/\s in {0/2,2/4,4/1,1/3,3/0} {\draw[<->,>=stealth] (\r) edge[bend right=10] (\s);}
    }
\newcommand{\tricluster}{\foreach\t in {0,1,2} {
  %\pgfmathsetmacro\c{random(0,100)}
  \setrandomcolor
    \draw (30+120*\t:.8) node[circle,thick,draw,scale=.6,randomcolor!85!red] (\t) {};}
    \foreach \r/\s in {0/1,1/2,2/0} {\draw[{<[scale=.5]}-{>[scale=.5]},>=Stealth] (\r) edge[bend right=30] (\s);}
    }
\begin{tikzpicture}[scale=.4]
\begin{scope}
  \setrandomcolor
    \draw node[circle,dotted,draw,scale=3,fill=yellow,fill opacity=.1] (root) {};
    \tricluster
\end{scope}
\begin{scope}[shift={(-4,4)}]
  \setrandomcolor
    \draw node[circle,dotted,draw,scale=3,fill=yellow,fill opacity=.1] (left) {};
    \tricluster\end{scope}
\begin{scope}[shift={(4,4)}]
  \setrandomcolor
    \draw node[circle,dotted,draw,scale=3,fill=yellow,fill opacity=.1] (right) {};
    \tricluster\end{scope}
\begin{scope}[shift={(-6,8)}]
  \setrandomcolor
    \draw node[circle,dotted,draw,scale=3,fill=yellow,fill opacity=.1] (leftleft) {};
    \tricluster\end{scope}
\begin{scope}[shift={(-2,8)}]
  \setrandomcolor
    \draw node[circle,dotted,draw,scale=3,fill=yellow,fill opacity=.1] (leftright) {};
    \tricluster\end{scope}
\begin{scope}[shift={(2,8)}]
  \setrandomcolor
    \draw node[circle,dotted,draw,scale=3,fill=yellow,fill opacity=.1] (rightleft) {};
    \tricluster\end{scope}
\begin{scope}[shift={(6,8)}]
  \setrandomcolor
    \draw node[circle,dotted,draw,scale=3,fill=yellow,fill opacity=.1] (rightright) {};
    \tricluster\end{scope}
\draw[->,>=stealth] (root) edge[bend left=10] (left)
 (left) edge[bend left=10] (leftleft)
 (left) edge[bend right=10] (leftright)
 (root) edge[bend right=10] (right)
 (right) edge[bend left=10] (rightleft)
 (right) edge[bend right=10] (rightright);
\end{tikzpicture}
\caption{A finite pre-tree $T$}\label{Figure.Pretree}
\end{figure}

\goodbreak
To produce the railyard labeling, assign each node $t$ in the tree $T$ to an assertion $r_t$ in the language of arithmetic that makes a certain specific claim about the behavior of the universal algorithm $e$. Specifically, let $k$ be such that this tree is at most $k$-branching, and let $m$ be such that all the clusters have size at most $m$. Consider the sequence enumerated by the universal algorithm $e$. It enumerates a finite list of numbers, and from that list we produce the subsequence of numbers consisting of those numbers less than $k$, which we interpret as a way of climbing the tree, a way of successively choosing amongst the branching nodes in $T$, so as to arrive at a particular cluster of $T$ (ignore any additional numbers, if there are too many), plus the last number on the universal sequence that is $k$ or larger (default to $k$, if there is none). By considering this final number modulo $m$, we may interpret it as picking a particular node in the cluster at which we arrived (and simply group together some of the residues if the cluster has fewer than $m$ nodes). In this way, we can assign to each node $t$ of the pre-tree $T$ a statement $r_t$ about the nature of the sequence enumerated by the universal algorithm, in such a way that a world $W$ satisfying $r_t$ will satisfy $\uppossible r_s$ just in case $t\leq s$ in $T$. The reason is that by theorem~\ref{Theorem.Universal-algorithm}, any sequence in any such $W$ can be extended by any desired finite sequence, and we can add to the given sequence as it is computed in $W$ so as to specify in an extension $U$ that we climb to and arrive at any desired node $s$ in the tree. Any model $M$ in which the universal sequence is empty will correspond to an initial node of the tree. Thus, the universal algorithm provides a labeling of any finite pre-tree, and so the potentialist modal validities of $W$, with respect to sentences in the language of set theory, is exactly \theoryf{S4}.
\end{proof}

In particular, if $\PAp$ is true in the standard model $\N$, as in the central case where $\PAp$ is simply \PA\ itself, then the modal validities of $\N$ are exactly \theoryf{S4} for sentences.

\begin{theorem}\label{Theorem.Arithmetic-potentialism-with-parameters}
 In the potentialist system consisting of the models of $\PAp$ under end-extension, the potentialist validities of any model $M\satisfies\PAp$, with respect to assertions in the language of arithmetic allowing parameters from $M$, are exactly the assertions of \theoryf{S4}.
 Indeed, for every model $M$, there is a single parameter $u\in M$, such that the modal validities for arithmetic potentialism over $M$, with respect to assertions in the language of arithmetic using parameter $u$ only, is exactly \theoryf{S4}.
$$\theoryf{S4} \ = \ \Val_{\uppossible}(M,\Larith(M)) \
        = \ \Val_{\uppossible}(M,\Larith(u)) \
        \of \ \Val_{\uppossible}(M,\Larith) \ \of \ \theoryf{S5}.$$
\end{theorem}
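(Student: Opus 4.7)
The outer bounds $\theoryf{S4}\of\Val_{\uppossible}(M,\Larith(M))$ and $\Val_{\uppossible}(M,\Larith)\of\theoryf{S5}$ are immediate from Theorem~\ref{Theorem.Validities-between-S4-S5}, and the inclusions $\Val_{\uppossible}(M,\Larith(M))\of\Val_{\uppossible}(M,\Larith(u))\of\Val_{\uppossible}(M,\Larith)$ are formal, since a larger class of admissible substitution instances imposes a stronger demand on validity and hence yields fewer validities. It therefore suffices to sharpen the upper bound by producing, for a single well-chosen parameter $u\in M$, railyard labelings based at $M$ for every finite pre-tree, with labels expressed as formulas of $\Larith(u)$; an appeal to Theorem~\ref{Theorem.Pre-tree-labels-give-S4} will then close the chain at $\theoryf{S4}$.

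The plan is to generalize the universal-algorithm labeling of Theorem~\ref{Theorem.Arithmetic-potentialism-is-S4} by what one might call a ``reset trick.'' Let $s$ be the (possibly nonstandard) finite sequence currently enumerated in $M$ by the universal algorithm $e$ of Theorem~\ref{Theorem.Universal-algorithm}, and take $u\in M$ to be its length---a single element, serving as the only required parameter. Since every $\PAp$-end-extension $N\fo M$ enumerates a universal sequence extending $s$, the ``tail beyond position $u$'' is well-defined in each such $N$. For a given finite pre-tree $T$ with uniform branching degree $k$ and cluster size $m$ (both standard constants absorbed into the formula), I would then apply exactly the earlier encoding, but to the tail rather than to the whole sequence: entries $<k$ in the tail pick successive branches climbed in $T$, while the last tail entry $\geq k$, reduced modulo $m$, selects the position within the resulting cluster, with fixed defaults chosen to give a total encoding when the tail is too short or branching data runs off the top of $T$. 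This yields for each $t\in T$ an $\Larith(u)$-formula $r_t(u)$, and the family $\set{r_t(u)\mid t\in T}$ partitions the $\PAp$-models in the cone above $M$.

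There remain the labeling conditions. Since the tail in $M$ is empty, $M\satisfies r_{t_0}(u)$ for the initial node $t_0$. For the central condition, fix $U\fo M$ with $U\satisfies r_t(u)$ and any $s\geq t$ in $T$: by the extension property of Theorem~\ref{Theorem.Universal-algorithm}, $U$ has a $\PAp$-end-extension in which the universal sequence is extended by any prescribed finite block past its current state, in particular the block of numbers coding the transition from $t$ up to $s$ under our encoding; the resulting end-extension satisfies $r_s(u)$. Conversely, any end-extension can only further extend the sequence, so the node encoded is $\geq t$ in $T$. Thus the family $\set{r_t(u)}$ is a railyard $T$-labeling based at $M$, and Theorem~\ref{Theorem.Pre-tree-labels-give-S4} delivers $\Val_{\uppossible}(M,\Larith(u))\of\theoryf{S4}$, forcing every inclusion in the stated chain through $\Val_{\uppossible}(M,\Larith)$ to be equality at $\theoryf{S4}$.

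The main obstacle I anticipate is purely combinatorial bookkeeping in defining the encoding: one must ensure that $r_t(u)$ truly gives a partition of the whole cone above $M$, that it is robust to nonstandard $u$ and to degenerate tails (empty, consisting only of entries $\geq k$, or containing branching instructions that would walk off the pre-tree), and that the induced quotient is isomorphic to $T$ rather than some larger pre-tree, so that reachability in the cone above $M$ matches reachability in $T$ exactly. Once the encoding is set up cleanly---with an honest ``overshoot'' convention mapping all aberrant tails into the top clusters of $T$---the reduction to Theorem~\ref{Theorem.Pre-tree-labels-give-S4} is mechanical.
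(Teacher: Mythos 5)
Your proposal is correct and follows essentially the same route as the paper: take $u$ to be the length of the universal finite sequence in $M$, ignore the first $u$ terms, and redo the railyard labeling of Theorem~\ref{Theorem.Arithmetic-potentialism-is-S4} on the tail, then close via Theorem~\ref{Theorem.Pre-tree-labels-give-S4}. The combinatorial bookkeeping you flag at the end is the same that is implicitly handled in the proof of Theorem~\ref{Theorem.Arithmetic-potentialism-is-S4}, so it poses no new difficulty here.
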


\begin{proof}
If parameters are allowed, we can carry out the previous argument in any model of $\PAp$, whether or not the universal sequence is empty in some model. Simply let $u$ be the length of the sequence enumerated by the universal algorithm in $M$, and consider the universal sequence as it might be extended beyond $u$. The idea is that beyond $u$, we again have the tree of possibilities, and we can perform the labeling of finite pre-trees as in theorem~\ref{Theorem.Arithmetic-potentialism-is-S4} by simply ignoring the first $u$ terms of the universal algorithm, leading to a new railyard assignment $t\mapsto r_t$ over $M$, where $r_t$ makes reference to parameter $u$. In this way, we will get $\Val\left(M,\Larith(u)\right)=\theoryf{S4}$, as desired.
\end{proof}

So we've seen how to ensure \theoryf{S4} for sentences in some models of arithmetic, showing that the lower bound of theorem~\ref{Theorem.Validities-between-S4-S5} is sharp. Let me now prove that the upper bound is sharp, by finding a model of $\PAp$ whose potentialist validities are exactly \theoryf{S5}.

A model of arithmetic $M\satisfies\PAp$ satisfies the \emph{arithmetic maximality principle}, if $\uppossible\upnecessary\sigma\to\sigma$ is true in $M$ for every arithmetic sentence $\sigma$. Thus, every sentence that is possibly necessary over $M$ is already true in $M$. In other words, $M$ satisfies the arithmetic maximality principle if and only if \theoryf{S5} is valid in $M$ with respect to sentences in the language of arithmetic. Since theorem~\ref{Theorem.Possibly-necessary-characterization} shows that $\uppossible\upnecessary\sigma$ is equivalent to $\xpossible\xnecessary\sigma$ for arithmetic assertions $\sigma$, the arithmetic maximality principle is equivalently formulated using either end-extensions or arbitrary extensions, and in the next section we will see that it is also equivalently formulated using conservative end-extensions $\consuppossible\consupnecessary\sigma\to\sigma$ or with  computably saturated end-extensions $\soliduppossible\solidupnecessary\sigma\to\sigma$, and others.

Since the arithmetic maximality principle is stated in terms of how a model $M$ relates to its extensions in the potentialist system of all models of arithmetic, it might seem at first that it should be a property of the model, rather than merely of the theory of the model. Nevertheless, the arithmetic maximality principle is revealed by the theory of the model.

\begin{theorem}
 In the potentialist system consisting of the models of $\PAp$ under end-extension, if a model $M$ satisfies the arithmetic maximality principle and $M\eleequiv N$, then $N$ also satisfies the arithmetic maximality principle.
\end{theorem}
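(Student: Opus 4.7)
The plan is to use Theorem~\ref{Theorem.Possibly-necessary-characterization} to reduce the modal assertion $\uppossible\upnecessary\sigma$ to a purely arithmetic condition, and then exploit elementary equivalence directly. By that theorem, for any model $W\satisfies\PAp$ and any arithmetic sentence $\sigma$, the statement $W\satisfies\uppossible\upnecessary\sigma$ is equivalent to the existence of a standard finite number $n$ such that $W\satisfies\Con(\PAp_k+\neg\Con(\PAp_n+\neg\sigma))$ for every standard finite $k$. Crucially, each of these consistency statements is a sentence in the language of arithmetic, while the quantifiers ``there exists standard $n$'' and ``for every standard $k$'' range over the external standard naturals, not over elements of the model.

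First I would fix an arbitrary arithmetic sentence $\sigma$ and assume $N\satisfies\uppossible\upnecessary\sigma$. Applying Theorem~\ref{Theorem.Possibly-necessary-characterization} in $N$ yields a standard witness $n$ such that the arithmetic sentence $\Con(\PAp_k+\neg\Con(\PAp_n+\neg\sigma))$ holds in $N$ for every standard $k$. Since $M\eleequiv N$, each of these arithmetic sentences holds in $M$ as well, so by the converse direction of Theorem~\ref{Theorem.Possibly-necessary-characterization}, applied in $M$ with the same $n$, we obtain $M\satisfies\uppossible\upnecessary\sigma$.

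Next, invoking the arithmetic maximality principle in $M$, we conclude $M\satisfies\sigma$; and since $\sigma$ is an arithmetic sentence and $M\eleequiv N$, we conclude $N\satisfies\sigma$. This verifies the implication $\uppossible\upnecessary\sigma\to\sigma$ in $N$ for the arbitrary sentence $\sigma$, establishing the arithmetic maximality principle in $N$. There is no essential obstacle: the entire content of the argument is packaged in Theorem~\ref{Theorem.Possibly-necessary-characterization}, which converts the second-order-looking modal predicate $\uppossible\upnecessary$ applied to a sentence into an externally indexed schema of arithmetic sentences; elementary equivalence is precisely the right tool to transfer such schemata.
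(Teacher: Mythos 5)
Your proof is correct and follows essentially the same route as the paper's: both reduce $\uppossible\upnecessary\sigma$ to the externally indexed family of arithmetic consistency statements from Theorem~\ref{Theorem.Possibly-necessary-characterization}, making the arithmetic maximality principle a property of the first-order theory of the model and hence invariant under elementary equivalence. You have merely unpacked the transfer argument in more detail than the paper does.
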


\begin{proof}
 If $M$ satisfies the arithmetic maximality principle, then in light of theorem~\ref{Theorem.Possibly-necessary-characterization}, this is visible is the theory of $M$ as follows. Since the maximality principle asserts $\uppossible\upnecessary\sigma\to\sigma$ for every sentence $\sigma$, it is necessary and sufficient that if there is a natural number $n$ such that for all $k$ the statement $\Con(\PAp_k+\neg\Con(\PAp_n+\neg\sigma))$ is in the theory, then $\sigma$ is also in the theory.
\end{proof}

Let me show that indeed there are models of the arithmetic maximality principle. It turns out that these are simply the models satisfying a maximal $\Sigma_1$ theory. A \emph{maximal} consistent $\Sigma_1$ theory over $\PAp$ is a theory $T$ consisting of $\Sigma_1$ sentences only, such that $\PAp+T$ is consistent and $T$ is maximal among such theories. In particular, for such a theory $T$ any $\Sigma_1$ assertion that is consistent with $\PAp+T$ is already an axiom of $T$. It is easy to construct such theories, simply by enumerating all $\Sigma_1$ assertions and then including them one at time, as long as this is consistent over $\PAp$. Indeed, every consistent $\Sigma_1$ theory over $\PAp$ is contained in maximal consistent $\Sigma_1$ theory over $\PAp$.

\goodbreak
\begin{theorem}\label{Theorem.Maximal-Sigma1-extension}
 For any model of arithmetic $M\satisfies\PAp$, the following are equivalent.
  \begin{enumerate}
    \item $M$ fulfills the arithmetic maximality principle.
    \item The $\Sigma_1$ theory of $M$ is a maximal consistent $\Sigma_1$ theory over $\PAp$.
  \end{enumerate}
 These models $M$ have potentialist validities described by
  $$\theoryf{S4}=\Val_{\uppossible}(M,\Larith(M))\ofneq\Val_{\uppossible}(M,\Larith)=\theoryf{S5}$$
\end{theorem}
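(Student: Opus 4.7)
The plan is to exploit the characterization of $\uppossible\upnecessary\sigma$ for arithmetic sentences $\sigma$ provided by Theorem~\ref{Theorem.Possibly-necessary-characterization}, which reveals that this modal statement is equivalent to a $\Sigma_1$ consistency condition over $M$. Once this bridge is in place, both directions of the equivalence (1)$\iff$(2) reduce to definition-chasing through Lemmas~\ref{Lemma.Possibility-characterization} and~\ref{Lemma.Necessity-characterization}.

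For (2)$\Rightarrow$(1), I would assume the $\Sigma_1$ theory of $M$ is a maximal consistent $\Sigma_1$ extension of $\PAp$, and suppose $M\satisfies\uppossible\upnecessary\sigma$. Theorem~\ref{Theorem.Possibly-necessary-characterization} produces a standard $n$ such that the $\Sigma_1$ sentence $\rho:=\neg\Con(\PAp_n+\neg\sigma)$ is consistent with $\PAp$ together with the $\Sigma_1$ theory of $M$. By maximality, $\rho$ must already belong to the $\Sigma_1$ theory of $M$, so $M\satisfies\neg\Con(\PAp_n+\neg\sigma)$. Lemma~\ref{Lemma.Necessity-characterization} then gives $M\satisfies\upnecessary\sigma$, whence $M\satisfies\sigma$.

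For the converse (1)$\Rightarrow$(2), I would take any $\Sigma_1$ sentence $\rho$ consistent with $\PAp$ plus the $\Sigma_1$ theory of $M$, and show $M\satisfies\rho$. By Lemma~\ref{Lemma.Possibility-characterization}, $M\satisfies\uppossible\rho$; since $\Sigma_1$ sentences are upward absolute across extensions, any end-extension realizing $\rho$ also satisfies $\upnecessary\rho$, and therefore $M\satisfies\uppossible\upnecessary\rho$. The arithmetic maximality principle then yields $M\satisfies\rho$, confirming that every $\Sigma_1$ sentence consistent with $\PAp$ over the $\Sigma_1$ theory of $M$ is already in that theory. (To show such models exist at all, one observes by a routine Zorn/enumeration argument that every consistent $\Sigma_1$ theory over $\PAp$ extends to a maximal one, and any model of that theory has exactly it as its $\Sigma_1$ theory, since a $\Sigma_1$ sentence not in the theory would be refuted by a $\Pi_1$ consequence.)

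For the validity claims, $\Val_{\uppossible}(M,\Larith(M))=\theoryf{S4}$ follows directly from Theorem~\ref{Theorem.Arithmetic-potentialism-with-parameters}, since the universal algorithm construction there does not depend on the $\Sigma_1$-maximality of $M$; only the length of the universal sequence in $M$ is needed as the parameter. The inclusion $\Val_{\uppossible}(M,\Larith)\of\theoryf{S5}$ is given by Theorem~\ref{Theorem.Validities-between-S4-S5}. For the reverse inclusion, note that \theoryf{S5} is obtained from \theoryf{S4} by adjoining the single axiom $(5)\;\possible\necessary p\to p$; its sentential substitution instances are precisely the instances of the arithmetic maximality principle, which (1) grants. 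Since the \theoryf{S4} axioms are valid in every potentialist system and \theoryf{S5}-validity is closed under necessitation and modus ponens, every \theoryf{S5} assertion is valid at $M$ for arithmetic sentences. The strict containment $\Val_{\uppossible}(M,\Larith(M))\ofneq\Val_{\uppossible}(M,\Larith)$ follows because \theoryf{S4}$\ofneq$\theoryf{S5}. The only conceptual obstacle lies in recognizing that the modal statement $\uppossible\upnecessary\sigma$ is arithmetically equivalent to a $\Sigma_1$ witness condition over $M$; this is exactly what Theorem~\ref{Theorem.Possibly-necessary-characterization} delivers, and once in hand, the rest of the proof is essentially formal.
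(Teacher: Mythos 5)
Your argument for the equivalence (1)$\iff$(2) is essentially the paper's own proof: the paper routes (2)$\Rightarrow$(1) directly through Lemma~\ref{Lemma.Necessity-characterization} inside the witnessing end-extension $N$ (rather than packaging the same content through Theorem~\ref{Theorem.Possibly-necessary-characterization} as you do), but the two are the same two-step application of the possibility/necessity lemmas and maximality, and your (1)$\Rightarrow$(2) matches the paper verbatim.

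One point in the validity claim deserves more care. You write that ``\theoryf{S5}-validity is closed under necessitation and modus ponens,'' but the set $\Val_{\uppossible}(M,\Larith)$ is not closed under necessitation as a matter of logic: $\necessary\varphi\in\Val_{\uppossible}(M,\Larith)$ requires $\varphi$ to be valid at every end-extension of $M$, not merely at $M$. So knowing that axiom (5) is valid at $M$ (i.e., that $M$ satisfies the maximality principle) does not by itself give all of \theoryf{S5}. What makes the argument work is the extra observation, following from the equivalence you just proved, that a maximal consistent $\Sigma_1$ theory over $\PAp$ can only be preserved---never strictly enlarged---by passing to a $\PAp$-model end-extension, so \emph{every} end-extension of $M$ also satisfies the maximality principle and hence also validates axiom (5). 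Once the \theoryf{S5} axioms are valid at $M$ and at all worlds accessible from $M$, closure of ``valid at all accessible worlds'' under modus ponens and necessitation does give all of \theoryf{S5} at $M$. This is a one-line fix, and the paper's own exposition is similarly terse here, but as written your sentence asserts a closure property that does not hold in general.
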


\begin{proof} Suppose a model of arithmetic $M$ satisfies a maximal $\Sigma_1$ theory over $\PAp$. If $M\satisfies\uppossible\upnecessary\sigma$, then there is an end-extension $N\satisfies\PAp$ satisfying $\upnecessary\sigma$, which means that $N\satisfies\neg\Con(\PAp_k+\neg\sigma)$ for some standard finite $k$ by lemma~\ref{Lemma.Necessity-characterization}. This is a $\Sigma_1$ assertion that is true in $N$, where all the $\Sigma_1$ assertions true in $M$ remain true. By maximality, therefore, this inconsistency statement must already be true in $M$. This implies $\sigma$ is necessary over $M$ and in particular $M\satisfies\sigma$, verifying this instance of the maximality principle.

Conversely, if $M$ satisfies the arithmetic maximality principle, then let me show that the $\Sigma_1$ theory of $M$ is a maximal extension of $\PAp$. Suppose that $\sigma$ is a $\Sigma_1$ assertion that is consistent with $\PAp$ plus the $\Sigma_1$ theory of $M$. By lemma~\ref{Lemma.Possibility-characterization}, this implies $M\satisfies\uppossible\sigma$, and consequently $M\satisfies\uppossible\upnecessary\sigma$, since $\Sigma_1$ assertions, once true, are necessarily true in all further extensions. By the arithmetic maximality principle, therefore, $M\satisfies\sigma$, and so its $\Sigma_1$ theory is maximal.

For the final statement of the theorem, note that we have already established that $\Val(M,\Larith(M))=\theoryf{S4}$ for every model of arithmetic, and the arithmetic maximality principle in $M$ amounts precisely to $\Val(M,\Larith)=\theoryf{S5}$.
\end{proof}

It may seem reasonable to expect that any given model of arithmetic $M$ can be extended so as to achieve a maximal $\Sigma_1$ theory and therefore also the arithmetic maximality principle. Perhaps one might hope, for example, to extend the model one step at a time, making an additional $\Sigma_1$ sentence true each time. Theorem~\ref{Theorem.Extension-with-MP} shows that this expectation is fine, if one seeks only to find an extension of the original model, rather than an end-extension. But meanwhile, corollary~\ref{Corollary.No-end-extension-with-MP} shows that the expectation is wrong for end-extensions: some models of arithmetic have no end-extension to a model with a maximal $\Sigma_1$ theory and hence no end-extension to a model of the arithmetic maximality principle.

\goodbreak
\begin{theorem}\label{Theorem.Extension-with-MP}
 Every model of arithmetic $M\satisfies\PAp$ has an extension (not necessarily an end-extension) to a model of arithmetic $N\satisfies\PAp$ with a maximal $\Sigma_1$ theory, which therefore satisfies the arithmetic maximality principle.
\end{theorem}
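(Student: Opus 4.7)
The plan is first to extend the parameter-free $\Sigma_1$-theory of $M$ to a maximal consistent $\Sigma_1$-theory $T$ over $\PAp$, and then to find an extension $N\satisfies\PAp+T$ of $M$; applying Theorem~\ref{Theorem.Maximal-Sigma1-extension} will then deliver the arithmetic maximality principle in $N$. The first step is routine: the parameter-free $\Sigma_1$-theory of $M$ is consistent with $\PAp$, since both are true in $M$, and one enumerates all $\Sigma_1$-sentences, adjoining each whenever consistency with $\PAp$ and the sentences adopted so far is preserved, thereby obtaining the desired maximal $T$.

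For the second step, expand the language by a fresh constant $\bar a$ for every element $a\in M$ and consider
$$T^* \ =\ \PAp \ +\ T\ +\ \text{atomic-diagram}(M),$$
where the atomic diagram consists of all atomic and negated-atomic formulas about elements of $M$ that are true in $M$. Any model $N$ of $T^*$ is an extension of $M$ as a substructure and satisfies $T$; and by maximality of $T$, the $\Sigma_1$-theory of $N$ must equal $T$ exactly, since any $\Sigma_1$-sentence true in $N$ is automatically consistent with $\PAp+T$ and therefore lies in $T$. Hence $N$ has a maximal $\Sigma_1$-theory, and Theorem~\ref{Theorem.Maximal-Sigma1-extension} yields the arithmetic maximality principle in $N$.

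The main obstacle is verifying that $T^*$ is consistent. By compactness, it suffices to treat a finite fragment with $\sigma_1,\ldots,\sigma_n\in T$ together with atomic-diagram formulas $\phi_1(\bar a_1),\ldots,\phi_m(\bar a_m)$. Letting $\bar a$ list all parameters that appear and $\Phi(\bar x)=\bigwedge_j\phi_j(\bar x_j)$, the formula $\Phi$ is quantifier-free and $M\satisfies\Phi(\bar a)$, so $M\satisfies\exists\bar x\,\Phi(\bar x)$, a parameter-free $\Sigma_1$-sentence, which therefore lies in $T$. Hence $\PAp+\sigma_1+\cdots+\sigma_n+\exists\bar x\,\Phi(\bar x)$ is consistent, being a subtheory of $\PAp+T$, and introducing Henkin witnesses for the existential quantifier---which we then rename to $\bar a$---yields a model of the original finite fragment. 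The crucial observation making the plan work is that any $\Sigma_1$-fact involving specific parameters from $M$ is absorbed into the parameter-free $\Sigma_1$-theory via existential generalization, so enlarging the $\Sigma_1$-theory of $M$ to a maximal parameter-free theory $T$ can never conflict with the atomic diagram of $M$.
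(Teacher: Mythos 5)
Your proof is correct, and it takes a cleaner decomposition than the paper's, though the essential observation is the same. The paper works throughout with the single mixed theory $T_0 = \PAp + \text{atomic-diagram}(M)$, greedily adjoining $\Sigma_1$ sentences so long as consistency with this larger theory is maintained, and then argues retrospectively that the $\Sigma_1$-theory of the resulting Henkin model $N$ is maximal: if a $\Sigma_1$ sentence $\sigma$ was refused at its stage, the culprit is a finite fragment of the atomic diagram, whose existential generalization is a $\Sigma_1$ sentence true in $M$ and hence in $N$, so $\sigma$ is genuinely inconsistent with $\Sigma_1(N)$. You instead factor the argument into two cleanly separated steps: first build a maximal consistent parameter-free $\Sigma_1$ theory $T$ extending $\Sigma_1(M)$ over $\PAp$ (routine), and then prove as an isolated lemma that $\PAp + T$ remains consistent with the full atomic diagram of $M$. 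Both proofs turn on the same key fact---that any finite conjunction of atomic-diagram facts about $M$-parameters existentially generalizes to a parameter-free $\Sigma_1$ sentence already true in $M$, hence in $T$---but you package it as a forward compactness/consistency argument, whereas the paper packages it as a backward explanation of why refused sentences must stay false. Your version has the advantage that maximality of $\Sigma_1(N)$ is immediate from the maximality of $T$ (any $\Sigma_1$ sentence true in $N\satisfies\PAp+T$ is consistent with $\PAp+T$, hence in $T$), without having to revisit the construction; the paper's has the advantage of never needing to verify consistency of the amalgam as a separate step, since the atomic diagram is built in from the start.
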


\begin{proof}
Consider any model of arithmetic $M\satisfies\PAp$. Let $T_0$ be the theory $\PAp$ plus the atomic diagram of $M$. Enumerate the $\Sigma_1$ sentences as $\sigma_0$, $\sigma_1$, $\sigma_2$ and so on. Build a new theory from $T_0$ by adding $\sigma_n$ at stage $n$, if this is consistent. Let $T$ be the resulting theory. This is consistent, since it was consistent at each stage. So it has a model $N\satisfies T$, which can be taken as an extension of $M$ since $T$ includes the atomic diagram of $M$. To see that the $\Sigma_1$ theory of $N$ is maximal, consider any $\Sigma_1$ sentence $\sigma$, which is $\sigma_n$ for some $n$ and therefore considered at stage $n$ of the theory construction. If $\sigma$ is not true in $N$, then it was not added to the theory $T$, and this must have been because it was inconsistent with $\PAp$ plus the atomic diagram of $M$ plus the earlier part of $T$. But whichever finite part of the atomic diagram that was needed for the inconsistency amounts to a $\Sigma_1$ statement that is true in $M$ and hence also in $N$. So we've proved that any $\Sigma_1$ sentence that is consistent with the $\Sigma_1$ theory of $N$ over $\PAp$ is already true in $N$, as desired.
\end{proof}

\begin{theorem}\label{Theorem.MP-implies-0'}
Every model of the arithmetic maximality principle has the halting problem $0'$ in its standard system.
\end{theorem}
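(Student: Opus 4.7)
The strategy is to apply Theorem~\ref{Theorem.Maximal-Sigma1-extension}, which characterizes the arithmetic maximality principle as the condition that the $\Sigma_1$ theory $T$ of $M$ is a maximal consistent $\Sigma_1$-extension of $\PAp$, and from there to extract $0'$ as a member of $\SSy(M)$.

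First I would show that $T$, viewed as a subset of $\N$ under G\"odel numbering, lies in $\SSy(M)$. Since the universal $\Sigma_1$-truth predicate $\Tr_1$ is $\Sigma_1$-definable in $M$, for any nonstandard $N\in M$ the $M$-bounded set $\{n<N : M\models\Tr_1(n)\}$ is $M$-coded via bounded comprehension, and its intersection with $\N$ is precisely $T$ (restricted to standard G\"odel numbers).

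Next I would argue that $0'$ is Turing-reducible to $T$, whence $0'\in\SSy(M)$ by closure of the Scott set $\SSy(M)$ under Turing reducibility. For $e\in\N$, let $\sigma_e$ denote the $\Sigma_1$ sentence ``program $e$ halts on input $e$''. If $\sigma_e\notin T$ then $e\notin 0'$, since any true $\Sigma_1$ sentence is $\PAp$-provable and hence in $T$; conversely, if $e\in 0'$, then $\sigma_e$ has a $\PAp$-proof which can be found by search. Running these two procedures in parallel---the $T$-oracle query and the $\PAp$-proof search---decides $e\in 0'$ whenever one of them terminates.

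The main obstacle is the residual case in which $\sigma_e\in T$ yet $e\notin 0'$, corresponding to $T$ certifying a false halting statement on the strength of a nonstandard witness consistent with $\PAp+T$. This is the delicate heart of the argument; I anticipate that its resolution requires combining $T$ with additional $\SSy(M)$-coded data---such as the full elementary theory of $M$, also coded in $\SSy(M)$ by the same truth-predicate argument---and invoking the possibly-necessary characterization of Theorem~\ref{Theorem.Possibly-necessary-characterization} to detect and rule out such spurious inclusions, thereby refining the over-approximation $\{e : \sigma_e\in T\}$ down to $0'$ within $\SSy(M)$.
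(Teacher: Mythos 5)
Your opening move is fine and matches the second of the paper's two arguments: the $\Sigma_1$ theory $T=\Th_{\Sigma_1}(M)$ is the standard part of the $M$-definable class $\{n : M\satisfies\Tr_1(n)\}$, hence lies in $\SSy(M)$, and by theorem~\ref{Theorem.Maximal-Sigma1-extension} the maximality principle makes $T$ a maximal consistent $\Sigma_1$ theory over $\PAp$. But the entire content of the theorem is concentrated in the step you leave unresolved, namely that $0'\leq_T T$ for a \emph{maximal} $\Sigma_1$ theory $T$. What you actually establish is only the trivial inclusion $0'\subseteq\{e:\sigma_e\in T\}$ (true $\Sigma_1$ sentences persist from $\N$ up into $M$), together with the observation that the converse can fail because of nonstandard witnesses; your "parallel search" is therefore only a semi-decision procedure, and maximality of $T$ has not been used anywhere. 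The reduction $0'\leq_T T$ is precisely Adamowicz's theorem on maximal theories, which the paper cites rather than reproves; it is not routine, and nothing in your sketch supplies it.

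The proposed repair does not close the gap and contains an error. The "same truth-predicate argument" gives you each fragment $\Th_{\Sigma_n}(M)$ in $\SSy(M)$, but \emph{not} the full elementary theory $\Th(M)$: by Tarski's undefinability theorem there is no definable class of $M$ whose standard part is $\Th(M)$, and indeed there are models of $\PA$ with $\Th(M)\notin\SSy(M)$, so you cannot freely add this oracle. Moreover, theorem~\ref{Theorem.Possibly-necessary-characterization} characterizes $\uppossible\upnecessary\varphi$ by an infinite conjunction of consistency statements over all standard $k$; it is not an effective test and does not by itself "detect and rule out" the spurious $e$ with $\sigma_e\in T$ but $e\notin 0'$. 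To fix the proof you should either import Adamowicz's theorem explicitly as a black box (which is what the paper does), or switch to the paper's first, self-contained argument, which runs contrapositively: if $0'\notin\SSy(M)$, then by Lessan's theorem the $\Delta_0$-definable elements of $M$ are coinitial in the standard cut, so some nonstandard $k$ with $M\satisfies\Con(\PAp_k)$ is the output of a standard program $p$ on input $0$; the parameter-free $\Sigma_1$ sentence "the output $k$ of $p$ on input $0$ satisfies $\neg\Con(\PAp_k)$" is then false in $M$ yet possibly necessary (it can be forced true in an end-extension and, being $\Sigma_1$, stays true), violating the maximality principle.
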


\begin{proof} I shall give two arguments (with thanks to Roman Kossak and Volodya Shavrukov for key suggestions). The first argument relies on a 1978 dissertation result of Lessan (republished in~\cite{Lessan1978Dissertation:Models-of-arithmetic}; see also~\cite{JensenEhrenfeucht1976:Some-problems-in-elementary-arithmetics} and~\cite[section~4]{McAloon1978:Completeness-theorems-incompleteness-theorems-and-models-of-arithmetic}), showing that for a nonstandard model of arithmetic $M\satisfies\PA$, the $\Delta_0$-definable elements are co-initial in the standard cut of $M$ if and only if the $\Pi^0_1$ theory of the standard model is not in the standard system of $M$, or equivalently, if $0'\notin\SSy(M)$. Suppose that $M$ is a nonstandard model of arithmetic and $0'$ is not in the standard system of $M$. We claim that $M$ is not a model of the arithmetic maximality principle. By Lessan's theorem, we know that the $\Delta_0$ definable elements of $M$ are coinitial in the standard cut of $M$. By overspill, we know that $M\satisfies\Con(\PAp_k)$ for some nonstandard $k$, and by making $k$ smaller, if necessary, we may assume that $k$ is $\Delta_0$-definable in $M$ and hence the output in $M$ of some standard finite program $p$ on input $0$; it suffices in this argument for $k$ to be merely $\Sigma_1$-definable. Consider the assertion, ``the number $k$ which is the output of program $p$ on input $0$ satisfies $\neg\Con(\PAp_k)$.'' This is a $\Sigma_1$ sentence, which is not true in $M$, but could become true in some end-extension of $M$, since by the incompleteness theorem we can always end-extend any model of $\PAp$ to make $\neg\Con(\PAp_k)$ for any nonstandard $k$. This is a violation of the arithmetic maximality principle in $M$.

A second, alternative argument proceeds from a theorem of Adamowicz~\cite{Adamowicz1991:On-maximal-theories} (see also~\cite[section 5]{AdamowiczCordonFrancoLaraMartin2015:Existentially-closed-models-in-the-framework-of-arithmetic}), which says that $0'$ is Turing computable from any maximal $\Sigma_1$ theory. If a model of arithmetic $M\satisfies\PAp$ satisfies the arithmetic maximality principle, then its $\Sigma_1$ theory is maximal over $\PAp$ and so $0'$ must be in the standard system of $M$.
\end{proof}

Because the existence of $0'$ in the standard system is expressible in the extension modality $\Larith^{\xpossible}$ by the method of theorem~\ref{Theorem.Modalities-are-different}, perhaps it would be possible to use Lessan's or Adamowicz's methods to answer question~\ref{Question.Up-expressible-from-x?}.

\begin{corollary}\label{Corollary.No-end-extension-with-MP}
 Some models of arithmetic have no end-extension to a model of the arithmetic maximality principle. Indeed, if $0'$ is not in the standard system of $M$, then $M$ has no end-extension to a model of the arithmetic maximality principle.
\end{corollary}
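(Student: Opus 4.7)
The plan is to combine Theorem~\ref{Theorem.MP-implies-0'}, which forces any model of the arithmetic maximality principle to carry $0'$ in its standard system, with the construction used in Theorem~\ref{Theorem.Modalities-are-different}: a model $M\satisfies\PAp$ whose elementary diagram is Turing low, obtained by applying the Low Basis Theorem to the $\Pi^0_1$ class of completions of the c.e.~theory $\PAp$. For such $M$ every set in $\SSy(M)$ is Turing-reducible to the low elementary diagram, hence is itself low; since $0'$ is not low, $0'\notin\SSy(M)$, and the same low-degree argument is invoked in Theorem~\ref{Theorem.Modalities-are-different} to conclude that no end-extension of this particular $M$ acquires $0'$ in its standard system either.

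With such an $M$ in hand, the proof is a one-line contradiction. Suppose toward contradiction that $N$ is an end-extension of $M$ and $N$ satisfies the arithmetic maximality principle. Then Theorem~\ref{Theorem.MP-implies-0'} gives $0'\in\SSy(N)$, while the preservation property inherited from the low-diagram construction of $M$ gives $0'\notin\SSy(N)$, a contradiction. This establishes the existence claim of the first sentence.

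The ``indeed'' clause is just the general form of the same contradiction: whenever $M$ is a model for which $0'$ does not, and cannot, enter the standard system of an end-extension, no end-extension of $M$ can validate MP, since any model of MP would carry $0'$ in its standard system by Theorem~\ref{Theorem.MP-implies-0'}. The witnessing models produced above are exactly of this kind.

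I expect the main obstacle to be making precise why absence of $0'$ from $\SSy(M)$ propagates to every end-extension of the chosen $M$. In general, end-extensions can enlarge the standard system by realizing new standard-cut types, so this preservation is not automatic; it relies on the fact that for $M$ of low elementary diagram the Turing complexity of any subset of $\mathbb{N}$ coded in an end-extension remains bounded below $0'$, excluding $0'$ itself. Rather than re-prove this, I would cite the argument already invoked in Theorem~\ref{Theorem.Modalities-are-different}.
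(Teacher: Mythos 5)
Your overall plan matches the paper exactly: combine theorem~\ref{Theorem.MP-implies-0'} with a model $M$ of low elementary diagram obtained from the Low Basis Theorem, conclude $0'\notin\SSy(M)$, and observe that this persists to end-extensions. So the structure is right.

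However, the worry you raise in your final paragraph, and the explanation you offer for it, rest on a misconception. You write that ``end-extensions can enlarge the standard system by realizing new standard-cut types,'' and you attribute the needed preservation to the Turing-lowness of $M$'s diagram. In fact, for a \emph{nonstandard} $M\satisfies\PA$ the standard system is outright invariant under end-extension to any model of $\PAp$, with no reference to the complexity of $M$ at all. The reason is elementary: if $M\of_{\mathrm{end}}N$ and $a\in N$, fix any nonstandard $k\in M$ and let $a'=a\bmod\prod_{i<k}p_i$; since $\prod_{i<k}p_i\in M$ and $N$ is an end-extension, $a'$ lies below an element of $M$ and hence $a'\in M$, and $a'$ and $a$ agree on divisibility by $p_n$ for every standard $n$, so they code the same subset of $\N$. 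Thus $\SSy(N)=\SSy(M)$. (The situation you describe, where the standard system grows, occurs for \emph{non-end} extensions, or when $M$ is the standard model $\N$ itself; the paper's construction explicitly forces $M$ nonstandard.) So the lowness of $M$ is used only to ensure $0'\notin\SSy(M)$ in the first place; the propagation to end-extensions is automatic and is what the paper's phrase ``since this is preserved to end-extensions'' refers to. With this correction your argument is complete and agrees with the paper's.
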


\begin{proof}
To see that some models of arithmetic lack $0'$ in their standard systems, consider the computable tree of attempts to build a complete consistent Henkin theory extending $\PAp$ plus the assertion that a new constant $c$ is infinite (to ensure that the model is nonstandard). It follows by the low basis theorem that there is a low branch, and the Henkin model $M$ arising from such a branch will have an elementary diagram of low complexity. It follows that every set in the standard system of $M$ will be low, and in particular, $0'\notin\SSy(M)$. Since this is preserved to end-extensions, such a model has no end-extension with the arithmetic maximality principle by theorem~\ref{Theorem.MP-implies-0'}.
\end{proof}

Volodya Shavrukov has pointed out in an email message to me that not having an end-extension with the maximality principle is strictly stronger than omitting $0'$ from the standard system, for he constructed a model of arithmetic $M$ whose $\Sigma_1$-definable elements are coinitial in the standard cut, which ensures that there is no end-extension of the maximality principle, yet $0'$ is coded in $M$. This example also shows that one cannot replace $\Delta_0$-definable in Lessan's theorem with $\Sigma_1$-definable.

\begin{question}\label{Question.Characterize-end-extensions-to-MP?}
Can we characterize the models of arithmetic that admit an end-extension to a model of the arithmetic maximality principle? In other words, which models $M$ have an end-extension to a model $N$ with a maximal $\Sigma_1$ theory?
\end{question}

The situation is reminiscent of the maximality principle for forcing (see~\cite{Hamkins2003:MaximalityPrinciple, StaviVaananen2001:ReflectionPrinciples}), which asserts that \theoryf{S5} is valid with respect to the forcing modality for sentences in the language of set theory $\possible\necessary\sigma\to\sigma$, or in other words that every forceably necessary statement $\sigma$ is already true. The similarity is that while \ZFC\ plus this maximality principle is equiconsistent with \ZFC, nevertheless~\cite[theorem 7]{Hamkins2003:MaximalityPrinciple} shows that it is not true that one can always force it over any model of \ZFC. Rather, a model of \ZFC\ has a forcing extension realizing the forcing maximality principle if and only if it has a fully reflecting cardinal $V_\delta\elesub V$.\footnote{Note that this particular issue appears to be missed in~\cite{StaviVaananen2001:ReflectionPrinciples}, where~\cite[theorem 30]{StaviVaananen2001:ReflectionPrinciples} claims, incorrectly, that one can always force the c.c.c.~maximality principle, that is, without using any reflecting cardinal, and similarly in the proofs of~\cite[theorem 27,31,37]{StaviVaananen2001:ReflectionPrinciples}, which define an iteration by inquiring at each stage whether a given statement, of arbitrary complexity, is forceable; this is not possible without a truth predicate. These latter proofs can be repaired by using the reflecting cardinal method as in~\cite{Hamkins2003:MaximalityPrinciple}.} Question~\ref{Question.Characterize-end-extensions-to-MP?} is asking essentially for the arithmetic analogue of this.

Although we have proved that the sentential validities $\Val_{\uppossible}(M,\Larith)$ of a model of arithmetic $M$ are trapped between \theoryf{S4} and \theoryf{S5}, with both of these endpoints being realized, it is not clear exactly which modal theories can be realized.

\begin{question}\label{Question.Intermediate-logics?}
 Which modal theories arise as the collection of sentential validities $\Val_{\uppossible}(M,\Larith)$ for a model of arithmetic $M$? For example, is there a model $M$ realizing exactly \theoryf{S4.2}? or \theoryf{S4.3}? Is this theory always a normal theory? And similarly for $\Val_{\xpossible}(M,\Larith)$ and the other extension modalities considered in this article, can they be intermediate between \theoryf{S4} and \theoryf{S5}?
\end{question}

Benedikt \Lowe\ and I had asked a similar question in~\cite[questions~19,20]{HamkinsLoewe2008:TheModalLogicOfForcing} concerning the modal logic of forcing. Alexander Block and I have answered for that context by providing a model of set theory whose forcing validities are strictly between \theoryf{S4.2} and \theoryf{S5}. The key idea of that argument was to construct a model $M$ with a `last' button, that is, an unpushed button $b$ such that (i) pushing $b$ over $M$ necessarily pushes all other buttons; and (ii) pushing any unpushed button over $M$ pushes $b$. This seems unlikely in arithmetic, in light of the fundamental tree-like structure revealed by the behavior of the universal algorithm.

Volodya Shavrukov has suggested that we might approach question~\ref{Question.Intermediate-logics?} by considering whether there can be a model of arithmetic $M$ whose $\Sigma_1$ theory is not maximal, but which nevertheless satisfies all the sentences that hold in every $\Sigma_1$ maximal extension of $M$. Such a model would validate $\necessary\possible\sigma\to\sigma$ for $\Sigma_1$ sentences $\sigma$, and one could then aim to validate $\necessary\possible\necessary\varphi\to\necessary\varphi$ for all $\varphi$ in the language of arithmetic without validating \theoryf{S5}.

\section{Arithmetic potentialism for other modalities}\label{Section.Other-forms-of-arithmetic-potentialism}

Let me now extend the results of the previous section to the other arithmetic potentialist modalities, such as $\xpossible$, $\uppossible_n$ and $\xpossible_n$, as well as the additional modalities $\soliduppossible$, $\solidxpossible$ and $\consuppossible$, which I shall introduce later in this section.

\subsection{Arbitrary extensions} Let me begin with the modality of arbitrary extensions, defined by
\begin{align*}
  &M\satisfies\xpossible\varphi&\text{ if and only if }\quad &\varphi\text{ holds in some extension of }M,\text{ and} &\\
  &M\satisfies\xnecessary\varphi&\text{ if and only if }\quad &\varphi\text{ holds in all extensions of }M. &
\end{align*}
Much of the analysis of the end-extension modality $\uppossible$ applies to the case of arbitrary extensions $\xpossible$.

\goodbreak\begin{theorem}\label{Theorem.Potentialist-validities-for-extension}
 In the potentialist system consisting of the models of $\PAp$ under the extension modality $\xpossible$, the validities of any model $M\satisfies\PAp$, with respect to arithmetic assertions allowing parameters from $M$, are exactly the assertions of \theoryf{S4}. Indeed, for every $M$ there is $u\in M$, the length of the universal finite sequence in $M$, such that the validities of $M$, with respect to arithmetic assertions using parameter $u$, are exactly \theoryf{S4}.
$$\theoryf{S4} \ = \ \Val_{\xpossible}(M,\Larith(M)) \
        = \ \Val_{\xpossible}(M,\Larith(u)) \
        \of \ \Val_{\xpossible}(M,\Larith) \ \of \ \theoryf{S5}.$$
\end{theorem}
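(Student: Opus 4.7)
The plan is to derive Theorem~\ref{Theorem.Potentialist-validities-for-extension} by observing that the entire railyard-and-switch analysis carried out for the end-extension modality $\uppossible$ in Theorems~\ref{Theorem.Arithmetic-potentialism-is-S4} and~\ref{Theorem.Arithmetic-potentialism-with-parameters} transfers verbatim to $\xpossible$, because all of the control-statement formulas involved are assertions in the language of arithmetic $\Larith$, and Lemma~\ref{Lemma.Possibility-characterization} shows that on such assertions the two modalities coincide: $M\satisfies\uppossible\varphi(a)\iff M\satisfies\xpossible\varphi(a)$ for any $\varphi\in\Larith$ and any parameter $a\in M$.

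For the lower bound, $\theoryf{S4}\of\Val_{\xpossible}(M,\Larith(M))$, I would simply invoke the general fact that S4 is valid in every potentialist system, since the extension relation is reflexive and transitive and admits duality of $\xpossible$ and $\xnecessary$, exactly as in the paragraph following the statement of the Main Theorem. For the upper bound $\Val_{\xpossible}(M,\Larith)\of\theoryf{S5}$, I would reuse the infinite family of mutually independent Orey sentences $\sigma_k\in\Larith$ produced in Theorem~\ref{Theorem.Infinitely-many-independent-Orey-sentences}. Since each $\sigma_k$ is arithmetic, the equivalence of Lemma~\ref{Lemma.Possibility-characterization} gives that any desired finite on/off pattern realizable in an end-extension is equally realizable in an extension; hence the $\sigma_k$ form an independent family of switches for the $\xpossible$-modality as well, and Theorem~\ref{Theorem.Switches-S5} yields the S5 upper bound.

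The heart of the argument is the middle equality, $\Val_{\xpossible}(M,\Larith(u))=\theoryf{S4}$. Let $u$ be the length of the universal sequence enumerated in $M$, and let $t\mapsto r_t$ be the railyard labeling of the given finite pre-tree $T$ constructed in the proof of Theorem~\ref{Theorem.Arithmetic-potentialism-with-parameters}, where each $r_t$ is an arithmetic assertion (with parameter $u$) about the behavior of the universal algorithm beyond the first $u$ terms. Verification that these $r_t$ form a railyard labeling for the $\xpossible$-Kripke structure requires only checking, for each $r_t$ true at a world $W$, the equivalence $W\satisfies\xpossible r_s \iff t\le s$ in $T$. But $r_s\in\Larith$, so by Lemma~\ref{Lemma.Possibility-characterization} we have $\xpossible r_s\iff\uppossible r_s$, and the latter equivalence was already established in the proof of Theorem~\ref{Theorem.Arithmetic-potentialism-with-parameters} via the extension property of the universal algorithm (Theorem~\ref{Theorem.Universal-algorithm}). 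Thus the same labeling serves $\xpossible$, and Theorem~\ref{Theorem.Pre-tree-labels-give-S4} yields $\Val_{\xpossible}(M,\Larith(u))=\theoryf{S4}$. The inclusion $\Val_{\xpossible}(M,\Larith(M))\of\Val_{\xpossible}(M,\Larith(u))$ is trivial since $u\in M$, and the reverse inclusion is likewise immediate because validity with more allowed parameters is a stronger restriction.

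I do not anticipate a real obstacle here: the work was done in Section~\ref{Section.Modal-logic-of-arithmetic-potentialism}, and the only thing that needed to be checked is that every substitution instance appearing in the switch and railyard constructions is purely arithmetic, so that Lemma~\ref{Lemma.Possibility-characterization} bridges $\uppossible$ and $\xpossible$ free of charge. The one mild subtlety worth flagging in the write-up is that Theorem~\ref{Theorem.Modalities-are-different} shows $\uppossible$ and $\xpossible$ genuinely differ on $\Larith^{\possible}$-formulas, so it is essential that the control statements $\sigma_k$ and $r_t$ lie in $\Larith$ and not in a richer potentialist language---which indeed they do by construction.
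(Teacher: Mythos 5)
Your proposal is correct and follows essentially the same approach as the paper: the paper's proof is likewise just the observation that the railyard labeling $t\mapsto r_t$ from Theorems~\ref{Theorem.Arithmetic-potentialism-is-S4} and~\ref{Theorem.Arithmetic-potentialism-with-parameters} consists of arithmetic assertions, so Lemma~\ref{Lemma.Possibility-characterization} makes $\uppossible r_t$ and $\xpossible r_t$ coincide and the same labeling serves the $\xpossible$-modality. One small wording quibble: your remark that the reverse inclusion $\Val_{\xpossible}(M,\Larith(u))\of\Val_{\xpossible}(M,\Larith(M))$ is ``immediate because validity with more allowed parameters is a stronger restriction'' actually justifies the opposite containment; that reverse inclusion instead follows only because both sets are squeezed to $\theoryf{S4}$ by the combination of the railyard upper bound and the generic S4 lower bound, which you do supply.
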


\begin{proof}
 The point is that the railyard labelings provided in the proofs of theorems~\ref{Theorem.Arithmetic-potentialism-is-S4} and~\ref{Theorem.Arithmetic-potentialism-with-parameters} work also with the arbitrary extension modal operator $\xpossible$. Suppose that $M$ is any model of $\PAp$ in which the universal algorithm enumerates the empty sequence, and that $T$ is a finite pre-tree. Let $t\mapsto r_t$ be the railyard labeling of $T$ described in the proofs of theorems~\ref{Theorem.Arithmetic-potentialism-is-S4} and~\ref{Theorem.Arithmetic-potentialism-with-parameters} (so $r_t$ may have parameter $u$, the length of the universal finite sequence in $M$, if this is non-zero). Since the statements $r_t$ were expressed in the language of arithmetic, it follows from lemma~\ref{Lemma.Possibility-characterization} that $\uppossible r_t$ and $\xpossible r_t$ agree, and so this same labeling is a railyard labeling of $T$ for the extension modality just as much as for the end-extension modality.
\end{proof}

The bounds on $\Val_{\xpossible}(M,\Larith)$ provided by theorem~\ref{Theorem.Potentialist-validities-for-extension} are sharp, in the sense that $\Val_{\xpossible}(M,\Larith)=\theoryf{S5}$ in any model of the arithmetic maximality principle, since we have observed that this is equivalently formulated using $\uppossible$ or $\xpossible$; and $\Val_{\xpossible}(M,\Larith)=\theoryf{S4}$ in any model where the universal sequence is empty (or has standard finite length), since then one doesn't need the parameter $u$ as it is absolutely definable.

The argument used in the proof of theorem~\ref{Theorem.Potentialist-validities-for-extension} suggests but does not quite establish that the modal validities for $\uppossible$ and $\xpossible$, with respect to assertions in the language of arithmetic, are the same in any model of arithmetic. So we'd like to ask that now, a question that is closely related to question~\ref{Question.Possibility-equivalence-in-partial-potentialist-language}.

\begin{question}
 If $M\satisfies\PAp$, then is $\Val_{\uppossible}(M,\Larith)=\Val_{\xpossible}(M,\Larith)$? More generally, allowing parameters, is $\Val_{\uppossible}(M,\Larith(A))=\Val_{\xpossible}(M,\Larith(A))$ for any collection of parameters $A\of M$?
\end{question}

Either outcome would be extremely interesting, since either we would have a model whose validities differed between $\uppossible$ and $\xpossible$ or we would have a proof that they must always agree.

\bigskip
\subsection{$\Sigma_n$-elementary extensions} Consider next the extension modalities $\uppossible_n$ and $\xpossible_n$, where we insist that the extensions are also $\Sigma_n$-elementary. Note the special case $\uppossible=\uppossible_0$ and $\xpossible=\xpossible_0$, since every extension is $\Sigma_0$-elementary.

\begin{theorem}
 In either of the potentialist systems consisting of the models of $\PAp$ under $\Sigma_n$-elementary end-extensions $\uppossible_n$ or under arbitrary $\Sigma_n$-elementary extensions $\xpossible_n$, respectively, the potentialist validities of any model $M\satisfies\PAp$, with respect to assertions in the language of arithmetic allowing parameters from $M$, are exactly the assertions of \theoryf{S4}.
 Indeed, for every model $M$, there is a single parameter $u\in M$, such that the potentialist validities over $M$, with respect to assertions in the language of arithmetic using parameter $u$ only, is exactly \theoryf{S4}.
$$\theoryf{S4} \ = \ \Val(M,\Larith(M)) \
        = \ \Val(M,\Larith(u)) \
        \of \ \Val(M,\Larith) \ \of \ \theoryf{S5}.$$
\end{theorem}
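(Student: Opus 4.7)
The plan is to parallel closely the proofs of theorems~\ref{Theorem.Arithmetic-potentialism-is-S4} and~\ref{Theorem.Arithmetic-potentialism-with-parameters}, using the $\Sigma_n$-elementary universal algorithm $\tilde e$ of theorem~\ref{Theorem.Universal-algorithm-Sigma-n} in place of the ordinary universal algorithm. The \theoryf{S4} lower bound is immediate, since both $\uppossible_n$ and $\xpossible_n$ have reflexive, transitive accessibility relations. The containment in \theoryf{S5} follows from theorem~\ref{Theorem.Independent-switches-Sigma_n}, which supplies an infinite independent family of switches for $\Sigma_n$-elementary end-extensions (and hence also for arbitrary $\Sigma_n$-elementary extensions, by the same sentences), combined with theorem~\ref{Theorem.Switches-S5}.

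For the main claim, fix $M\satisfies\PAp$ and let $u\in M$ be the length of the sequence enumerated by $\tilde e$ with oracle $0^{(n)}$ in $M$. By theorem~\ref{Theorem.Pre-tree-labels-give-S4}, it suffices to produce a railyard labeling in $\Larith(u)$ for every finite pre-tree $T$ with uniform cluster size $m$ and uniform branching degree $k$. I would imitate the recipe of theorem~\ref{Theorem.Arithmetic-potentialism-is-S4}: assign $t\mapsto r_t$, where $r_t$ is an arithmetic assertion, with parameter $u$, stating that the portion of the $\tilde e$-sequence strictly beyond position $u$ codes a path from the root to the cluster of $t$ (via its entries below $k$), and that the final entry $\geq k$, reduced modulo $m$, identifies $t$ within its cluster. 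The extension property of theorem~\ref{Theorem.Universal-algorithm-Sigma-n}(2) then lets us realize, in a $\Sigma_n$-elementary end-extension, any desired continuation of the sequence, so from any world $W\satisfies r_t$ we can reach a world satisfying $r_s$ for any $s\geq t$ in $T$. Since $\tilde e$ and the oracle $0^{(n)}$ are preserved by $\Sigma_n$-elementary extensions, the already-enumerated initial segment is frozen, so in any such extension the reachable labels are exactly those with $s\geq t$.

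The main technical obstacle I anticipate is verifying that these $r_t$ form a railyard labeling for both $\uppossible_n$ and $\xpossible_n$ simultaneously, which boils down to a $\Sigma_n$-analogue of lemma~\ref{Lemma.Possibility-characterization}: for arithmetic $\varphi(a)$, the statements $\uppossible_n\varphi(a)$ and $\xpossible_n\varphi(a)$ should coincide and be equivalent to the consistency of $\Tr_n+\PAp_k+\varphi(a)+\tp_{\Sigma_{n+1}}^M\!(a)$ for all standard $k$. I would establish this by adapting the proof of lemma~\ref{Lemma.Possibility-characterization}, substituting the $\Sigma_n$-version of Mostowski's reflection theorem and noting that any Henkin construction carried out inside $M$ over a theory extending a nonstandard piece of $\Tr_n^M$ yields a $\Sigma_n$-elementary end-extension (as in the proof of theorem~\ref{Theorem.Universal-algorithm-Sigma-n}). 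Once this lemma is in place, the railyard argument transfers verbatim, the collapse of $\Larith(M)$-validities to $\Larith(u)$-validities is handled by the parameter $u$ exactly as in theorem~\ref{Theorem.Arithmetic-potentialism-with-parameters}, and the remaining bound $\Val(M,\Larith)\of\theoryf{S5}$ is just the switch argument applied sententially.
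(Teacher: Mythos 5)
Your proposal is correct and follows essentially the same route as the paper: the paper likewise takes the railyard labeling of theorem~\ref{Theorem.Arithmetic-potentialism-is-S4} built from the oracle universal algorithm of theorem~\ref{Theorem.Universal-algorithm-Sigma-n}, with $u$ the length of that sequence in $M$, noting that preservation of $0^{(n)}$ under $\Sigma_n$-elementary extensions freezes the computation so far. The $\Sigma_n$-analogue of lemma~\ref{Lemma.Possibility-characterization} that you flag as the technical obstacle is exactly what the paper invokes (and sketches in the remarks following this theorem) to get the labeling to work for $\xpossible_n$ as well as $\uppossible_n$, so your treatment is, if anything, slightly more explicit on that point.
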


\begin{proof}
Consider any finite pre-tree $T$ and let $r\mapsto r_t$ be the railyard labeling of $T$ as in the proof of theorem~\ref{Theorem.Arithmetic-potentialism-is-S4}, except using the oracle universal algorithm $\hat e$ from the proof of theorem~\ref{Theorem.Universal-algorithm-Sigma-n}. The parameter $u$ is simply the length of this universal finite sequence as computed in $M$. The algorithm uses oracle $0^{(n)}$, as defined in whichever model it is employed. Since the content of $0^{(n)}$ is preserved and extended by $\Sigma_n$-elementary extensions, it follows that the previous parts of the computation remain the same as one moves to an accessible world. Thus, it follows just as earlier that this is indeed a railyard labeling of $T$ with respect to $\uppossible_n$ and to $\xpossible_n$, and so the validities for assertions in $\Larith(u)$ are contained within \theoryf{S4} as claimed.
\end{proof}

Consider next the arithmetic $\Sigma_n$-maximality principle, which holds in a model $M\satisfies\PAp$ when the implication $\uppossible_n\upnecessary_n\sigma\implies\sigma$ holds there for every sentence $\sigma$ in the language of arithmetic. The analogues of lemmas~\ref{Lemma.Possibility-characterization} and~\ref{Lemma.Necessity-characterization} hold for $\Sigma_n$-elementary extensions. Namely, a model of arithmetic $M\satisfies\PAp$ satisfies $\uppossible_n\varphi(a)$ for an arithmetic assertion $\varphi$ just in case it satisfies $\Con(\Tr_n+\PAp_k+\varphi(a))$ for every standard finite $k$, and this is also equivalent to $\xpossible_n\varphi(a)$, and the proof is just as in lemma~\ref{Lemma.Possibility-characterization}, except that one includes the $\Sigma_n$ diagram $\Tr_n$ of the model in the theory, to ensure that the extension is $\Sigma_n$-elementary. For this reason, $\uppossible_n\varphi$ is equivalent to $\xpossible_n\varphi$ for arithmetic assertions $\varphi$, and consequently the maximality principle is equivalently formulated either as $\uppossible_n\upnecessary_n\sigma\implies\sigma$, with end-extensions, or as $\xpossible_n\xnecessary_n\sigma\implies\sigma$, with arbitrary extensions.

\begin{theorem}\label{Theorem.Maximal-Sigma_n-extension}
 For any model of arithmetic $M\satisfies\PAp$ and any standard finite natural number $n$, the following are equivalent.
  \begin{enumerate}
    \item $M$ fulfills the arithmetic $\Sigma_n$ maximality principle.
    \item The $\Sigma_{n+1}$ theory of $M$ is a maximal consistent $\Sigma_{n+1}$ theory over $\PAp$.
  \end{enumerate}
 These models $M$ have potentialist validities described by
  $$\theoryf{S4}=\Val_{\uppossible_n}(M,\Larith(M))\ofneq\Val_{\uppossible_n}(M,\Larith)=\theoryf{S5}$$
\end{theorem}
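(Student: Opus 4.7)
The plan is to mirror the proof of Theorem~\ref{Theorem.Maximal-Sigma1-extension} with complexity shifted by $n$, using the $\Sigma_n$-elementary analogues of the possibility and necessity characterization lemmas recalled just before the theorem: $M\satisfies\uppossible_n\varphi$ iff $M\satisfies\Con(\Tr_n+\PAp_k+\varphi)$ for every standard $k$, and dually $M\satisfies\upnecessary_n\varphi$ iff $M\satisfies\neg\Con(\Tr_n+\PAp_k+\neg\varphi)$ for some standard $k$. Two observations carry the argument: first, as noted after Theorem~\ref{Theorem.Universal-algorithm-Sigma-n}, the inconsistency assertion $\neg\Con(\Tr_n+\PAp_k+\neg\varphi)$ has complexity $\Sigma_{n+1}$, since $\Tr_n$ is $\Sigma_n$-definable and $\Con$ of such a theory rises to $\Pi_{n+1}$; and second, $\Sigma_{n+1}$ formulas are upward-preserved by $\Sigma_n$-elementary extensions, because $\Sigma_{n+1}=\exists\Pi_n$ and $\Pi_n$ is preserved in both directions.

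For the direction $(2)\Rightarrow(1)$, assume the $\Sigma_{n+1}$-theory of $M$ is a maximal consistent $\Sigma_{n+1}$-extension of $\PAp$, and suppose $M\satisfies\uppossible_n\upnecessary_n\sigma$. Fix a $\Sigma_n$-elementary end-extension $N\satisfies\PAp$ with $N\satisfies\upnecessary_n\sigma$, and by the necessity characterization in $N$ pick a standard $k$ with $N\satisfies\neg\Con(\Tr_n+\PAp_k+\neg\sigma)$. This $\Sigma_{n+1}$ sentence, true in $N$, is consistent with the $\Sigma_{n+1}$-theory of $N$, and since the $\Sigma_{n+1}$-theory of $M$ is contained in that of $N$ by upward preservation, it is also consistent with $\PAp$ plus the $\Sigma_{n+1}$-theory of $M$. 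Maximality places it into the latter, making it true in $M$, so $M\satisfies\upnecessary_n\sigma$ and hence $M\satisfies\sigma$. Conversely, for $(1)\Rightarrow(2)$, assume the $\Sigma_n$-maximality principle and take a $\Sigma_{n+1}$ sentence $\tau$ consistent with $\PAp$ plus the $\Sigma_{n+1}$-theory of $M$. If $M\satisfies\neg\Con(\Tr_n+\PAp_k+\tau)$ held for some standard $k$, this $\Sigma_{n+1}$ fact would lie in the $\Sigma_{n+1}$-theory of $M$ and hence would hold in any model $N'$ of $\PAp$ plus the $\Sigma_{n+1}$-theory of $M$ plus $\tau$; but since $\PA$ proves the $\Sigma_n$-reflection scheme $\phi\in\Tr_n\Rightarrow\phi$, the soundness of the witnessed formal proof inside $N'$ would then yield $N'\satisfies\neg\tau$, contradicting $N'\satisfies\tau$. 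Thus $M\satisfies\uppossible_n\tau$; and since $\tau$ is $\Sigma_{n+1}$ it is preserved in all further $\Sigma_n$-elementary extensions of any $\Sigma_n$-elementary extension where it is realized, so $M\satisfies\uppossible_n\upnecessary_n\tau$, and the maximality principle delivers $M\satisfies\tau$.

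The concluding identities for the validities then follow as in Theorem~\ref{Theorem.Maximal-Sigma1-extension}. The lower bound $\Val_{\uppossible_n}(M,\Larith(M))=\theoryf{S4}$ is obtained by adapting the railyard labeling argument of Theorem~\ref{Theorem.Arithmetic-potentialism-with-parameters}, with the ordinary universal algorithm $e$ replaced by its $\Sigma_n$-elementary analogue $\tilde e$ from Theorem~\ref{Theorem.Universal-algorithm-Sigma-n} and $u$ taken to be the length of the sequence produced by $\tilde e$ in $M$ using oracle $0^{(n)}$; the labeling carries over verbatim because the extension property of $\tilde e$ is itself formulated for $\Sigma_n$-elementary end-extensions. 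The upper bound $\Val_{\uppossible_n}(M,\Larith)=\theoryf{S5}$ is an immediate reformulation of the $\Sigma_n$-maximality principle. The main technical subtlety is the $\PA$-internal soundness invocation in $(1)\Rightarrow(2)$: it is essential to reason specifically about $\Tr_n$, since the $T$-schema for the universal $\Sigma_n$ truth predicate is precisely the feature that lets the contradiction be extracted from a purely external model $N'$ not required to extend $M$.
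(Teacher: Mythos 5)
Your $(2)\Rightarrow(1)$ direction and the concluding computation of the validities track the paper's argument closely. The gap is in $(1)\Rightarrow(2)$, at the step establishing that $M\satisfies\Con(\Tr_n+\PAp_k+\tau)$ for every standard $k$. You invoke ``the soundness of the witnessed formal proof inside $N'$,'' supported by the reflection scheme $\phi\in\Tr_n\Rightarrow\phi$. But the proof witnessing $N'\satisfies\neg\Con(\Tr_n+\PAp_k+\tau)$ is in general a nonstandard number, and the $\Tr_n^{N'}$-axioms it cites may themselves be nonstandard $\Sigma_n$ sentences; the scheme $\phi\in\Tr_n\Rightarrow\phi$ is a theorem \emph{scheme} ranging over standard $\phi$ only and does not deliver a single uniform soundness statement inside $N'$ that covers nonstandard axioms and nonstandard proofs. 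As written, that inference does not go through: a nonstandard derivation of $\neg\tau$ from satisfied axioms need not make $\neg\tau$ true in $N'$.

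The paper closes this gap by a different move, the same one that proves $(3\to 4)$ of Lemma~\ref{Lemma.Possibility-characterization}: apply Mostowski's reflection theorem (Theorem~\ref{Theorem.Mostowski-reflection-theorem}), $\PA\proves\Con(\Tr_m)$, inside $N'$ for a sufficiently large standard $m$. Because $k$, $n$ and $\tau$ are all standard, every axiom of $\Tr_n^{N'}\cup\PAp_k\cup\{\tau\}$ is a $\Sigma_m$ sentence true in $N'$, hence lies in $\Tr_m^{N'}$, and so $N'\satisfies\Con(\Tr_m)$ yields $N'\satisfies\Con(\Tr_n+\PAp_k+\tau)$ directly, with no soundness reasoning about the nonstandard proof needed. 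The $\Sigma_{n+1}$ assertion $\neg\Con(\Tr_n+\PAp_k+\tau)$ would then fail in $N'$, while being true in $M$ and hence in every model of $\PAp$ plus the $\Sigma_{n+1}$ theory of $M$, a contradiction. This is exactly the content the paper compresses into its citation of ``the $\Sigma_n$-elementary analogue of Lemma~\ref{Lemma.Possibility-characterization}.'' With Mostowski's theorem substituted for the T-schema at this one point, the rest of your proof---including the railyard labeling via the oracle algorithm $\tilde e$ of Theorem~\ref{Theorem.Universal-algorithm-Sigma-n} for the lower bound---goes through as you describe.
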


\begin{proof}
Suppose a model of arithmetic $M$ satisfies a maximal $\Sigma_{n+1}$ theory over $\PAp$. If $M\satisfies\uppossible_n\upnecessary_n\sigma$, then there is a $\Sigma_n$-elementary end-extension $N\satisfies\PAp$ satisfying $\upnecessary_n\sigma$, which means that $N\satisfies\neg\Con(\Tr_n+\PAp_k+\neg\sigma)$ for some standard finite $k$ by the $\Sigma_n$-elementary analogue of lemma~\ref{Lemma.Necessity-characterization}. This is a $\Sigma_{n+1}$ assertion that is true in $N$, where all the $\Sigma_{n+1}$ assertions true in $M$ remain true. By maximality, therefore, this inconsistency statement must already be true in $M$. This implies $\sigma$ is $\upnecessary_n$-necessary over $M$ and in particular $M\satisfies\sigma$, verifying this instance of the $\Sigma_n$ maximality principle.

Conversely, if $M$ satisfies the arithmetic $\Sigma_n$ maximality principle, then let me show that the $\Sigma_{n+1}$ theory of $M$ is a maximal extension of $\PAp$. Suppose that $\sigma$ is a $\Sigma_{n+1}$ assertion that is consistent with $\PAp$ plus the $\Sigma_{n+1}$ theory of $M$. By the $\Sigma_n$-elementary analogue of lemma~\ref{Lemma.Possibility-characterization}, this implies $M\satisfies\uppossible_n\sigma$, and consequently $M\satisfies\uppossible_n\upnecessary_n\sigma$, since $\Sigma_{n+1}$ assertions, once true in a $\Sigma_n$-elementary extension, are necessarily true in all $\Sigma_n$-elementary extensions. By the arithmetic $\Sigma_n$ maximality principle, therefore, $M\satisfies\sigma$, and so its $\Sigma_{n+1}$ theory is maximal.

For the final statement of the theorem, note that we have already established that $\Val_{\uppossible_n}(M,\Larith(M))=\theoryf{S4}$ for every model of arithmetic, and the arithmetic $\Sigma_n$ maximality principle in $M$ amounts precisely to $\Val_{\uppossible_n}(M,\Larith)=\theoryf{S5}$.
\end{proof}

\bigskip\goodbreak\subsection{Conservative and saturated extensions}

Let me briefly introduce and analyze the extension modalities $\consuppossible$ and $\soliduppossible$ arising from conservative end-extensions and computably saturated end-extensions, respectively.
\begin{align*}
  &M\satisfies\consuppossible\varphi&\Iff &\quad\varphi\text{ holds in a conservative end-extension of }M,&\\
  &M\satisfies\consupnecessary\varphi&\Iff &\quad\varphi\text{ holds in all conservative end-extensions of }M,&\\
  &M\satisfies\soliduppossible\varphi&\Iff &\quad\varphi\text{ holds in some computably saturated end-extension of }M&\\
  &M\satisfies\solidupnecessary\varphi&\Iff&\quad\varphi\text{ holds in all computably saturated end-extensions of }M.&
\end{align*}
An extension $M\of N$ is \emph{conservative}, if whenever $A\of N$ is definable in $N$ from parameters in $N$, then $A\intersect M$ is definable in $M$ using parameters in $M$. A model $M$ is \emph{computably saturated} (also known formerly as \emph{recursively saturated}), if every computable $1$-type over $M$, with finitely many parameters from $M$, which is consistent with the elementary diagram of $M$, is realized in $M$. We shall usually consider $\soliduppossible$ and $\solidupnecessary$ only when $M$ itself is computably saturated (or the standard model), since otherwise they trivialize. As a memory aid for the symbols, imagine that the conservative end-extension symbol $\consuppossible$ resembles a collared white shirt and tie---conservative attire---and the shaded-in part of $\soliduppossible$ suggests that all computable types over the model are filled-in or realized.

The key observation to make concerning these new modalities is that many of the arguments of the earlier sections of this paper, we built extensions of a model of arithmetic $M$ by taking the Henkin model of a certain theory that was definable inside $M$. All such extensions $N$ arising by this means will be conservative, because any class that is definable in $N$ is a class that $M$ has access to in virtue of its having the entire elementary diagram of $N$. In particular, the universal algorithm result of theorem~\ref{Theorem.Universal-algorithm} works with conservative extensions, and this will enable the key arguments of the potentialist analysis to go through.

\goodbreak
\begin{theorem}For the conservative end-extension modality, with operators $\consuppossible$, $\consupnecessary$:
 \begin{enumerate}
   \item $\consuppossible\varphi(a)$ is equivalent to $\uppossible\varphi(a)$, to $\xpossible\varphi(a)$ and to the other statements of lemma~\ref{Lemma.Possibility-characterization}, for arithmetic assertions $\varphi$.
   \item $\consupnecessary\varphi(a)$ is equivalent to $\upnecessary\varphi(a)$, to $\xnecessary\varphi(a)$ and to the other statements of lemma~\ref{Lemma.Necessity-characterization}, for arithmetic assertions $\varphi$.
   \item The arithmetic maximality principal $\uppossible\upnecessary\sigma\to\sigma$ is equivalently formulated with conservative end-extensions as $\consuppossible\consupnecessary\sigma\to\sigma$.
   \item The universal algorithm of theorem~\ref{Theorem.Universal-algorithm} realizes its extension property with respect to conservative end-extensions.
   \item For any model of arithmetic $M\satisfies\PAp$, the potentialist validities of the conservative end-extension modality $\consuppossible$ obey
       $$\theoryf{S4}=\Val_{\consuppossible}(M,\Larith(M))=\Val_{\consuppossible}(M,\Larith(u))\of\Val_{\consuppossible}(M,\Larith)\of\theoryf{S5}.$$
   \item These bounds on $\Val_{\consuppossible}(M,\Larith)$ are sharp, in the sense that $\Val_{\consuppossible}(M,\Larith)=\theoryf{S4}$ in any model of arithmetic $M\satisfies\PAp$ in which the universal finite sequence is empty or has standard finite length, and $\Val_{\consuppossible}(M,\Larith)=\theoryf{S5}$ in any model $M$ of the arithmetic maximality principle.
 \end{enumerate}
\end{theorem}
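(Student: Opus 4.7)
The plan is to leverage the observation, flagged in the paragraph preceding the theorem, that every extension produced by forming a Henkin model of an internally definable theory is automatically a conservative end-extension. In detail, if $N$ is the Henkin model of a theory $T$ that is definable in $M$, then the domain of $N$ consists of equivalence classes of Henkin terms coded in $M$, and the entire elementary diagram of $N$ is an $M$-definable predicate on these terms. Hence for any formula $\varphi(x,\vec y)$ and parameters $\vec b\in N$ represented by Henkin terms $\vec s\in M$, the set $\{m\in M\mid N\satisfies\varphi(m,\vec b)\}$ is $M$-definable from the parameters $\vec s$, so every $N$-definable set meets $M$ in an $M$-definable set. This is the main technical obstacle, or rather the main bookkeeping point: conceptually simple, but requiring care to track that the Henkin coding uses only parameters from $M$.

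With this key fact in hand, statements (1)--(4) fall out by inspection of earlier proofs. Conservative end-extensions are end-extensions, giving $\consuppossible\varphi(a)\to\uppossible\varphi(a)$ trivially; the converse is witnessed by the Henkin model constructed in step $(4\to 1)$ of the proof of lemma~\ref{Lemma.Possibility-characterization}, which is conservative by the key fact. Statement (2) is the dual, via the duality axiom. Statement (3) concerns only arithmetic sentences and so is immediate from (1) and (2) applied to $\sigma$ and its negation. Statement (4) follows because the extension produced in statement (2) of theorem~\ref{Theorem.Universal-algorithm} is again a Henkin model built inside $M$ from an internally definable theory, and hence conservative.

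For statement (5), the lower bound $\theoryf{S4}\of\Val_{\consuppossible}(M,\Larith(u))$ holds because the conservative end-extension relation is reflexive and transitive, and so validates $\theoryf{S4}$ in any world as in theorem~\ref{Theorem.Pre-tree-labels-give-S4}. For the upper bound, the railyard labeling $t\mapsto r_t$ of an arbitrary finite pre-tree $T$ constructed in the proofs of theorems~\ref{Theorem.Arithmetic-potentialism-is-S4} and~\ref{Theorem.Arithmetic-potentialism-with-parameters} consists of arithmetic assertions about the universal algorithm referring to the parameter $u$, the length of the universal sequence in $M$; by (4), its defining property that a world satisfying $r_t$ also satisfies $\uppossible r_s$ precisely when $t\leq s$ in $T$ transfers verbatim from $\uppossible$ to $\consuppossible$. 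Theorem~\ref{Theorem.Pre-tree-labels-give-S4} then pins the validities to $\theoryf{S4}$. Statement (6) is then immediate: when the universal sequence in $M$ is empty or of standard finite length, the parameter $u$ is absolutely definable so the validities coincide for $\Larith$ and $\Larith(u)$, yielding $\Val_{\consuppossible}(M,\Larith)=\theoryf{S4}$; and when $M$ models the arithmetic maximality principle, (3) gives $\consuppossible\consupnecessary\sigma\to\sigma$ for every arithmetic sentence $\sigma$, which together with the $\theoryf{S4}$ validity pushes the sentential validities up to $\theoryf{S5}$.
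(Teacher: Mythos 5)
Your proof is correct and takes essentially the same approach as the paper's: the central observation is that Henkin models of theories internally definable in $M$ yield conservative end-extensions of $M$, after which the constructions of lemma~\ref{Lemma.Possibility-characterization}, theorem~\ref{Theorem.Universal-algorithm}, and the railyard-labeling arguments of theorems~\ref{Theorem.Arithmetic-potentialism-is-S4} and~\ref{Theorem.Arithmetic-potentialism-with-parameters} transfer by inspection. One small imprecision worth noting on statement (3): the equivalence of $\uppossible\upnecessary\sigma$ with $\consuppossible\consupnecessary\sigma$ is not literally immediate from (1) and (2), since the inner assertion $\upnecessary\sigma$ is not itself arithmetic and so (1) cannot be applied to it directly; rather, one first uses the lemma~\ref{Lemma.Necessity-characterization} characterization to replace $\upnecessary\sigma$ by its $\Sigma_1$ witness $\neg\Con(\PAp_k+\neg\sigma)$ for some standard $k$, applies (1) to that arithmetic statement, and then transfers back---which is exactly the argument the paper invokes by citing theorem~\ref{Theorem.Possibly-necessary-characterization}.
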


\begin{proof}
(1) Clearly $\consuppossible\varphi(a)$ implies $\uppossible\varphi(a)$, because a conservative end-extension is an end-extension. Conversely, if $M\satisfies\uppossible\varphi(a)$, then by the proof of lemma~\ref{Lemma.Possibility-characterization}, one can realize $\varphi(a)$ in an end-extension realized as the Henkin model of a theory definable in $M$, and such an extension will be conservative over $M$. So $M\satisfies\consuppossible\varphi(a)$.

(2) This follows by duality from statement (1).

(3) This follows by the same argument as in theorem~\ref{Theorem.Possibly-necessary-characterization}.

(4) The extension property of the universal algorithm was realized in the Henkin model of definable theory in the model, which is therefore a conservative extension.

(5) This follows by the same argument as in theorems~\ref{Theorem.Arithmetic-potentialism-is-S4} and~\ref{Theorem.Arithmetic-potentialism-with-parameters}, using the fact that the universal algorithm works with conservative extensions. The parameter $u$ here is the length of the universal finite sequence in $M$.

(6) This follows by the same argument as in theorem~\ref{Theorem.Maximal-Sigma1-extension}.
\end{proof}

Regarding the computably saturated end-extension modality $\soliduppossible$, the key observation is that if a model of arithmetic $M$ is computably saturated, then the models $N$ arising as Henkin models for theories in $M$ will also be computably saturated, since their elementary diagram is a definable class in $M$, and any model interpreted in a computably saturated model is itself computably saturated. For this reason, the analysis I have just given for conservative end-extensions $\consuppossible$ carries through almost identically for the potentialist system consisting of the computably saturated models of arithmetic under the end-extension modality $\soliduppossible$, yielding:

\begin{theorem}Concerning the computably saturated end-extension modality, with operators $\soliduppossible$ and $\solidupnecessary$:
 \begin{enumerate}
   \item $\soliduppossible\varphi(a)$ is equivalent, in a computably saturated model, to $\uppossible\varphi(a)$, to $\xpossible\varphi(a)$ and to the other statements of lemma~\ref{Lemma.Possibility-characterization}, for arithmetic assertions $\varphi$.
   \item $\solidupnecessary\varphi(a)$ is equivalent, in a computably saturated model, to $\upnecessary\varphi(a)$, to $\xnecessary\varphi(a)$ and to the other statements of lemma~\ref{Lemma.Necessity-characterization}, for arithmetic assertions $\varphi$.
   \item The arithmetic maximality principal $\uppossible\upnecessary\sigma\to\sigma$ is equivalently formulated, in computably saturated models, with computably saturated end-extensions as $\soliduppossible\solidupnecessary\sigma\to\sigma$.
   \item The universal algorithm of theorem~\ref{Theorem.Universal-algorithm} realizes its extension property, in computably saturated models, with computably saturated end-extensions.
   \item For any computably saturated model of arithmetic $M\satisfies\PAp$, the potentialist validities of the computably saturated end-extension modality $\soliduppossible$ obey
       $$\theoryf{S4}=\Val_{\soliduppossible}(M,\Larith(M))=\Val_{\soliduppossible}(M,\Larith(u))\of\Val_{\soliduppossible}(M,\Larith)\of\theoryf{S5}.$$
   \item These bounds on $\Val_{\soliduppossible}(M,\Larith)$ are sharp, in the sense that $\Val_{\soliduppossible}(M,\Larith)=\theoryf{S4}$ in any computably saturated model of arithmetic $M\satisfies\PAp$ in which the universal finite sequence is empty or has standard finite length, and $\Val_{\soliduppossible}(M,\Larith)=\theoryf{S5}$ in any computably saturated model $M$ of the arithmetic maximality principle.
 \end{enumerate}
\end{theorem}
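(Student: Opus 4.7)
The plan is to leverage a single structural fact that makes the preceding conservative end-extension analysis port almost verbatim to the $\soliduppossible$ setting: any Henkin model built inside a computably saturated model $M \satisfies \PAp$ from a theory that is definable (possibly with parameters) in $M$ is itself computably saturated, since it is interpretable in $M$ via a definable satisfaction predicate, and any structure interpretable in a computably saturated model inherits computable saturation (a computable type over the interpretation pulls back to a computable type over $M$). I would establish this key lemma first and then route all six statements through it.

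With the lemma in hand, statements (1)--(4) follow by direct inspection of the corresponding conservative arguments. For (1), the forward direction $\soliduppossible \Rightarrow \uppossible$ is trivial; for the reverse, I would rerun the $(4)\to(1)$ step of Lemma~\ref{Lemma.Possibility-characterization}, observing that the end-extension produced there is precisely the Henkin model inside $M$ of a nonstandard fragment $\PAp_k + \varphi(a)$, which by the key lemma is computably saturated. Statement (2) is dual to (1); statement (3) combines (1) and (2) exactly as in Theorem~\ref{Theorem.Possibly-necessary-characterization}; and statement (4) is seen by inspecting the proof of Theorem~\ref{Theorem.Universal-algorithm}, whose witnessing end-extensions are themselves built as internal Henkin models.

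For statement (5), I would replay the railyard-labeling argument of Theorems~\ref{Theorem.Arithmetic-potentialism-is-S4} and~\ref{Theorem.Arithmetic-potentialism-with-parameters}, with $u$ the length of the universal finite sequence in $M$. The labels $r_t$ remain in the language of arithmetic, and by (4) every railway move is realized by a computably saturated end-extension, giving $\Val_{\soliduppossible}(M,\Larith(u)) \of \theoryf{S4}$; the other inclusion $\theoryf{S4} \of \Val_{\soliduppossible}(M,\Larith(u))$ is automatic. The upper bound $\Val_{\soliduppossible}(M,\Larith) \of \theoryf{S5}$ comes from Theorem~\ref{Theorem.Switches-S5} applied to the Orey switches of Theorem~\ref{Theorem.Infinitely-many-independent-Orey-sentences}, whose extension property transfers to $\soliduppossible$ via (4). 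Statement (6) then drops out: when the universal sequence in $M$ is empty or of standard finite length, the parameter $u$ is absolutely definable and can be eliminated from the labels, yielding $\Val_{\soliduppossible}(M,\Larith) = \theoryf{S4}$; and in any computably saturated $M$ satisfying the arithmetic maximality principle, (3) immediately gives $\soliduppossible\solidupnecessary\sigma \to \sigma$ for every arithmetic sentence $\sigma$, forcing $\Val_{\soliduppossible}(M,\Larith) = \theoryf{S5}$.

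The main obstacle I anticipate is the boundary case in the key lemma when $M$ is the standard model $\N$, where there are no nonstandard theory fragments in which to perform the Henkin construction. There the remedy is to pass first to a computably saturated elementary end-extension of $\N$ (produced by a routine compactness plus countable-recursive-saturation argument) and then carry out the internal Henkin construction there; one must check that this preparatory step does not disturb the arithmetic assertion $\varphi(a)$ being witnessed, which it does not since $\varphi$ is preserved upward from $\N$. Once this wrinkle is handled, the proof is essentially a transcription of the conservative end-extension argument with ``conservative'' replaced throughout by ``computably saturated''.
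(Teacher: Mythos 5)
Your proof is correct and follows essentially the same route as the paper: the key lemma you identify---that Henkin models built from definable theories inside a computably saturated model $M$ are themselves computably saturated, since they are interpretable in $M$ via a definable satisfaction predicate---is exactly the paper's central observation, and the remainder is the same transcription of the conservative end-extension argument. The only superfluous step is your worry about the boundary case $M = \N$, which cannot arise here because the theorem quantifies only over computably saturated base models $M$, and $\N$ is not computably saturated.
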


\goodbreak
A similar analysis can be carried out for the mixed modalities and other modalities, below, and I invite the reader to explore these and other natural extension modalities further.\label{Page.List-of-modalities}
{\relsize{-2}\begin{align*}&M\satisfies\consxpossible\varphi&\Iff &\quad\varphi\text{ holds in some conservative extension of }M&\\
  &M\satisfies\consxnecessary\varphi&\Iff &\quad\varphi\text{ holds in all conservative extensions of }M&\\[1ex]
  &M\satisfies\solidxpossible\varphi&\Iff &\quad\varphi\text{ holds in some computably saturated extension of }M&\\
  &M\satisfies\solidxnecessary\varphi&\Iff&\quad\varphi\text{ holds in all computably saturated extensions of }M&\\[1ex]
  &M\satisfies\solidconsuppossible\varphi&\Iff &\quad\varphi\text{ holds in some c.~saturated conservative end-extension of }M&\\
  &M\satisfies\solidconsupnecessary\varphi&\Iff &\quad\varphi\text{ holds in all c.~saturated conservative end-extensions of }M&\\[1ex]
  &M\satisfies\solidconsxpossible\varphi&\Iff &\quad\varphi\text{ holds in some c.~saturated conservative extension of }M&\\
  &M\satisfies\solidconsxnecessary\varphi&\Iff&\quad\varphi\text{ holds in all c.~saturated conservative extensions of }M&\\[1ex]
  &M\satisfies\ssypossible\varphi&\Iff &\quad\varphi\text{ holds in some extension with same standard system as }M&\\
  &M\satisfies\ssynecessary\varphi&\Iff&\quad\varphi\text{ holds in all extensions with same standard system as }M&\\[1ex]
  &M\satisfies\ipossible\varphi&\Iff &\quad\varphi\text{ holds in some extension preserving a nonstandard cut of }M&\\
  &M\satisfies\inecessary\varphi&\Iff&\quad\varphi\text{ holds in all extensions preserving a nonstandard cut of }M&\\[1ex]
  &M\satisfies\solidssypossible\varphi&\Iff &\quad\varphi\text{ holds in some c.~saturated extension with same standard system}&\\
  &M\satisfies\solidssynecessary\varphi&\Iff&\quad\varphi\text{ holds in all c.~saturated extensions with same standard system}&\\[1ex]
  &M\satisfies\solidipossible\varphi&\Iff &\quad\varphi\text{ holds in some c.~saturated extension preserving a nonstandard cut}&\\
  &M\satisfies\solidinecessary\varphi&\Iff&\quad\varphi\text{ holds in all c.~saturated extensions preserving a nonstandard cut}&
\end{align*}}

Additional modalities arise by further restricting to $\Sigma_n$-elementary extensions, such as $\solidconsuppossible_3\varphi$, which holds in a model $M$ when there is a $\Sigma_3$-elementary conservative computably saturated extension $N$ in which $\varphi$ holds, or $\solidssypossible_2\varphi(a)$, which holds when there is a computably saturated $\Sigma_2$-elementary extension with the same standard system in which $\varphi(a)$ holds. We have a plethora of extension modalities here.

Basically, the situation is that almost all the analysis that I have given for the potentialist validities, the universal algorithm and the maximality principle go through in analogous form for these other extension modalities. Part of what is open is that, although these modalities tend to agree on possibility $\possible\varphi(a)$ for arithmetic assertions $\varphi$, I am not sure, as in question~\ref{Question.Possibility-equivalence-in-partial-potentialist-language}, to what extent the modalities agree on the larger potentialist language ${\possible}\Larith$. Theorem~\ref{Theorem.Modalities-are-different} shows that the validities do not generally agree on the full potentialist language $\Larith^{\possible}$, and that argument will extend to several instances for these further mixed modalities, but perhaps some of them do agree there.

It is also not clear for a given model $M$ and $A\of M$ whether the validities $\Val_{\possible}(M,\Larith(A))$ are the same for all the various modalities (assume $M$ is computably saturated when considering the computably saturated modalities). Of course, in some models they are, such as in the models of the arithmetic maximality principle, or when $A=M$, since in these cases we get \theoryf{S5} and \theoryf{S4}, respectively. But we are lacking a fully general argument that these validities are always the same for the various modalities.

\begin{question}
 Exactly to what extent are the potentialist validities of these various arithmetic extension modalities the same?
\end{question}

This is an open-ended question, which would seem to admit many partial results by considering merely some of the modalities. I invite the reader to join the project and help sort out these various matters.

Volodya Shavrukov has mentioned that his article~\cite{Shavrukov2016:Duality-non-standard-elements-and-dynamic-properties-of-re-sets}, although it does not mention modal logic explicitly, can be seen as concerned with the arithmetic potentialism of nonstandard models of true arithmetic under arbitrary extensions preserving a distinguished nonstandard element.

\section{Concluding philosophical remarks}\label{Section.Philosophical-remarks}

Let me now finally return to the philosophical issues mentioned at the outset. I take this paper to illustrate the pattern of exchange between philosophy and mathematics that I had highlighted, by which a philosophical idea inspires a mathematical analysis, which in turn raises further philosophical issues, and so on in a fruitful cycle. The philosophy of potentialism originates in antiquity in the classical dispute between actual and potential infinity, and current work spans the range from philosophy to mathematics and back again several times around. Having now explored in mathematical detail different kinds of arithmetic potentialism, validating in some cases fundamentally different modal theories, we might recognize that in order for a philosophical account of potentialism to be satisfactory, it must address how it is situated with respect to the key points of contention. 

To press the philosophical discussion further, I should like to argue more specifically that the convergent forms of potentialism are far closer to actualism than are the more radically potentialist theories---I regard convergent potentialism as implicitly actualist. The reason is that if the universe fragments of one's potentialist system are mutually coherent with one another, forming a coherent system in the sense of~\cite{HamkinsLinnebo2022:Modal-logic-of-set-theoretic-potentialism}, then there is a unique limit model to which the system is converging, and there seems to be very little at stake in the ontological dispute concerning the actual existence of this limit model. What does it matter if the potential objects that might come to exist do not yet actually exist, if the way that they will come to exist is unique and deterministic? To experience potentiality in convergent potentialism is simply to wait for the inevitable. In a convergent potentialist system, the potentialist can accurately refer to truth in the limit model, without officially having that model in his or her ontology, by means of the potentialist translation: one translates existence assertions $\exists x$ for the limit model as $\possible\exists x$ for the universe fragments (see~\cite[theorem 1]{HamkinsLinnebo2022:Modal-logic-of-set-theoretic-potentialism} and also~\cite{Linnebo:2013-PHS}; a similar idea underlies theorem~\ref{Theorem.Projective-truth-is-expressible} in this article). For convergent potentialism, therefore, it is as though the limit model actually exists, for all the purposes of speaking about what is true or false there. But it is not just about making truth and falsity assertions for the limit model; rather, with convergent potentialism, it is that the limit model has an implicit existence whose fundamental nature is determined by and definable in the potentialist system, for we can interpret the full actualist universe inside the potentialist ontology. This is the sense in which I regard convergent potentialism as implicitly actualist. In convergent potentialism, the full actual limit universe supervenes on the potentialist ontology.

A more radical form of potentialism, in contrast, arises when there is truly branching possibility, statements that could become true, but might not. For this kind of potentialism, one is living in a universe fragment, and the question of what might become true and verified in a larger universe fragment depends on precisely how the universe unfolds. Will the next number on the universal finite sequence be even or odd? Will the Rosser sentence be true or its negation? It depends on which possibilities will become actual.

Woodin~\cite{Woodin2011:A-potential-subtlety-concerning-the-distinction-between-determinism-and-nondeterminism} uses this feature explicitly to make a point about free will and determinism. Suppose I fix the universal algorithm of theorem~\ref{Theorem.Universal-algorithm} and announce that it will predict your free-will choices. Am I wrong? You freely choose a finite sequence of numbers, any sequence at all, and we run the program. If time is extended into a suitable universe, my program will correctly enumerate your sequence there, fulfilling my claim of it as a predictor, just like Newcomb's perfect predictor. And we can do it again from that point, as much as we want: you freely pick a finite extension of the sequence, and in the right extension of the universe, the predictor will again be right. Woodin's point in part is that seemingly free-will choices can be seen ultimately as deterministic, agreeing perfectly with the output of a predictor algorithm that is fixed in advance. The trick is that the metaphysical context in which that algorithm is to be run must be chosen carefully by carefully extending the universe in a suitable way so as to achieve the accurate prediction. In this way, the argument shows how discussions of free-will and determinism become wrapped up with metaphysical questions concerning the criterion by which we determine which universe it is that we take ourselves to inhabit, especially when this universe may have many distinct proper extensions.

I believe that end-extensional arithmetic potentialism can shed light on the philosophy of finitism and even ultrafinitism. To be sure, I do not expect to engage the ultrafinitist in an analysis of nonstandard models of arithmetic, since such models and even the standard model of arithmetic do not have a real mathematical existence for the ultrafinitist. Rather, what I am proposing is to use the potentialist system in order to shed light on what are the commitments of ultrafinitism, for it is often not as clear as one might hope to come to an understanding of exactly what ultrafinitism is. This is an analysis of ultrafinitism undertaken by and for the actualists, to help them understand ultrafinitism, much as one might use classical logic when analyzing or comparing the power of various systems of intuitionist logic.

I find this approach helpful because important features of the potentialist system appear to be shared with assertions that one sometimes hears from ultrafinitists. For the ultrafinitist, the universe of natural numbers starts out perfectly clear with the numbers $0$, $1$, $2$ and so on, but as time proceeds there is increasing hesitancy concerning extremely large numbers; it is as though the numbers get less definite as one proceeds; one can often describe much larger numbers with a comparatively small definition, but the number being defined would be so vast that the ultrafinitist is hesitant to agree that it actually exists. Thus, the ultrafinitist appears to have something like an initial segment of the universe of natural numbers, a realm of feasibility. Perhaps this realm is even closed under successor, or perhaps not, but many ultrafinitists are reluctant to assert that there is a largest natural number, and it is because of this kind of issue that it is often difficult to say exactly what it is that the ultrafinitist holds. (See further discussion in \cite{Hamkins2025:Potentialist-conception-of-ultrafinitism}.)

My idea is that every model of arithmetic $M$ can be seen as providing an ultrafinitistic context, a realm of feasibility, with respect to its end-extensions $N$, in that the objects of $M$ are smaller in a very robust way than the additional objects of $N$, yet still they obey in $M$ the attractive and familiar mathematical properties. In particular, since $M$ is closed under successor and the other elementary arithmetic operations (but definitely not necessarily under all arithmetically definable operations in $N$), what we gain is a robust, coherent and mathematically precise way of understanding the nature of ultrafinitist worlds. On this account, the finitist and ultrafinitist perspective is that the full entirety of the natural numbers is so vast that it has these difficult-to-describe cuts corresponding to partial universe fragments, which are closed under successor and much more, but which can be viewed as a lower realm of feasibility. Thus, one doesn't view the nonstandard models as enlargements of the ``standard'' model, but rather one views them as ultrafinitist approximations of the full model of natural numbers yet to be constructed on top.

To be clear, the view is not that the finitist or the ultrafinitist are speaking about nonstandard models of \PA, which of course they are not. Rather, the view is that the situation arising from the potentialist account of these models seems to exhibit many of the features concerning arithmetic and the nature of arithmetic truth and number existence assertions that the finitist and the ultrafinitist seem to describe, and for this reason by studying the nature of the potentialist system, we might come to a better understanding of those views.

Thus, I propose to view the philosophy of ultrafinitism in modal terms as a form of potentialism. For this kind of ultrafinitism, we would have a hierarchy of realms of feasibility, about which we could make modal assertions connecting them. Further, this picture leads directly to the question of branching possibility in ultrafinitism, and we would thus seem to have distinct varieties of ultrafinitism, depending on how one answers. Is it part of the ultrafinitist ontology or not that the nature of the numbers, as we produce more and more of them, are determinate with linear inevitability? Or might we discover different arithmetic truths if the numbers are revealed differently? Linear inevitability ultrafinitism would have modal validities \theoryf{S4.3}, but would, as I argued above, be much closer to non-ultrafinitist positions concerning the limit model. Radical branching ultrafinitism, in contrast, would exhibit true branching possibility as the realms of feasibility unfold, validating only~\theoryf{S4}.

The potentialist version of ultrafinitism need not necessarily be arithmetic end-extensional potentialism, for one can imagine that the ultrafinitist realms of feasibility do not necessarily constitute an initial segment of the larger realms. For example, it is conceivable that a modal ultrafinitist could hold that $2^{2^{100}}$ comes into existence at an earlier stage of feasibility than some of the numbers smaller than it, simply because it is easier to describe this number---it has low Kolmogorov complexity---than some of the numbers smaller than it, which can have enormous complexity or an effectively random nature, say, for their digits.

Turning now to another philosophical topic, logically inclined mathematicians sometimes inquire: if a theory is inconsistent, but there is no short proof of a contradiction from the theory, can one still rely on it when using only short proofs? The potentialist framework provides an interesting take on this, for one can imagine that a theory is consistent in a universe fragment, but potentially inconsistent only in a larger fragment. Indeed, this phenomenon is fundamental to the proof of the universal algorithm, since if $n$ is the last successful stage of that algorithm, then $\PA_{k_n-1}$ is consistent in that model, but it can become inconsistent in an end-extension. The smaller world can usefully build a model of a theory, even though the larger model cannot really do this sensibly, when the theory becomes inconsistent. In this sense, the smaller universe fragments actually have access to a broader number of possible worlds; they haven't yet been closed off by selecting a particular universe extension. Such a perspective allows one to look upon paraconsistency through a potentialist lens, which provides in effect a hierarchy of realms of consistency and thereby controls logical explosion.

Finally, let me briefly discuss how arithmetic potentialism relates to the interpretation of Maddy's \textsc{maximize} principle~\cite{Maddy1997:NaturalismInMathematics} in arithmetic foundations. Maddy directs us by her maxim to adopt foundational theories that maximize the range of mathematical possibility. She speaks of maximizing the space of available isomorphism types, and a naive interpretation of that would seem to lead us always to prefer end-extending the model of arithmetic, since every extension realizes additional isomorphism types, which are not available in the previous model. No model can be fully maximal in this sense, however, since every model has proper end-extensions. Let me propose an alternative natural interpretation of the maximize imperative, however, which is that we should maximize the collection of sentences that are necessarily true. This would be true in the models of the arithmetic maximality principle, where every sentence $\sigma$ that could become necessarily true is already necessarily true. By theorem~\ref{Theorem.Maximal-Sigma1-extension}, these are precisely the models exhibiting a maximal $\Sigma_1$ arithmetic theory---we would be maximizing the true existence assertions in arithmetic, which is surely a way of fulfilling her idea about maximizing isomorphism types. The problem for Maddy, however, is that all such theories necessarily make numerous inconsistency assertions. In particular, they all think $\neg\Con(\PA)$ and much more. Since they have maximized the collection of sentences $\sigma$ for which $\necessary\sigma$ holds, they have also maximized the collection of true sentences $\sigma$ for which $\PA_k+\neg\sigma$ is inconsistent. But Maddy doesn't usually seem to take her \textsc{maximize} principle to compel one towards holding that most mathematical theories are inconsistent.

Maddy, of course, is implementing her principle in set theory, rather than arithmetic, and she proposes it as a principled way to justify various large cardinal axioms. The point I make here is that adding new large cardinal axioms in set theory leads usually to the \emph{negation} of $\Sigma^0_1$ statements, since they cause consistency statements to become true rather than inconsistency statements. So perhaps the large-cardinal set-theorist reply is that we should be \emph{minimizing} the $\Sigma^0_1$ theory, or in other words, maximizing the $\Pi^0_1$ theory. This makes sense if one thinks that having a shorter model of arithmetic is better: it is closer to being standard. The second-order induction axiom, after all, asserts that the natural numbers are minimal with respect to containing $0$ and being closed under successor.

\printbibliography

\end{document}